\documentclass[amscd,amssymb,verbatim,11pt]{amsart}
\usepackage[fontsize = 10bp]{fontsize}
\usepackage{scalerel}
\usepackage{stackengine,wasysym}
\usepackage{bbm}

\usepackage{amssymb,latexsym, amsmath, amscd, array, graphicx, pb-diagram}
\usepackage{hyperref}
\usepackage{color}

\usepackage{tikz-cd}

\usepackage{color}
\hypersetup{
    colorlinks=true,
    linkcolor=red,
    urlcolor=blue,
    citecolor=blue
           }

\usepackage{amsthm}
\usepackage{hyperref}
\usepackage[all]{xy}
\usepackage[scr]{rsfso}

\swapnumbers

\theoremstyle{plain}

\newtheorem{thm}{Theorem}[section]
\newtheorem{theorem}[thm]{Theorem}

\newtheorem{lemma}[thm]{Lemma}

\newtheorem{proposition}[thm]{Proposition}
\newtheorem{corollary}[thm]{Corollary}

\theoremstyle{definition}

\newtheorem{definition}[thm]{Definition}

\newtheorem{remark}[thm]{Remark}

\newtheorem{ex}[thm]{Example}

\newtheorem{question}[thm]{Question}


 \newcommand{\Wi}{\widetilde}

\DeclareMathOperator{\cat}{{\mathsf{cat}}}
\DeclareMathOperator{\TC}{{\mathsf{TC}}}
\DeclareMathOperator{\secat}{{\mathsf{secat}}}
\DeclareMathOperator{\dsecat}{{\mathsf{dsecat}}}

\DeclareMathOperator{\dcat}{{\mathsf{dcat}}}
\DeclareMathOperator{\dTC}{{\mathsf{dTC}}}

\DeclareMathOperator{\cd}{{\mathsf{cd}}}

\DeclareMathOperator{\Ker}{{\rm Ker}}

\newcommand{\D}{\mathcal{D}}
\DeclareMathOperator{\hdim}{{\mathsf{h-dim}}}
  \newcommand{\F}{\mathcal{F}}

\usepackage{enumitem}

\def\Im{\protect\operatorname{Im}}

\makeatletter
\def\@tocline#1#2#3#4#5#6#7{\relax
  \ifnum #1>\c@tocdepth 
  \else
    \par \addpenalty\@secpenalty\addvspace{#2}%
    \begingroup \hyphenpenalty\@M
    \@ifempty{#4}{%
      \@tempdima\csname r@tocindent\number#1\endcsname\relax
    }{%
      \@tempdima#4\relax
    }%
    \parindent\z@ \leftskip#3\relax \advance\leftskip\@tempdima\relax
    \rightskip\@pnumwidth plus4em \parfillskip-\@pnumwidth
    #5\leavevmode\hskip-\@tempdima
      \ifcase #1
       \or\or \hskip 1em \or \hskip 2em \else \hskip 3em \fi%
      #6\nobreak\relax
    \hfill\hbox to\@pnumwidth{\@tocpagenum{#7}}\par
    \nobreak
    \endgroup
  \fi}
\makeatother




\def\scr{\mathcal}
\def\A{{\scr A}}
\def\B{{\scr B}}
\def\O{{\scr O}}
\def\C{{\mathbb C}}
\def\Z{{\mathbb Z}}
\def\Q{{\mathbb Q}}
\def\R{{\mathbb R}}

\def\rpn{{\R P^n}}

\def\1{\hbox{\rm\rlap {1}\hskip.03in{\rom I}}}
\def\Bbbone{{\rm1\mathchoice{\kern-0.25em}{\kern-0.25em}
{\kern-0.2em}{\kern-0.2em}I}}


\def\wt{\widetilde}

\usepackage{comment}

\long\def\forget#1\forgotten{} %

\newcommand\ver[1]{\marginpar{\tiny Changed in Ver \VER}}

\date{\today}

\begin{document}

\begin{abstract} 
We develop the theory of probabilistic variants of the one-category and diagonal topological complexity, which bound the classical LS-category and topological complexity from below. Unlike any other classical or probabilistic invariants, these invariants are rigid on spaces with finite fundamental group. On Eilenberg--Mac~Lane spaces, we identify these new invariants with distributional category and complexity, respectively, and use them to illuminate aspects of the behavior of the latter invariants on aspherical spaces and products of spaces. We also study their properties on covering maps, $\pi_1$-isomorphisms, $H$-spaces, and closed essential manifolds, and consequently, obtain the first examples of closed manifolds beyond the real projective spaces on which the distributional theory disagrees with the classical one. 
\end{abstract}





\title[Distributional 1-Category and diagonal distributional complexity]{On distributional one-category, 
\\
diagonal distributional complexity, 
\\
and related invariants}

\author[E.~Jauhari]{Ekansh~Jauhari}

\author[J.~Oprea]{John~Oprea}

\address{Ekansh Jauhari, Department of Mathematics, University of Florida, 358 Little Hall, Gainesville, FL 32611, USA.}

\email{ekanshjauhari@ufl.edu}

\address{John Oprea, Department of Mathematics, Cleveland State University, Cleveland, OH 44115, USA.}

\email{jfoprea@gmail.com}

\subjclass[2020]
{Primary 55M30, 
57N65, 
Secondary 60B05, 
55R05, 
57M10. 
}

\keywords{LS-category, topological complexity, essential manifolds, 
distributional category, distributional complexity.}

\maketitle
\tableofcontents

\section{Introduction}\label{sec:intro}
The subject of Lusternik--Schnirelmann (LS) category goes back almost 100 years.
The original idea was to understand the complexity of a space (such as a manifold)
by determining the least number of simple pieces that were required to 
assemble it. This number (or nowadays one less than this number) is then
an invariant of (the homotopy type of) the space and serves the usual algebraic
topological function of distinguishing homotopically different spaces. Of course,
Lusternik and Schnirelmann defined category in order to bound the number
of critical points of any smooth function on a manifold from below, and this was essential for their proof of the fact that the topological sphere with any metric
possesses at least 3 closed geodesics. Fox is often credited with bringing
LS-category into the realm of algebraic topology and, in fact, here we will
use a variant of his so-called \emph{one-category}. From the description above,
we see that LS-category is a ``molecular theory'' in the best reductionist 
tradition of classical $19$-th century science --- see~\cite{CLOT} for a modern
approach to LS-category. In the last 20 years or so,
a distinct, but intertwined, theory of \emph{topological complexity} has been
developed by Michael Farber~\cite{Fa} and (now) a host of others. This theory has its
roots in understanding how difficult the motion planning problem in robotics
is for various configuration spaces. This, too, is a molecular theory. Even from 
the vague descriptions we have just given, it should be apparent that
calculating LS-category and topological complexity is a very hard thing 
to do for most topological spaces, so any approximating invariants we can find for them prove to be useful (and sometimes essential).

Recently, some new homotopy invariants of Lusternik--Schnirelmann type were introduced in~\cite{DJ,KW}. 
They are called \emph{distributional category} in~\cite{DJ} and \emph{analog category} in~\cite{KW},
and they provide new lower bounds for LS-category. Both approaches define invariants in terms 
of probability measures on spaces, but different topologies are used in the definitions. 
In many cases, it has now been shown that the two notions agree. Here, we shall always refer to 
distributional category and use the notation $\dcat(X)$ for the distributional category of a space $X$, but we will also refer to~\cite{KW} for various results when they apply. 
Distributional category considers probability measures on the based path
space $P_0(X)$ which lie in a single fiber of the evaluation map to $X$. Of course, 
the fiber of this map is the loop space $\Omega X$, and a probability measure here is a relatively complicated thing. Especially, in the original formulation of 
distributional category (and topological complexity)~\cite{DJ,Ja1}, we see a collection of
dynamical states (i.e., paths) with associated probabilities (adding to one, of course) 
which all collapse to a designated final state, and without carrying the analogy
too far (we hope), this is reminiscent of the Feynman path-integral approach to
Quantum Mechanics. Certainly, we have gone too far here, but the idea of a kind
of superposition of categorical states is not too outlandish. Furthermore, 
at present, there does not seem to be a way of molecularizing the theory by
cutting up a given space into ``atoms''. In this sense, the theories of distributional category
and distributional topological complexity are holistic theories rather than reductionist ones.

In this paper, we go a step further by ``discretizing" in a well-defined sense. 
For a space $X$, we define a new invariant called the \emph{distributional one-category}, 
denoted $\dcat_1(X)$, that instead considers
probability measures on the universal cover $q\colon \widetilde X \to X$ that 
lie in a single fiber of $q$. Of course, the advantage here is that the fiber of a covering is discrete, and this 
entails all sorts of simplifying factors. We prove
that $\dcat_1$ is a lower bound for $\dcat$ and, in various cases, is equal 
to $\dcat$. Because we work with 
discrete fibers, we can be rather explicit in our proofs, and we hope this 
makes the subject more transparent.

As we show in this paper, $\dcat_1$ turns out to be a very useful tool in understanding other distributional invariants. For instance, $\dcat_1$ helps us obtain new examples of manifolds (namely, some Lens spaces and projective product spaces) on which the distributional invariants differ from their classical counterparts. Also, it helps disprove a product inequality for the distributional invariants; the question of the existence of such an inequality (whose analogue in the classical theory exists and is very useful for computations) has been open since the inception of these new invariants. Furthermore, owing to its discrete nature, $\dcat_1$ offers relatively simpler proofs to some key results in the theory of distributional category of aspherical spaces. These features of $\dcat_1$ provide motivation to develop further the theory of distributional one-category and related complexity-type invariants.

\subsection*{Structure of the paper}This paper is organized as follows. After some background material in Section~\ref{sec:prelim}, we formally define distributional one-category ($\dcat_1$) in Section~\ref{sec:dcat1} and elucidate its essential
properties. In particular, we show that for a covering map $f\colon Y \to X$, we have
$\dcat_1(Y) \leq \dcat_1(X)$. In addition, we show that for a discrete group 
$\pi$, we have $\dcat_1(\pi) = \dcat(\pi)$, where $\dcat$ is the previously defined \emph{distributional category} of~\cite{DJ}. This then allows us to analyze $\dcat(\pi)$
from a simpler viewpoint. 

In Section~\ref{sec: relationship with pi1}, we describe cases where $\dcat_1(X)
= \dcat(K(\pi_1(X),1))$, and look at possibilities for $\dcat_1(X)$ and $\dcat_1(\pi_1(X))$ to be arbitrarily far apart. Moreover, we extend a result of~\cite{KW}
by showing that if $\pi_1(X)$ is finite for a space $X$, then $\dcat_1(X) \leq
|\pi_1(X)| - 1$. The powerful influence of the fundamental group on $\dcat_1$ is also
displayed by the fact that $\dcat_1(X) = 2$ for any space $X$ whose fundamental 
group has a single relator (that is not a proper power, of course). 

In Section~\ref{dcat1 of groups}, we use $\dcat_1$ to revisit results about $\dcat(\pi)$ for $\pi$ either a finite group or a torsion-free discrete group. In the latter case, we show an Eilenberg--Ganea-type equality $\dcat_1(\pi)=\cd(\pi)$ in the spirit of~\cite{KW}. While these results are
known, we believe that the $\dcat_1$-approach offers an enlightening perspective. 

In Section~\ref{sec:essential}, we study $\dcat_1$ of essential manifolds~\cite{Gr} and explore cases where $\dcat_1(M)=\dim(M)$ for closed manifolds $M$ (of course, it is not always true). This allows us to extend the above equality beyond the realm of aspherical manifolds. As an application, we compute the distributional one-category of closed $3$-manifolds having torsion-free fundamental group. This section recovers similar results on $\dcat$ of some essential manifolds from~\cite{Ja2}. 

Section~\ref{sec: diag distrib TC} introduces the \emph{sequential distributional diagonal complexity} $\dTC^\D_m(X)$ of a space $X$, an invariant which is intimately related to $\dcat_1$ and serves as a lower bound to the \emph{sequential distributional complexity} $\dTC_m(X)$ of $X$ of~\cite{Ja1}. We establish various properties of $\dTC^\D_m(X)$ and analyze the relationship between the diagonal distributional complexity of a space and its discrete fundamental group. As a bonus, we also show that a product inequality \emph{does not hold} for distributional invariants. This was not 
seen directly using ordinary distributional 
invariants, but is amenable to our techniques. 

We end this paper in Section~\ref{sec:an inequality} by giving an upper bound on $\dcat(X)$ in terms of $\dcat_1(X)$ and $\dcat(\widetilde X)$. This allows us to give the \emph{first examples} of finite-dimensional CW complexes beyond $\rpn$ (where $\dcat(\rpn)
= 1 < n = \cat(\rpn)$ for all $n\ge 2$) with $\dcat \not = \cat$ and $\dTC_m\ne \TC_m$, where $\TC_m$ is the classical $m$-th sequential topological complexity,~\cite{Fa,Ru2}. In some sense, then, this solution of a formerly intractable problem justifies our approach. 

\subsection*{Notations and conventions} All topological spaces considered in this paper are connected CW complexes. We use the term \emph{maps} for continuous functions. The symbol ``$\cong$" is used to denote homeomorphisms, isomorphisms, and bijections, and the symbol ``$\simeq$" is used to denote homotopy equivalences of maps and spaces. 

\section{Preliminaries}\label{sec:prelim}
This section contains some facts about (distributional) sectional category.
\subsection{Sectional category}\label{subsec:secat section}
Suppose $f:E\to B$ is a (Hurewicz) fibration. For each $n \ge 1$, the iterated join of $n$ copies of $E$ along $f$ is
\[
\ast^n E = \left\{\sum_{i=1}^{n} \lambda_{i} e_{i} \hspace{1mm} \middle| \hspace{1mm} e_{i} \in E, \hspace{0.5mm} \sum_{i=1}^{n}\lambda_{i} = 1, \hspace{0.5mm} \lambda_{i} \geq 0, \hspace{0.5mm} f(e_{i})= f(e_{j})\right\},
\]
and the iterated fibration $\ast^nf : \ast^nE \to B$ is defined as
\[
\ast^n f \left(\sum_{i=1}^{n} \lambda_{i} e_{i}\right) = f(e_i)
\]
for any $i$ with $\lambda_{i} > 0$. The smallest integer $n$ such that $\ast^{n+1}f$ has a section is called the \emph{sectional category} of $f$, denoted $\secat(f)$, see~\cite{Sch}. It is well-known that $\secat$ is a homotopy invariant of fibrations.

\begin{remark}\label{rmk:homotopy section}
    If $f:E\to B$ is a fibration that admits a homotopy section, then $f$ admits a true section. This follows from the homotopy lifting property of $f$, see, for example,~\cite[Exercise 1.22]{CLOT}. So, in this paper, we will use the term ``section" for both a homotopy section and a true section of a given fibration since one can be obtained from the other. Similarly, since a lift of a map $g:A\to B$ along a fibration $f$ is essentially a section of the pullback fibration $g^*f$, we will use the term ``lift" for both a homotopy lift as well as a true lift of a given fibration along a given map.
\end{remark}

If $E=P_0(X)$ is the based path space (containing paths \emph{terminating} at a fixed basepoint $x_0 \in X$) and $f$ is the evaluation-at-zero map, defined $e^X(\phi)=\phi(0)$, then the least $n$ such that $\ast^{n+1}e^X$ admits a section is called the \emph{Lusternik--Schnirelmann category} of $X$, denoted $\cat(X)$. More generally, for a map $f:Y\to X$, the least $n$ such that $f$ has a lift along $\ast^{n+1}e^X$ is called the \emph{Lusternik--Schnirelmann category} of $f$, denoted $\cat(f)$. Clearly, $\cat(X)=\cat(1_X)$. Since $\cat$ is a homotopy invariant and the
homotopy type of an Eilenberg--Mac~Lane space $K(\pi,1)$ is determined by $\pi$, we 
may use the notation $\cat(K(\pi,1)) = \cat(\pi)$.

If $E=P(X)$ is the free path space and $f$ is the fibration $\text{ev}_m^X\colon P(X)\to X^m$ defined as 
\[
\text{ev}_m^X(\phi)=\left(\phi(0),\phi\left(\frac{1}{m-1}\right),\ldots,\phi\left(\frac{m-2}{m-1}\right),\phi(1)\right),
\]
then the least $n$ such that $\ast^{n+1} \text{ev}^X_m
$ has a section is called the $m$\emph{-th sequential topological complexity} of $X$, denoted $\TC_m(X)$, see~\cite{Fa,Ru2} for details and the interpretation of this invariant in terms of robot motion planning. 

For a discrete group $\Gamma$, its \textit{cohomological dimension}, denoted $\cd(\Gamma)$, is defined to be the largest integer $k$ such that $H^k(\Gamma,A)\ne 0$ for some $\Z\Gamma$-module $A$. The Eilenberg--Ganea theorem (see~\cite[Theorem VIII.7.1]{Br}) implies $\cat(\pi)=\cd(\pi)$, so that the LS-category of discrete groups has an algebraic description. Moreover, for $\cd(\pi) > 2$, the smallest dimension of a CW complex that is a $K(\pi,1)$ (i.e., the \emph{homotopy dimension}) is $\hdim(K(\pi,1)) = \cd(\pi)$.

\subsection{Distributional sectional category}\label{sec:basic2}

Let $\B(Z)$ denote the set of probability measures on a topological space $Z$ and 
$$\B_n(Z) = \{\mu \in B(Z) \mid\, |\text{supp}(\mu)| \leq n\}$$ 
be the subspace of measures supported by at most $n$ points. So, each such measure $\mu\in\B_n(Z)$ 
can be written as a formal sum 
$$\mu = \sum_{i=1}^n a_i \delta_{z_i} = \sum_{i=1}^{n} a_i z_i,$$
where $z_i \in Z$, $\delta_{z_i}$ is the point (Dirac) measure at $z_i$, and 
$a_i \geq 0$ with $\sum_{i=1}^n a_i = 1$. We identify the space $Z$ with a 
subspace of $\B_n(Z)$ by means of Dirac measures $\delta_z$ for every $z \in Z$.
The support of $\mu$ is the subset $\text{supp}(\mu) = \{z_i\in Z\mid\, a_i > 0\}$. In the case when $Z$ is a metric space, we consider 
the L\'evy-Prokhorov metric on $\B_n(Z)$, see~\cite{Pr,DJ}. A different topology is defined 
in~\cite{KK,KW}, but for the spaces we consider, the results of~\cite{KW} also hold, since~\cite{KW2} shows that the distributional invariants agree with their analog counterparts on compact metric spaces and finite groups, among other spaces.

If $f\colon E \to B$ is a map, then for each integer $n\ge 1$, there is a space
\[
E_n = \bigcup_{b\in B}\B_n\left(f^{-1}(b)\right)=\left\{\mu \in \B_n(E)\mid\, \text{supp}(\mu) \subset f^{-1}(b)\ \text{for\ some\ } b \in B\right\},
\]
which is topologized as a subspace of $\B_n(E)$, and a map $f_n\colon E_n \to B$ defined as
\[
f_n(\mu) = b \ \text{ if } \mu\in\B_n\left(f^{-1}(b)\right).
\]

\begin{remark}[\protect{Naturality}]\label{rem:naturality}
    Suppose we have the following commutative diagram:
\[
\begin{tikzcd}
    X \arrow{r}{f}  \arrow{d}{p}
    &
    Y \arrow{d}{q}
    \\
    B \arrow{r}{g}
    &
    C.
\end{tikzcd}
\]
Of course, for any integer $n\ge 1$, we have maps $p_n\colon X_n\to B$ and $q_n\colon Y_n\to C$ as defined above. Applying the functor $\B_n$ on $f$ yields a map $\B_n(f)\colon \B_n(X)\to \B_n(Y)$. Let $f_n$ be the restriction of $\B_n(f)$ to the space $X_n\subset \B_n(X)$. It is easy to check that the image of $f_n$ lies in $Y_n\subset \B_n(Y)$, and $q_n\circ f_n=g\circ p_n$ --- see, for example, the proof of~\cite[Proposition~5.3]{Ja1}. Hence, given a map $f\colon X\to Y$ over $g\colon B\to C$, we get a natural map $f_n\colon X_n\to Y_n$ also over $g\colon B\to C$. This fact will be used frequently in our proofs.
\end{remark}

If $f\colon E\to B$ is a fibration with fiber $F$, then $f_n\colon E_n \to B$ is a fibration with fiber $\B_n(F)$, see~\cite{DJ,KW}. If $F$ is $r$-connected, then $\B_n(F)$ is $(2n+r-2)$-connected for $r\ge 1$ (see~\cite{KK}), $(n-1)$-connected for $r=0$, and $(n-2)$-connected otherwise, see~\cite[Section 3]{Dr}. The smallest integer $n$ such that $f_{n+1}\colon E_{n+1}\to B$ has a section is called the \emph{distributional sectional category} of $f$, denoted $\dsecat(f)$. If two fibrations are fiberwise homotopic, then their $\dsecat$ values coincide,~\cite{Ja1}. We refer to~\cite[Section 2.1]{Ja2} for a motivation behind this invariant. The similar invariant defined in~\cite{KW} is called 
``analog sectional category" and denoted by $\sf{{asecat}}$$(f)$. 

If $E = P_0(X)$ and $f=e^X$, then we write
$e_n^X \colon P_0(X)_n \to X$, and the least $n$ such that $e_{n+1}^X$ has a section is called the \emph{distributional category} of $X$, denoted $\dcat(X)$, see~\cite{DJ} (a similar invariant was denoted $\sf{{acat}}$$(X)$ in~\cite{KW}). If $f\colon Y \to X$
is a map, then the least $n$ such that $f$ has a lift $\widetilde f$ in
\[
\begin{tikzcd}[contains/.style = {draw=none,"\in" description,sloped}]
&
P_0(X)_{n+1} \arrow{d}{e_{n+1}^X}
\\
Y  \arrow{r}{f} \arrow{ur}{\widetilde{f}}
&
X
\end{tikzcd}
\]
is called the \emph{distributional category} of $f$, denoted $\dcat(f)$, see~\cite{Dr,Ja2}. From the proof of Lemma~\ref{lem:dsecat1} below, one can see that $\dcat(f)=\dsecat(f^*e^X)$, where $f^*e^X$ is the pullback of the evaluation fibration $e^X$ along $f$. In particular, we have that $\dcat(X)=\dcat(1_X)=\dsecat(e^X)$.
We note that this definition is equivalent to the original one in~\cite{DJ}, which fits better with the viewpoint of the Introduction.

If $E=P(X)$ is the free path space and $f=\text{ev}_m^X$ is the evaluation fibration, then the least $n$ such that $(\text{ev}^X_m)_{n+1}
$ has a section is called the $m$\emph{-th sequential distributional complexity} of $X$, denoted $\dTC_m(X)$, see~\cite{DJ,Ja1} (a similar invariant was denoted $\sf{{ATC}}$$_m(X)$ in~\cite{KW}). Therefore, $\dTC_m(X)=\dsecat(\text{ev}^X_m)$ for each $m\ge 2$.

In general, $\dsecat(p)\le\secat(p)$ for any fibration $p\colon E\to B$ whose base $B$ is paracompact, because there exists a (continuous) fiberwise map $\psi_k\colon \ast^k E\to E_k$ for each $k\ge 1$ --- see~\cite{KK,Ja1,KW2} for details about this map. Hence, in particular, $\dcat(X)\le\cat(X)$ and $\dTC_m(X)\le\TC_m(X)$ holds.

We now prove some standard properties of $\dsecat$ that will be useful later.
\begin{lemma}\label{lem:dsecat1}
    If $p:E\to B$ is a fibration and $f:A\to B$ is a map, then $\dsecat(f^*p)\le\dsecat(p)$, where $f^*p$ is the pullback of $p$ along $f$.
\end{lemma}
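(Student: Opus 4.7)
The plan is to identify the ``distributionalized'' pullback fibration $(f^*p)_{n+1}$ with the ordinary pullback of the ``distributionalized'' fibration $p_{n+1}$ along $f$, and then invoke the standard fact that a section of a fibration pulls back along any map. To start, write $f^*E = \{(a,e) \in A\times E : f(a) = p(e)\}$ with projection $f^*p(a,e) = a$, and observe that for each $a \in A$, the map $(a,e) \mapsto e$ gives a homeomorphism of fibers $(f^*p)^{-1}(a) \cong p^{-1}(f(a))$.

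The central step is to construct a homeomorphism over $A$,
$$\Phi\colon (f^*E)_{n+1} \longrightarrow f^*(E_{n+1}), \qquad \sum_i \lambda_i (a, e_i) \longmapsto \left(a,\, \sum_i \lambda_i e_i\right),$$
where the codomain is the ordinary pullback of $p_{n+1}\colon E_{n+1}\to B$ along $f$. Set-theoretically, $\Phi$ is a bijection with inverse $(a, \sum_i \lambda_i e_i) \mapsto \sum_i \lambda_i (a, e_i)$; the points $(a, e_i)$ lie in $f^*E$ since $p(e_i) = f(a)$. For continuity, the forward direction comes from pushforward of measures along the canonical map $\tilde f\colon f^*E \to E$, $(a,e)\mapsto e$, while the inverse is given by $(a, \nu) \mapsto \delta_a \otimes \nu$, which is a continuous map into $\B_{n+1}(A\times E)$ whose image lies in the subspace $(f^*E)_{n+1}$. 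Both directions use only standard continuity properties of pushforward and tensor product of probability measures, and the argument works equally in the L\'evy--Prokhorov topology and in the topology of~\cite{KW}.

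Granting the identification, assume $\dsecat(p) = n$, so that $p_{n+1}$ admits a section $s\colon B \to E_{n+1}$ (invoking \remref{rmk:homotopy section} if $s$ is only a homotopy section). The map $s'\colon A \to f^*(E_{n+1})$ defined by $s'(a) = (a, s(f(a)))$ is then a section of $f^*(p_{n+1})$, and $\Phi^{-1}\circ s'$ is a section of $(f^*p)_{n+1}$; consequently, $\dsecat(f^*p) \le n = \dsecat(p)$. The main (but still minor) obstacle will be the continuity verifications for $\Phi$ and $\Phi^{-1}$; everything else is purely formal once the pullback identification is in hand.
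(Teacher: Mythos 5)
Your proof takes essentially the same route as the paper's: both identify $(f^*E)_{n+1}$ with the ordinary pullback $f^*(E_{n+1})$ of $p_{n+1}$ along $f$, and then pull back the section $s$ along $f$. The only difference is presentational — you construct the identifying homeomorphism $\Phi$ explicitly (with the pushforward and $\delta_a\otimes\nu$ formulas) and flag the continuity checks, whereas the paper records the fiberwise homeomorphisms $(f^*p)^{-1}(a)\cong p^{-1}(f(a))$, notes that $\B_{n+1}$ preserves homeomorphisms, and then extracts the section from the universal property of the pullback square.
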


\begin{proof}
    Let $\dsecat(p)=n$ and $s:B\to E_{n+1}$ be a section of $p_{n+1}:E_{n+1}\to B$. For each $x\in A$, we have a homeomorphism between $(f^*p)^{-1}(x)$ and $p^{-1}(f(x))$.
    Since the functor $\B_{n+1}$ preserves homeomorphisms, the fibers of the fibrations $p_{n+1}$ and $(f^*p)_{n+1}:(f^*E)_{n+1}\to A$ are homeomorphic and we have the homeomorphism
    \[
    f^*E_{n+1}=\bigcup_{x\in A} \B_{n+1}\left(p^{-1}(f(x))\right)=\bigcup_{x\in A} \B_{n+1}\left((f^*p)^{-1}(x)\right)=(f^*E)_{n+1}.
    \]
    Hence, we have the following (homotopy) pullback diagram:
    \[\begin{tikzcd}
     A\arrow[ddr, bend right, "1_B"] \arrow[drr, bend left, "s\circ f"] \arrow[dr, dashed, "\tau"] \\
 &
    (f^*E)_{n+1} \arrow{r} \arrow{d}{(f^*p)_{n+1}} & E_{n+1} \arrow[d, "p_{n+1}"] \\
    & A \arrow[swap]{r}{f}                  & B,
    \end{tikzcd}\]
    where $\tau:A\to (f^*E)_{n+1}$ exists by the universal property of the pullback. Thus, we get a section of $(f^*p)_{n+1}$ and hence the inequality $\dsecat(f^*p)\le n$.
\end{proof}

Suppose $p \colon E\to B$ is a covering with, as is our convention, $B$ a (connected and, of course, locally connected) CW complex. If $H=\pi_1(E)\subset\pi_1(B)$, then we can construct $E$ up to homeomorphism over $B$ from the based path space $P_0(B)$ as follows (see~\cite[Chapter~2, Section~5]{Sp}). For each $\gamma\in P_0(B)$, we say $\gamma\sim_H\alpha$ for some $\alpha\in P_0(B)$ if $\gamma(0)=\alpha(0)$ and $\bar\gamma\ast\alpha$ is a loop whose based homotopy class in $B$ represents an element of $H$. Then $E=P_0(B)/\sim_H$. The quotient map $Q:P_0(B)\to E$ satisfies $e^B=p\circ Q$. For each $n$, we get an induced map $Q_{n}:P_0(B)_n\to E_n$ satisfying $e^B_n=p_n\circ Q_n$.

\begin{lemma}\label{lem:dsecat2}
    If $p:E\to B$ is a covering and $f:A\to B$ is a map, then $\dsecat(f^*p)\le\dcat(f)$, where $f^*p$ is the pullback of $p$ along $f$.
\end{lemma}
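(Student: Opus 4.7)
The plan is to combine the covering-space construction stated just before the lemma (giving a map $Q_n \colon P_0(B)_n \to E_n$ with $e^B_n = p_n \circ Q_n$) with the pullback argument used in the proof of \lemref{lem:dsecat1}. Concretely, if $\dcat(f) = n$, then by definition there is a lift $\widetilde f \colon A \to P_0(B)_{n+1}$ with $e^B_{n+1} \circ \widetilde f = f$. I would post-compose with $Q_{n+1}$ to obtain a map $Q_{n+1} \circ \widetilde f \colon A \to E_{n+1}$ satisfying
\[
p_{n+1} \circ (Q_{n+1} \circ \widetilde f) = (p_{n+1} \circ Q_{n+1}) \circ \widetilde f = e^B_{n+1} \circ \widetilde f = f.
\]

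With this lift of $f$ along $p_{n+1}$ in hand, I would invoke the universal property of the pullback exactly as in \lemref{lem:dsecat1}. The identification $f^*E_{n+1} \cong (f^*E)_{n+1}$ (which holds for the covering $p$ since $\B_{n+1}$ preserves the homeomorphisms between fibers) gives a pullback square whose outer triangle is filled by $1_A$ and $Q_{n+1} \circ \widetilde f$; the induced map $\tau \colon A \to (f^*E)_{n+1}$ is then a section of $(f^*p)_{n+1}$, proving $\dsecat(f^*p) \le n$.

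There is no serious obstacle here: the whole argument is a formal transport of a distributional lift across the quotient map $Q$, together with the pullback universal property already exploited in \lemref{lem:dsecat1}. The only subtlety worth spelling out is continuity of $Q_{n+1}$ and the fiberwise identification needed for the pullback step, but both are guaranteed by the construction of $E$ as $P_0(B)/\!\sim_H$ and the functoriality of $\B_{n+1}$ on continuous maps between fibers of $p$.
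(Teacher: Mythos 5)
Your argument is correct and is essentially identical to the paper's proof: both start from a lift $\widetilde f$ (the paper calls it $s$) of $f$ along $e^B_{n+1}$, post-compose with $Q_{n+1}$ to produce a lift of $f$ along $p_{n+1}$, and then invoke the pullback universal property from the proof of Lemma~\ref{lem:dsecat1}. The remarks about continuity of $Q_{n+1}$ and the identification $f^*E_{n+1}\cong(f^*E)_{n+1}$ are sensible but routine, exactly as you say.
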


\begin{proof}
    Suppose $\dcat(f)=n$ and $s:A\to P_0(B)_{n+1}$ is a lift of $f$ along $e^B_{n+1}$, i.e., $e^B_{n+1}\circ s=f$. Then $p_{n+1}\circ Q_{n+1}\circ s=e^B_{n+1}\circ s=f$. Thus proceeding as in Lemma~\ref{lem:dsecat1}, from the universal property of the pullback of $p_{n+1}$ along $f$, we get a section of $(f^*p)_{n+1}$ and hence the desired inequality $\dsecat(f^*p)\le n$.
\end{proof}

We note that the inequality $\secat(f^*p)\le\min\{\secat(p),\cat(f)\}$ holds for any Hurewicz fibration $p:E\to B$ (see~\cite[Section 9.3]{CLOT} for the case when $f=1_B$).

\section{Distributional one-category}\label{sec:dcat1}
In this section, we define a new invariant of distributional type as an analogue to the classical invariant $\cat_1$, which we now describe. 

In the notations of Section~\ref{subsec:secat section}, if $E = \widetilde X$ is the universal cover of $X$ and $f$ is the covering map $q^X$, then the \emph{one-category} of $X$, denoted $\cat_1(X)$, is defined 
as $\cat_1(X):=\secat(q^X)$ --- we refer to~\cite{Op} for a survey on $\cat_1$ (see also~\cite{OS}). Similarly, for a map $f\colon Y \to X$, we may define $\cat_1(f) = \secat(f^*q^X)$. 

One particular result about $\cat_1$ that is often useful is the following.

\begin{theorem}\label{thm:cat1EG}
$\cat_1(X)$ is the least integer $k$ such that there is a map $X \to L$ with $\hdim(L)=k$ that induces an isomorphism of fundamental groups. 
\end{theorem}
For an Eilenberg--Mac~Lane space $X = K(\pi,1)$, the theorem can be shown to give 
$\cat_1(X)=\cd(\pi)$, where $\cd(\pi)$ is the cohomological dimension of $\pi$ (see~\cite{Op} for instance). 

\begin{definition}
    If $q\colon \Wi{X}\to X$ is the universal covering, then the least integer $n$ such that $q_{n+1}^X$ has a section is called the \emph{distributional one-category} of $X$, denoted $\dcat_1(X)$. More generally, if
$f\colon Y \to X$ is a map, then the least $n$ such that $f$ has a lift $\widetilde f$ in
\[
\begin{tikzcd}[contains/.style = {draw=none,"\in" description,sloped}]
&
\widetilde X_{n+1} \arrow{d}{q_{n+1}^X}
\\
Y  \arrow{r}{f} \arrow{ur}{\widetilde{f}}
&
X
\end{tikzcd}
\]
is called the \emph{distributional one-category} of $f$, denoted $\dcat_1(f)$.
\end{definition}

Again, it follows from the proof of Lemma~\ref{lem:dsecat1} that $\dcat_1(f)=\dsecat(f^*q^X)$, where $f^*$ is the pullback of the universal covering map $q^X:\widetilde{X}\to X$ along $f$. If $f=1_X$, then we write $\dcat_1(1_X)=\dcat_1(X)$ and so, $\dcat_1(X)=\dsecat(q^X)$.

\subsection{Basic properties}\label{subsec: basic properties}

If $X$ is not simply connected, then there cannot be a section of the universal covering map because the composition of the induced maps 
on fundamental groups --- which must be the identity --- factors through the trivial group. Hence, $\cat_1(X)=0$ if and only if $X$ is simply connected. The same 
argument says that $\dcat_1(X)=0$ if and only if $X$ is simply connected. Furthermore, we have the following.

\begin{proposition}\label{prop:min1cat1}
For any $f:Y\to X$, we have that
$$\dcat_1(f) \leq \cat_1(f).
$$
\end{proposition}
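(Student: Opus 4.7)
The plan is to reduce this to the general sectional-category comparison $\dsecat(p)\le\secat(p)$ for fibrations with paracompact base, which is already recorded in Section~\ref{sec:basic2}. Indeed, by the definition $\cat_1(f)=\secat(f^*q^X)$ given just before the proposition, and the identification $\dcat_1(f)=\dsecat(f^*q^X)$ observed immediately after the definition of $\dcat_1(f)$, the inequality to be proved is nothing other than
\[
\dsecat(f^*q^X) \;\le\; \secat(f^*q^X).
\]
Since $Y$ is a CW complex and hence paracompact, this falls under the general principle cited in the preliminaries.

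To make the argument fully explicit, I would unpack it as follows. Suppose $\cat_1(f)=n$, so that the iterated join fibration $\ast^{n+1}(f^*q^X)\colon \ast^{n+1}(f^*\widetilde X)\to Y$ admits a section $\sigma$. By the paracompactness of $Y$, there is a continuous map
\[
\psi_{n+1}\colon \ast^{n+1}(f^*\widetilde X)\;\longrightarrow\;(f^*\widetilde X)_{n+1},\qquad \sum_{i=1}^{n+1}\lambda_i e_i \;\longmapsto\;\sum_{i=1}^{n+1}\lambda_i\,\delta_{e_i},
\]
intertwining the iterated join fibration with the distributional fibration $(f^*q^X)_{n+1}$. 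Composing, $\psi_{n+1}\circ\sigma\colon Y\to (f^*\widetilde X)_{n+1}$ is then a section of $(f^*q^X)_{n+1}$, which yields $\dsecat(f^*q^X)\le n$ and hence $\dcat_1(f)\le\cat_1(f)$.

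There is no real obstacle here: all the technical content (the existence and continuity of $\psi_{n+1}$, the invariance of $\dsecat$ under fiber-homotopy equivalences, and the identification of $\cat_1$ and $\dcat_1$ of a map with sectional category invariants of $f^*q^X$) is already in place. The proposition is, in essence, a formal consequence of the preliminaries applied to the particular covering fibration $f^*q^X$.
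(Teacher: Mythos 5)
Your proof is correct and uses essentially the same idea as the paper: both rest on the continuous comparison map $\psi_{n+1}$ from the iterated fiber join to the space of finitely supported fiberwise measures, cited from~\cite{KK,Ja1}. The only cosmetic difference is that you phrase the argument as sections of the pullback fibration $f^*q^X$ over $Y$ (thereby invoking the general fact $\dsecat\le\secat$ from Section~\ref{sec:basic2}), whereas the paper phrases it directly as lifts of $f$ along $\ast^{n+1}\widetilde X\to X$ and the quotient $\ast^{n+1}\widetilde X\to \widetilde X_{n+1}$; these are the same computation transported across the pullback square.
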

\begin{proof}
The space $\widetilde X_{n+1}$ is homotopic to a quotient of the $(n+1)$-th iterated fiber join $\ast^{n+1} \widetilde X$ by~\cite{KK} (see also~\cite[Section 8]{Ja1}), and $\cat_1(f)$ is the least $n$ such that there is a lift of $f:Y\to X$ along the fibration $\ast^{n+1} \widetilde X \to X$. Any such lift then induces a lift of $f$ along $q_{n+1}^X\colon\widetilde X_{n+1} \to X$.
\end{proof}

\begin{proposition}\label{prop:min}
For any $f\colon Y \to X$, we have
\[
\dcat_1(f) \leq \min\{\dcat(f),\dcat_1(Y),\, \dcat_1(X)\}.
\]
\end{proposition}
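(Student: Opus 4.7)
The plan is to handle the three inequalities separately. The first two will follow directly from the lemmas just proved in \secref{sec:basic2}, while the third requires a short construction.

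Recall that by definition $\dcat_1(f) = \dsecat(f^*q^X)$, where $q^X\colon \widetilde X \to X$ is the universal covering. Since $q^X$ is, in particular, a covering, I can apply \lemref{lem:dsecat2} with $p = q^X$ to obtain $\dsecat(f^*q^X) \leq \dcat(f)$. Similarly, \lemref{lem:dsecat1} applied to $p = q^X$ gives $\dsecat(f^*q^X) \leq \dsecat(q^X) = \dcat_1(X)$. Both of these are essentially one-line invocations.

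For the remaining bound $\dcat_1(f) \leq \dcat_1(Y)$, the plan is to use functoriality of the universal covering. Because $\widetilde Y$ is simply connected, the map $f\circ q^Y\colon \widetilde Y \to X$ admits a (basepoint-preserving) lift $\widetilde f\colon \widetilde Y \to \widetilde X$ along $q^X$, i.e., $q^X\circ \widetilde f = f\circ q^Y$. Consequently, $\widetilde f$ carries each fiber $(q^Y)^{-1}(y)$ into $(q^X)^{-1}(f(y))$, so the continuous pushforward of measures induces, for every $n\ge 1$, a fiberwise map
\[
\widetilde f_{n+1}\colon \widetilde Y_{n+1} \longrightarrow \widetilde X_{n+1},\qquad q^X_{n+1}\circ \widetilde f_{n+1} = f\circ q^Y_{n+1}.
\]
Given $\dcat_1(Y)=n$ and a section $s\colon Y \to \widetilde Y_{n+1}$ of $q^Y_{n+1}$, the composition $\widetilde f_{n+1}\circ s$ satisfies $q^X_{n+1}\circ(\widetilde f_{n+1}\circ s) = f\circ q^Y_{n+1}\circ s = f$, exhibiting a lift of $f$ along $q^X_{n+1}$. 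This yields $\dcat_1(f)\le n$ and completes the proof.

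The only non-cosmetic point to check is that the pushforward $\widetilde f_{n+1}$ is continuous with respect to the L\'evy--Prokhorov topology on spaces of measures; this is a standard property of pushforward along continuous maps, so I do not expect it to be a serious obstacle. The main (though still minor) conceptual step is the functorial construction of $\widetilde f$, which is the only ingredient not already packaged into \lemref{lem:dsecat1} and \lemref{lem:dsecat2}.
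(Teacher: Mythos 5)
Your proof is correct and follows essentially the same route as the paper: the $\dcat(f)$ bound via \lemref{lem:dsecat2}, and the $\dcat_1(Y)$ bound via the induced map $\widetilde f_{n+1}\colon\widetilde Y_{n+1}\to\widetilde X_{n+1}$ composed with a section of $q^Y_{n+1}$ (you spell out the lift $\widetilde f$ more explicitly than the paper, which is a harmless clarification). The only cosmetic difference is the $\dcat_1(X)$ bound, where you invoke \lemref{lem:dsecat1} while the paper just composes a section $s\colon X\to\widetilde X_{n+1}$ with $f$ directly; both amount to the same observation.
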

\begin{proof}
Since $\dcat_1(f)=\dsecat(f^*q^X)$, the inequality $\dcat_1(f)\le\dcat(f)$ follows from Lemma~\ref{lem:dsecat2} upon taking the covering map as the universal covering $q^X$.

Let $\dcat_1(Y)=n$. Suppose $\wt{f}\colon \wt Y\to \wt X$ is the obvious map covering $f$, and $\wt{f}_{n+1}\colon \wt{Y}_{n+1}\to \wt{X}_{n+1}$ is the natural map over $f$ induced by $\wt f$, see Remark~\ref{rem:naturality}.
A section $s\colon Y \to \widetilde Y_{n+1}$ gives a lifting $g =
\wt{f}_{n+1} \circ s$ with
\[
q_{n+1}^X \circ  g = q_{n+1}^X \circ \wt{f}_{n+1} \circ s = f\circ q_{n+1}^Y \circ s = f
\]
and so $\dcat_1(f) \leq n$.

If $\dcat_1(X)=n$, then a section $s\colon X \to \widetilde X_{n+1}$ gives $h = s\circ f$
with
$$q_{n+1}^X \circ h = q_{n+1}^X\circ s \circ f = f$$
and so $\dcat_1(f) \leq n$.
\end{proof}

The definition of the universal cover $\widetilde X$ as fixed endpoint homotopy classes of based paths in
$X$ gives a map $\omega\colon P_0(X) \to \widetilde X$ with $q^X \circ \omega = e^X$ (\emph{cf}. Section~\ref{sec:basic2}), which is a fiber homotopy
equivalence when $X$ is an Eilenberg--Mac~Lane space $K(\pi,1)$. The map $\omega$ induces a map $\omega_n\colon P_0(X)_n \to \widetilde X_n$ for all $n$
with $q_n^X \circ \omega_n = e_n^X$.

\begin{proposition}\label{prop:XKpi1}
For any $f\colon Y \to X$ with $X=K(\pi,1)$, we have
$$\dcat_1(f) = \dcat(f).$$
In particular, $\dcat_1(\pi)=\dcat(\pi)$.
\end{proposition}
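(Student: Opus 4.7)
The inequality $\dcat_1(f) \leq \dcat(f)$ is already contained in Proposition~\ref{prop:min}, so it suffices to establish the reverse inequality $\dcat(f) \leq \dcat_1(f)$ under the assumption $X = K(\pi,1)$.

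The main input is already supplied in the paragraph immediately preceding the statement: the map $\omega\colon P_0(X)\to\widetilde{X}$ satisfying $q^X\circ\omega = e^X$ is a fiber homotopy equivalence over $X$ whenever $X$ is aspherical. Since fiber homotopy equivalence is preserved under pullback (a fiberwise homotopy inverse $\omega'$ and the corresponding fiberwise homotopies $\omega\omega'\simeq\id$, $\omega'\omega\simeq\id$ pull back along $f$ to give the analogous data over $Y$), the two pullbacks $f^{*}e^{X}$ and $f^{*}q^{X}$ are fiber homotopy equivalent fibrations over $Y$.

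Invoking the fact recorded in Section~\ref{sec:basic2} that fiberwise homotopic fibrations have equal distributional sectional category, the chain
\[
\dcat(f) \;=\; \dsecat(f^{*}e^{X}) \;=\; \dsecat(f^{*}q^{X}) \;=\; \dcat_1(f)
\]
then delivers the desired equality, the outer equalities being the definitions of $\dcat(f)$ and $\dcat_1(f)$. If one preferred to avoid the language of pulled-back fibrations, an equivalent direct argument would apply the functor $\B_{n+1}$ to the fiberwise equivalence $\omega$ to obtain a fiber homotopy equivalence $\omega_{n+1}\colon P_0(X)_{n+1}\to\widetilde{X}_{n+1}$ satisfying $q_{n+1}^{X}\circ\omega_{n+1} = e_{n+1}^{X}$; composing a lift $\widetilde{f}\colon Y\to\widetilde{X}_{n+1}$ of $f$ along $q_{n+1}^{X}$ with the fiberwise homotopy inverse $\omega'_{n+1}$ produces a homotopy lift of $f$ along $e_{n+1}^{X}$, which upgrades to a true lift by Remark~\ref{rmk:homotopy section}. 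No step in this plan is substantively difficult; the only point requiring a brief check is the preservation of fiberwise homotopy equivalence under pullback (or, in the alternate form, under $\B_{n+1}$), and the essential ingredients have already been set up in Section~\ref{sec:prelim} and in the preamble to the statement.
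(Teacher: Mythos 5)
Your proposal is correct and takes essentially the same approach as the paper: the paper's proof is exactly your "alternate direct argument" (compose a lift along $q_{n+1}^X$ with a fiberwise homotopy inverse of $\omega_{n+1}$ and upgrade via Remark~\ref{rmk:homotopy section}), while your primary phrasing via pullbacks and the invariance of $\dsecat$ under fiber homotopy equivalence is just a repackaging of the same idea. Both readings are sound.
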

\begin{proof}
In view of Proposition~\ref{prop:min}, we must only show that $\dcat_1(f) \geq \dcat(f)$. If $\dcat_1(f) = n$, then there is a lift $\widetilde f\colon
Y \to \widetilde X_{n+1}$ with $q_{n+1}^X\circ \widetilde f = f$. Now, $\omega_{n+1}\colon P_0(X)_{n+1}
\to \widetilde X_{n+1}$ is a homotopy equivalence over $X$, so it is a fiber homotopy equivalence as well. Suppose $\omega_{n+1}^{-1}:\widetilde X_{n+1}\to P_0(X)_{n+1}$ is a homotopy inverse of $\omega_{n+1}$. Then we have for the composition $\omega_{n+1}^{-1} \circ \widetilde f \colon Y \to P_0(X)_{n+1}$ that
\[
e_{n+1}^X \circ \omega_{n+1}^{-1} \circ \widetilde f \simeq q_{n+1}^X\circ\widetilde f = f.
\]
Now, the map $w_{n+1}^{-1} \circ \widetilde{f}$ helps us obtain a lift of $f$ (\emph{cf.} Remark~\ref{rmk:homotopy section}). This implies the inequality $\dcat(f) \leq n$.
\end{proof}

\begin{proposition}\label{prop:classmap}
Let $\kappa\colon X \to K(\pi,1)$ classify the universal cover. For any map $f\colon Y \to X$, we have
$$\dcat_1(f) \leq \dcat(\kappa\circ f) \leq \dcat_1(Y).$$
\end{proposition}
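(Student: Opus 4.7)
The plan is to reduce both inequalities to Propositions~\ref{prop:min} and~\ref{prop:XKpi1} by exploiting the defining property of $\kappa$: since it classifies the universal cover of $X$, the pullback $\kappa^* q^{K(\pi,1)}$ is isomorphic to the universal covering $q^X$ as a covering of $X$.

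For the left inequality $\dcat_1(f) \leq \dcat(\kappa\circ f)$, I first invoke naturality of pullbacks along the composition $\kappa\circ f$, which yields an isomorphism of fibrations over $Y$:
\[
f^* q^X \;\cong\; f^*\bigl(\kappa^* q^{K(\pi,1)}\bigr) \;\cong\; (\kappa\circ f)^* q^{K(\pi,1)}.
\]
Passing to $\dsecat$ and using the definition of $\dcat_1$ for maps gives $\dcat_1(f) = \dcat_1(\kappa\circ f)$. Then \propref{prop:XKpi1}, applied to the map $\kappa\circ f\colon Y \to K(\pi,1)$, yields $\dcat_1(\kappa\circ f) = \dcat(\kappa\circ f)$. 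Chaining these actually produces the stronger equality $\dcat_1(f) = \dcat(\kappa\circ f)$, which in particular implies the desired inequality.

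For the right inequality $\dcat(\kappa\circ f) \leq \dcat_1(Y)$, I apply \propref{prop:min} directly to the map $\kappa\circ f\colon Y \to K(\pi,1)$ to get $\dcat_1(\kappa\circ f) \leq \dcat_1(Y)$. Combined with the equality $\dcat(\kappa\circ f) = \dcat_1(\kappa\circ f)$ from \propref{prop:XKpi1}, this immediately gives $\dcat(\kappa\circ f) \leq \dcat_1(Y)$.

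No serious obstacle is anticipated: the proof is essentially bookkeeping, tying together the two earlier propositions with the pullback description of $q^X$ provided by $\kappa$. The one point that deserves attention is the fibration isomorphism $f^* q^X \cong (\kappa\circ f)^* q^{K(\pi,1)}$, but this is a standard consequence of the composition of pullbacks, applied here to the covering classified by the Eilenberg--Mac~Lane map.
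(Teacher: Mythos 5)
Your proof is correct and follows essentially the same route as the paper: both exploit the pullback description of $q^X$ coming from $\kappa$ together with Propositions~\ref{prop:min} and~\ref{prop:XKpi1}. The only real difference is that you pull back the universal covering $q^{K(\pi,1)}$ directly (a strict pullback, using that $\kappa^*q^{K(\pi,1)} \cong q^X$ as coverings), whereas the paper uses the fiber-homotopy-equivalent based-path fibration $e^{K(\pi,1)}$ and a homotopy pullback; your version is slightly cleaner and in fact establishes the stronger equality $\dcat_1(f) = \dcat(\kappa\circ f)$, rather than only the stated inequality.
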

\begin{proof}
Up to homotopy, we have a pullback diagram
\[
\begin{tikzcd}[contains/.style = {draw=none,"\in" description,sloped}]
\widetilde X \arrow{r}{} \arrow[swap]{d}{q^X}
&
P_0(K) \arrow{d}{e^K}
\\
X  \arrow{r}{\kappa}
&
K,
\end{tikzcd}
\]
where $K = K(\pi,1)$. We now have the following homotopy pullback diagram (\emph{cf.} the proof of Lemma~\ref{lem:dsecat1})
\[
\begin{tikzcd}[contains/.style = {draw=none,"\in" description,sloped}]
\widetilde X_{n+1} \arrow{r}{} \arrow[swap]{d}{q_{n+1}^X}
&
P_0(K)_{n+1} \arrow{d}{e_{n+1}^K}
\\
X  \arrow{r}{\kappa}
&
K.
\end{tikzcd}
\]
If $\dcat_1(\kappa\circ f)=n$, we have a lift $\widetilde{\kappa f}\colon Y\to P_0(K)_{n+1}$ with $e_{n+1}^K\circ \widetilde{\kappa f}=\kappa\circ f$. Using the universal property of this pullback, we obtain a lift of $f$ along $q_{n+1}^X$, and therefore,
$\dcat_1(f) \leq \dcat_1(\kappa\circ f)$. By Proposition~\ref{prop:XKpi1}, we have the equality $\dcat_1(\kappa\circ f)
= \dcat(\kappa\circ f)$, so the first inequality is proven. The second inequality follows from Proposition
\ref{prop:min}.
\end{proof}

For the categorical invariant $\cat_1(Y)$, we have that $\cat_1(Y) = \cat(\kappa)$, where $\kappa\colon
Y \to K(\pi_1(Y),1)$ classifies the universal cover, see~\cite[Section 9.3]{CLOT}. Here is the analogous result for $\dcat_1$.

\begin{corollary}\label{cor:classmap}
Let $\kappa\colon Y \to K(\pi,1)$ classify the universal cover. Then
$$\dcat_1(Y) = \dcat_1(\kappa) = \dcat(\kappa).$$
\end{corollary}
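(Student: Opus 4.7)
The plan is to observe that this corollary is essentially an assembled consequence of Proposition~\ref{prop:classmap} and Proposition~\ref{prop:XKpi1}, with no new geometric input required. First, I would apply Proposition~\ref{prop:classmap} to the identity map $f = 1_Y\colon Y \to Y$. Since $\kappa\circ 1_Y = \kappa$ and $\dcat_1(1_Y) = \dcat_1(Y)$, the displayed inequalities collapse to
\[
\dcat_1(Y) \leq \dcat(\kappa) \leq \dcat_1(Y),
\]
forcing the equality $\dcat_1(Y) = \dcat(\kappa)$.

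Next, to insert $\dcat_1(\kappa)$ into the chain, I would invoke Proposition~\ref{prop:XKpi1} with the map $\kappa\colon Y \to K(\pi,1)$. Because the target of $\kappa$ is an Eilenberg--Mac~Lane space, Proposition~\ref{prop:XKpi1} directly yields $\dcat_1(\kappa) = \dcat(\kappa)$. Stringing this together with the previous equality gives
\[
\dcat_1(Y) = \dcat(\kappa) = \dcat_1(\kappa),
\]
which is exactly the statement of the corollary.

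There is no real obstacle here; both auxiliary results have already done the work. The only point worth checking carefully is that the two occurrences of ``$\kappa$'' in the statement really are the same map---namely, that the classifying map $\kappa\colon Y \to K(\pi,1)$ used as input to Proposition~\ref{prop:classmap} (where $X = Y$ and $f = 1_Y$) agrees with the $\kappa$ whose invariants $\dcat_1(\kappa)$ and $\dcat(\kappa)$ appear in the conclusion. This is immediate from the setup since $\pi = \pi_1(Y)$ in the corollary. Thus the corollary follows in two short lines.
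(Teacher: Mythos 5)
Your proof is correct and matches the paper's own argument exactly: both apply Proposition~\ref{prop:classmap} with $f = 1_Y$ to collapse the chain to $\dcat_1(Y) = \dcat(\kappa)$, and both then invoke Proposition~\ref{prop:XKpi1} (with $\kappa$ mapping into the Eilenberg--Mac~Lane space) to conclude $\dcat_1(\kappa) = \dcat(\kappa)$. Your version is merely a bit more explicit about the bookkeeping.
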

\begin{proof}
In Proposition~\ref{prop:classmap}, let $f=1_Y$. We then get
$$\dcat_1(Y) = \dcat_1(1_Y) \leq \dcat(\kappa) \leq \dcat_1(Y)$$
using Proposition~\ref{prop:XKpi1} so that the conclusion follows.
\end{proof}

\begin{proposition}\label{prop:compmin}
For the composition $X \stackrel{f}{\to} Y \stackrel{g}{\to} Z$, we have
$$\dcat_1(g \circ f) \leq \min\{\dcat_1(f),\, \dcat_1(g)\}.$$
\end{proposition}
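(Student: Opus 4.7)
The plan is to prove the two inequalities separately. For the easier one, $\dcat_1(g \circ f) \leq \dcat_1(g)$, I would suppose $\dcat_1(g) = n$ and pick a lift $\widetilde{g} \colon Y \to \widetilde{Z}_{n+1}$ of $g$ with $q^Z_{n+1} \circ \widetilde{g} = g$. Precomposing with $f$ yields $\widetilde{g} \circ f \colon X \to \widetilde{Z}_{n+1}$, and this is a lift of $g \circ f$ because $q^Z_{n+1} \circ \widetilde{g} \circ f = g \circ f$. Hence $\dcat_1(g \circ f) \leq n$.

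For the other inequality, $\dcat_1(g \circ f) \leq \dcat_1(f)$, I need to transport a lift of $f$ along the universal cover of $Y$ to a lift of $g \circ f$ along the universal cover of $Z$. The key step is to observe that the construction $X \mapsto \widetilde{X}_{n+1}$ is functorial with respect to maps up to basepoint choices. Concretely, since $\widetilde{Y}$ is simply connected, the general lifting criterion for covering spaces gives a continuous map $\bar{g} \colon \widetilde{Y} \to \widetilde{Z}$ with $q^Z \circ \bar{g} = g \circ q^Y$. Pushforward of measures by $\bar{g}$ is continuous with respect to the Lévy--Prokhorov metric and sends a Dirac combination $\sum a_i \delta_{\widetilde{y}_i}$ to $\sum a_i \delta_{\bar{g}(\widetilde{y}_i)}$; since $\bar{g}$ carries the fiber $(q^Y)^{-1}(y)$ into the fiber $(q^Z)^{-1}(g(y))$, this pushforward restricts to a well-defined continuous map $\bar{g}_* \colon \widetilde{Y}_{n+1} \to \widetilde{Z}_{n+1}$ satisfying $q^Z_{n+1} \circ \bar{g}_* = g \circ q^Y_{n+1}$.

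With this in hand, I would take $\dcat_1(f) = n$ and a lift $\widetilde{f} \colon X \to \widetilde{Y}_{n+1}$ with $q^Y_{n+1} \circ \widetilde{f} = f$. Then the composition $\bar{g}_* \circ \widetilde{f} \colon X \to \widetilde{Z}_{n+1}$ satisfies
\[
q^Z_{n+1} \circ \bar{g}_* \circ \widetilde{f} \;=\; g \circ q^Y_{n+1} \circ \widetilde{f} \;=\; g \circ f,
\]
so it is a lift of $g \circ f$ along $q^Z_{n+1}$, giving $\dcat_1(g \circ f) \leq n$.

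The only mildly delicate point is the construction and continuity of the fiberwise pushforward $\bar{g}_*$; once one verifies that $\bar{g}$ respects fibers over $Y$ and $Z$ and that the support condition is preserved, everything else is formal. The argument is otherwise parallel to the two bounds in Proposition~\ref{prop:min} and uses no topology beyond the basic functoriality of probability-measure spaces under continuous maps already recalled in Section~\ref{sec:basic2}.
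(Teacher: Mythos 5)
Your proposal is correct and follows essentially the same route as the paper: both inequalities come from pre- or post-composing with the appropriate induced map, with the induced map $\widetilde{Y}_{n+1}\to\widetilde{Z}_{n+1}$ being the key tool for the $\dcat_1(f)$ bound. The paper simply draws this arrow in a commutative diagram without comment, whereas you carefully construct it via the lifting criterion and fiberwise pushforward of measures — a worthwhile elaboration, but not a different argument.
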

\begin{proof}
Suppose $\dcat_1(f)=n$. Then we have a diagram
\[
\begin{tikzcd}[contains/.style = {draw=none,"\in" description,sloped}]
&
\widetilde Y_{n+1} \arrow{r}{} \arrow[swap]{d}{}
&
\widetilde Z_{n+1} \arrow{d}{}
\\
X  \arrow{r}{f}  \arrow{ur}{\widetilde f}
&
Y \arrow{r}{g}
&
Z
\end{tikzcd}
\]
which shows that $\dcat_1(g\circ f) \leq n$.

Suppose $\dcat_1(g)=n$. Then we have a diagram
\[
\begin{tikzcd}[contains/.style = {draw=none,"\in" description,sloped}]
&
&
\widetilde Z_{n+1} \arrow{d}{}
\\
X  \arrow{r}{f}
&
Y \arrow{r}{g}  \arrow{ur}{\widetilde g}
&
Z
\end{tikzcd}
\]
which shows $\dcat(g\circ f) \leq n$.
\end{proof}

\begin{corollary}\label{cor:isoineq}
If $f\colon Y \to X$ induces an isomorphism of fundamental groups, then
$$\dcat_1(Y) \leq \dcat_1(X).$$
\end{corollary}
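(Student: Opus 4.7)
The plan is to route the problem through the classifying map for the universal cover and use Corollary~\ref{cor:classmap} as the bridge between $\dcat_1$ and $\dcat$. Set $\pi := \pi_1(X) \cong \pi_1(Y)$ (via $f_*$) and let $\kappa\colon X \to K(\pi,1)$ classify the universal cover of $X$. Because maps into an Eilenberg--Mac~Lane space are determined up to homotopy by the induced homomorphism on $\pi_1$, and $(\kappa \circ f)_* = \kappa_* \circ f_*$ is an isomorphism (a composition of isomorphisms), the map $\kappa \circ f\colon Y \to K(\pi,1)$ also classifies the universal cover of $Y$. This identification is the only spot where the fundamental-group hypothesis on $f$ enters.

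Next I would apply Corollary~\ref{cor:classmap} twice: to $Y$ with classifying map $\kappa \circ f$, giving $\dcat_1(Y) = \dcat(\kappa \circ f)$; and to $X$ with classifying map $\kappa$, giving $\dcat_1(X) = \dcat(\kappa)$. Therefore the required inequality $\dcat_1(Y) \leq \dcat_1(X)$ reduces to the composition inequality
\[
\dcat(\kappa \circ f) \leq \dcat(\kappa).
\]

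This last step is the analogue of Proposition~\ref{prop:compmin} for $\dcat$ rather than $\dcat_1$, and it is essentially immediate from the definition: if $\dcat(\kappa) = n$, then $\kappa$ admits a lift $\widetilde{\kappa}\colon X \to P_0(K(\pi,1))_{n+1}$ along $e^{K(\pi,1)}_{n+1}$, and then $\widetilde{\kappa} \circ f\colon Y \to P_0(K(\pi,1))_{n+1}$ is a lift of $\kappa \circ f$, proving $\dcat(\kappa \circ f) \leq n$.

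No step here looks particularly hard; the only subtlety is to confirm that $\kappa \circ f$ is genuinely a universal-cover classifier for $Y$ so that Corollary~\ref{cor:classmap} applies, and that confirmation is exactly what the $\pi_1$-isomorphism hypothesis buys us. One could alternatively deduce the result by applying Proposition~\ref{prop:classmap} to $f\colon Y \to X$, which gives $\dcat(\kappa \circ f) \leq \dcat_1(Y)$; combined with the identification $\dcat(\kappa \circ f) = \dcat_1(Y)$ above, Proposition~\ref{prop:classmap}'s first inequality becomes an equality, and the composition bound by $\dcat(\kappa) = \dcat_1(X)$ finishes the argument the same way.
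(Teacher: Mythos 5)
Your proof is correct and essentially matches the paper's: both route through the classifying-map identification (Corollary~\ref{cor:classmap}) together with the observation that $\kappa \circ f$ classifies the universal cover of $Y$, then finish with a composition inequality. The only cosmetic difference is that the paper cites Proposition~\ref{prop:compmin} for the composition bound (in $\dcat_1$ form) where you re-derive it directly for $\dcat$ by composing the lift with $f$; these are the same argument.
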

\begin{proof}
We have a diagram
$$\xymatrix{
Y \ar[rr]^-{\kappa_Y} \ar[dr]_-f && K(\pi,1) \\
& X, \ar[ur]_-{\kappa_X}
}
$$
where $\kappa_Y$ and $\kappa_X$ classify the respective universal covers. By Corollary
\ref{cor:classmap} we know that $\dcat_1(Y)=\dcat_1(\kappa_Y)$ and $\dcat_1(X) =
\dcat_1(\kappa_X)$. By Proposition~\ref{prop:compmin}, we then have
$$\dcat_1(Y) =\dcat_1(\kappa_Y) \leq \min\{\dcat_1(f),\, \dcat_1(X)\}\leq \dcat_1(X).$$
The above inequality also gives $\dcat_1(Y)\le\dcat_1(f)$, which, by Proposition~\ref{prop:min} implies $\dcat_1(f)=\dcat_1(Y)$.
\end{proof}

In particular, we get the following using Corollary~\ref{cor:isoineq} and Proposition~\ref{prop:min}.

\begin{corollary}\label{cor:oneside}
    If $s:X\to L$ induces an isomorphism of fundamental groups, then $\dcat_1(X)\le\dcat(L)$.
\end{corollary}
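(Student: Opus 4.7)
The plan is to obtain the inequality by chaining two results already established in the excerpt, with no new construction required. First I would apply Corollary~\ref{cor:isoineq} directly to the map $s\colon X\to L$: since $s$ induces an isomorphism on fundamental groups (with the roles of $Y$ and $X$ in that corollary played by $X$ and $L$ here), the corollary yields
\[
\dcat_1(X)\ \le\ \dcat_1(L).
\]

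Next I would invoke Proposition~\ref{prop:min} applied to the identity map $1_L\colon L\to L$. That proposition gives $\dcat_1(f)\le\dcat(f)$ for any map $f$, and specializing to $f=1_L$ yields
\[
\dcat_1(L)\ =\ \dcat_1(1_L)\ \le\ \dcat(1_L)\ =\ \dcat(L).
\]
Concatenating the two displayed inequalities produces $\dcat_1(X)\le\dcat(L)$, which is the claim. Since both ingredients are stated and proved earlier in the paper, there is no genuine obstacle here; the only subtlety worth flagging is that Corollary~\ref{cor:isoineq} is asymmetric (the inequality runs in the direction of the map), so one must check that the $\pi_1$-isomorphism $s$ goes from $X$ to $L$ and not the other way around, which is precisely the hypothesis.
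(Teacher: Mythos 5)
Your proof is correct and follows precisely the route indicated in the paper, which states that the corollary is obtained from Corollary~\ref{cor:isoineq} and Proposition~\ref{prop:min}. The chaining of $\dcat_1(X)\le\dcat_1(L)$ (from the $\pi_1$-isomorphism) with $\dcat_1(L)\le\dcat(L)$ is exactly the intended argument.
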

This corollary can also be obtained from Lemma~\ref{lem:dsecat1} and Proposition~\ref{prop:min}.
\begin{remark}
    The (weaker) converse of Corollary~\ref{cor:oneside} is not true. Indeed, we have $\dcat_1(\R P^n)\le\dcat(\R P^n)= 1$ for each $n$ by Proposition~\ref{prop:min} and~\cite{DJ,KW}, and if $\dim(L)=1$ for a non-contractible CW complex $L$, then $\cat(L)=1$ and $L$ is a co-$H$-space~(see~\cite{CLOT}), so that $\pi_1(L)$ is free. Hence, $s:\R P^n\to L$ cannot induce an isomorphism of fundamental groups for any $n\ge 2$. This should be compared with Theorem~\ref{thm:cat1EG}.
\end{remark}

\begin{proposition}\label{prop:retracts}
If $i\colon A \to X$ is a homotopy retract, then $\dcat_1(A) \leq \dcat_1(X)$.
\end{proposition}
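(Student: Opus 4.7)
The plan is to build the inequality from two earlier ingredients, Proposition~\ref{prop:compmin} (submultiplicativity of $\dcat_1$ under composition) and Proposition~\ref{prop:min} (the bound $\dcat_1(r)\le\dcat_1(X)$ for any map out of $X$), together with the homotopy invariance of $\dcat_1$ applied to maps. The hypothesis that $i\colon A\to X$ is a homotopy retract gives a map $r\colon X\to A$ with $r\circ i\simeq 1_A$, which is precisely the data needed to sandwich $\dcat_1(A)$ between $\dcat_1(r\circ i)$ and $\dcat_1(X)$.

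Concretely, I would first argue that $\dcat_1(r\circ i)=\dcat_1(1_A)=\dcat_1(A)$. This uses the fact that $\dcat_1(f)=\dsecat(f^*q^X)$ depends only on the homotopy class of $f$: if $f\simeq g$, then the pullback fibrations $f^*q^X$ and $g^*q^X$ are fiber homotopy equivalent, and by the fiber-homotopy invariance of $\dsecat$ recorded in Section~\ref{sec:basic2}, their $\dsecat$-values agree. Next, Proposition~\ref{prop:compmin} applied to $A\xrightarrow{i} X\xrightarrow{r} A$ gives
\[
\dcat_1(r\circ i)\le \min\{\dcat_1(i),\,\dcat_1(r)\}\le \dcat_1(r).
\]
Finally, Proposition~\ref{prop:min} applied to the map $r\colon X\to A$ yields $\dcat_1(r)\le \dcat_1(X)$ (using just the $\dcat_1(X)$-term in the minimum). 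Chaining these three facts gives
\[
\dcat_1(A)=\dcat_1(r\circ i)\le \dcat_1(r)\le \dcat_1(X),
\]
which is the desired conclusion.

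There is no real obstacle here; the only point that merits a line of justification is the map-level homotopy invariance of $\dcat_1$, which, as indicated above, reduces immediately to the fiber-homotopy invariance of $\dsecat$ already in hand. The argument also parallels the well-known proof that $\cat_1(A)\le \cat_1(X)$ for homotopy retracts, which serves as a useful sanity check on the approach.
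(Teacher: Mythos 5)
Your proof is correct, and it takes a more modular route than the paper's own. The paper constructs the section $r_{n+1}\circ s\circ i$ of $q^A_{n+1}$ directly from a section $s$ of $q^X_{n+1}$, verifying by a short diagram chase that $q^A_{n+1}\circ r_{n+1}\circ s\circ i = r\circ i\simeq 1_A$. You instead chain the already-established formal properties: map-level homotopy invariance of $\dcat_1$ (to get $\dcat_1(A)=\dcat_1(1_A)=\dcat_1(r\circ i)$), Proposition~\ref{prop:compmin} (to get $\dcat_1(r\circ i)\le\dcat_1(r)$), and Proposition~\ref{prop:min} (to get $\dcat_1(r)\le\dcat_1(X)$, using the source term of the minimum). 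The only point you add beyond these propositions is the map-level homotopy invariance, which you correctly reduce to the fiber-homotopy invariance of $\dsecat$ recorded in Section~\ref{sec:basic2} together with the standard fact that homotopic maps pull back a fibration to fiber-homotopy-equivalent fibrations. When unpacked, your chain produces exactly the same composite $r_{n+1}\circ s\circ i$ that the paper writes down, so the two proofs are mathematically equivalent; but yours makes cleaner use of the surrounding machinery and avoids re-deriving the composition argument in this special case, while the paper's is self-contained and explicit about where the section comes from. There is no circularity: Propositions~\ref{prop:compmin} and~\ref{prop:min} precede this result, and neither they nor the fiber-homotopy invariance of $\dsecat$ depend on Proposition~\ref{prop:retracts} or on the later Corollary~\ref{cor:homo inv}.
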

\begin{proof}
Let $r\colon X \to A$ be the homotopy retraction with $r\circ i \simeq 1_A$. Let
$\dcat_1(X)=n$ with section $s\colon X \to \widetilde X_{n+1}$. We then have the following commutative diagram:
$$\xymatrix{
& \widetilde X_{n+1} \ar[d]^-{q^X_{n+1}} \ar[r]^-{r_{n+1}} & \widetilde A_{n+1}
\ar[d]^-{q^A_{n+1}} \\
A \ar[ur]^-{s\circ i} \ar[r]^-i & X \ar@/^0.2cm/[u]^-s \ar[r]^-r & A.
}
$$
Therefore,
\[
q^A_{n+1}\circ r_{n+1}\circ s\circ i = r\circ q_{n+1}^X\circ s\circ i= r\circ i \simeq 1_A.
\]
The map $r_{n+1}\circ s\circ i$ yields a section of $q^A_{n+1}$. Thus, $\dcat_1(A) \leq n$ holds.
\end{proof}

Now, the homotopy invariance of $\dcat_1$ can be justified.

\begin{corollary}\label{cor:homo inv}
If $f\colon Y \to X$ is a homotopy equivalence, then
\[
\dcat_1(Y)=\dcat_1(X).
\]
\end{corollary}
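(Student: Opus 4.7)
The plan is to deduce this immediately from Proposition~\ref{prop:retracts}, observing that a homotopy equivalence witnesses each of $Y$ and $X$ as a homotopy retract of the other. Let $g \colon X \to Y$ be a homotopy inverse of $f$, so that $g \circ f \simeq 1_Y$ and $f \circ g \simeq 1_X$.

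The first relation says that $f \colon Y \to X$ plays the role of the map $i$ in Proposition~\ref{prop:retracts}, with $g$ playing the role of the retraction $r$, exhibiting $Y$ as a homotopy retract of $X$. Therefore $\dcat_1(Y) \leq \dcat_1(X)$. Symmetrically, the second relation exhibits $X$ as a homotopy retract of $Y$ via $g$ and $f$, yielding $\dcat_1(X) \leq \dcat_1(Y)$. Combining both inequalities gives the claimed equality.

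The only subtlety is that Proposition~\ref{prop:retracts} was stated with a strict retraction-style notation, so one should be mindful that its proof only uses the homotopy relation $r \circ i \simeq 1_A$ together with the homotopy lifting property (\emph{cf.} Remark~\ref{rmk:homotopy section}), which is exactly what is available here. No further work is required, and I do not anticipate any genuine obstacle.
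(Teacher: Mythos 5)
Your proof is correct and follows exactly the paper's own argument: apply Proposition~\ref{prop:retracts} twice, once in each direction, using the homotopy inverse $g$ to exhibit each of $Y$ and $X$ as a homotopy retract of the other. Your extra remark that the proposition only requires $r \circ i \simeq 1_A$ is a sensible sanity check, but no genuine subtlety arises.
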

\begin{proof}
If $g\colon X \to Y$ is a homotopy inverse for $f$, then $f$ and $g$ are homotopy retracts of each other. So, the inequalities $\dcat_1(X)\le\dcat_1(Y)$ and $\dcat_1(Y)\le\dcat_1(X)$ follow from Proposition~\ref{prop:retracts}.
\end{proof}

Corollary~\ref{cor:homo inv} can also be proven using Corollary~\ref{cor:isoineq} instead.

\subsection{Behavior on covering maps}\label{subsec: behavior covering}
We now aim to generalize Corollary~\ref{cor:isoineq} from isomorphisms to monomorphisms. To do that, we first look at covering spaces. In this paper, the symbol $\simeq_*$ means homotopy rel endpoints for paths.

\begin{lemma}\label{lem:lifthomo}
Suppose $p \colon Y \to X$ is a covering and $\omega$, $\sigma$ are two paths in $Y$ with $\omega(0)
= \sigma(0)$ and $p\circ \omega \simeq_* p \circ \sigma$. Then $\omega \simeq_* \sigma$.
\end{lemma}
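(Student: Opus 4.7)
The plan is to use the homotopy lifting property (HLP) of the covering $p$ together with the fact that fibers of $p$ are discrete, which forces any path in a single fiber to be constant.

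First, choose a rel-endpoints homotopy $H\colon I\times I \to X$ with $H(t,0) = p\omega(t)$, $H(t,1) = p\sigma(t)$, and $H(0,s), H(1,s)$ constant in $s$ (equal to $p\omega(0) = p\sigma(0)$ and to $p\omega(1) = p\sigma(1)$, respectively). Since $p$ is a covering, it is in particular a Hurewicz fibration, so applying the HLP to the map $I \to Y$, $t \mapsto \omega(t)$ (viewed as a ``homotopy'' at time $s=0$), I obtain a lift $\widetilde H\colon I\times I \to Y$ of $H$ with $\widetilde H(t,0) = \omega(t)$.

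Next, I would analyze $\widetilde H$ on the three remaining sides of the square. Consider the path $s \mapsto \widetilde H(0,s)$: it covers the constant path at $p\omega(0)$, so it lies entirely in the discrete fiber $p^{-1}(p\omega(0))$; by continuity it is constant, equal to $\widetilde H(0,0) = \omega(0) = \sigma(0)$. Similarly, $s\mapsto \widetilde H(1,s)$ lies in the discrete fiber $p^{-1}(p\omega(1))$ and is constantly equal to $\omega(1)$. Now the path $t\mapsto \widetilde H(t,1)$ is a lift of $p\circ \sigma$ starting at $\sigma(0)$; by uniqueness of path lifting for coverings, it must coincide with $\sigma$. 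In particular, $\sigma(1) = \widetilde H(1,1) = \omega(1)$, so $\omega$ and $\sigma$ do share both endpoints, and $\widetilde H$ is a rel-endpoints homotopy from $\omega$ to $\sigma$.

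There is no real obstacle here: the whole argument rests on two standard facts about covering spaces, namely the HLP and unique path lifting (equivalently, that fibers are discrete), and the only care required is to verify that the lifted homotopy $\widetilde H$ has constant values on the two vertical sides of the square $I \times I$, which is precisely where discreteness of the fibers is used.
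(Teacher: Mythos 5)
Your proof is correct and follows essentially the same route as the paper's: lift the rel-endpoints homotopy via the HLP starting at $\omega$, use discreteness of fibers to force the two vertical sides of the lifted square to be constant, and then invoke unique path lifting to identify the top side with $\sigma$.
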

\begin{proof}
Let $H\colon I \times I\to X$ be a homotopy with $H_0=p\circ \omega$ and $H_1 = p\circ \sigma$ such that
$H(0,s)= p(\omega(0)) = p(\sigma(0))$ and $H(1,s) = p(\omega(1)) = p(\sigma(1))$ for each $s\in I$. Here, we use the convention $H_s(t)=H(t,s)$. We have a lift
$G$ in the commutative diagram
\[
\begin{tikzcd}[contains/.style = {draw=none,"\in" description,sloped}]
I \times \{0\} \arrow{r}{\omega} \arrow[swap]{d}
&
Y \arrow{d}{p}
\\
I\times I  \arrow{r}{H} \arrow[ur, dashed, "G"]
&
X.
\end{tikzcd}
\]
Therefore, $p\circ G=H$ with $G_0=\omega$ and $p\circ G_1=H_1=p\circ \sigma$. We thus have $p(G(0,s))=H(0,s)=p(\omega(0))
=p(\sigma(0))$ and $p(G(1,s))=H(1,s)=p(\omega(1))=p(\sigma(1))$. Hence, $G(0,s) \in p^{-1}(p(\omega(0)))$
and $G(1,s) \in p^{-1}(p(\omega(1)))$. These fibers are discrete and $G$ is continuous, so for all $s\in I$, we must have
$G(0,s)=\bar y$ and $G(1,s)=\widehat y$ for some points $\bar y,\widehat y\in Y$. Now, $G(0,0)=G_0(0)=\omega(0)$, so $\bar y=\omega(0)$. Also, $G(1,0)=G_0(1)=\omega(1)$, so $\widehat y=\omega (1)$. Note that $G_1(0)=G(0,1)=\bar y=\omega(0)=\sigma(0)$ and $p\circ G_1=p\circ \sigma$. By the unique path lifting property of $p$, we must have $G_1 = \sigma$. Therefore, $G$ gives
a homotopy $\omega =G_0\simeq_* G_1 = \sigma$. Observe that $\sigma(1)=G_1(1)=\widehat y=\omega(1)$, so $\omega$ and $\sigma$ have the same terminal points.
\end{proof}

\begin{remark}\label{rem:lifthomo2}
    It follows from the technique of the proof of Lemma~\ref{lem:lifthomo} that $\omega\simeq_{*}\sigma$ is true again if $\omega(1)=\sigma(1)$ and $p\circ \omega\simeq_{*}p\circ\sigma$.
\end{remark}

Given a covering map $p:Y\to X$, we use the definition of universal coverings to get
\[
\widetilde X = P_0(X)/[-]\ \text{ and }\ \widetilde Y = P_0(Y)/\{-\},
\]
where $[\alpha]=[\beta]$ for $\alpha,\beta\in P_0(X)$ if $\alpha(0)=\beta(0)$ and $\alpha\simeq_* \beta$, and $\{\sigma\}
=\{\tau\}$ for $\sigma,\tau \in P_0(Y)$ if $\sigma(0)=\tau(0)$ and $\sigma\simeq_* \tau$. We recall that $P_0(X)$ consists of paths in $X$ having terminal point $x_0$, and $P_0(Y)$ consists of paths in $Y$ having terminal point $y_0\in p^{-1}(x_0)$. The universal covering maps are given by $q^X([\gamma])=\gamma(0)$ and $q^Y(\{\alpha\})=\alpha(0)$. Of course, we know that $\widetilde X \cong \widetilde Y$, but we need the exact way this happens.

Define $P\colon \widetilde Y \to \widetilde X$ by $P(\{\sigma\}) = [p\circ\sigma]$. Clearly if
$\sigma\simeq_*\tau$, then $p\circ\sigma\simeq_* p\circ\tau$, so $P$ is well-defined.
Define $\theta\colon \widetilde X \to \widetilde Y$ as follows. Let $[\gamma]\in \widetilde X$. Because $\gamma(1)=x_0$, there is a unique lift $\widetilde\gamma\colon I \to Y$ to $\gamma$ ending at
$\widetilde\gamma(1)=y_0$. Define $\theta([\gamma])=\{\widetilde\gamma\}$. This map is well-defined
by Remark~\ref{rem:lifthomo2} since $\gamma \simeq_* \delta$ says that the unique 
lifts $\widetilde\gamma$
and $\widetilde \delta$ also have $\widetilde\gamma \simeq_* \widetilde\delta$ because $\widetilde\gamma(1) = y_0 =
\widetilde\delta(1)$. It is easy to check that
\[
P\circ\theta([\gamma])  = P(\{\widetilde\gamma\}) = [p\circ\widetilde\gamma]  = [\gamma] \ \text{ and } \
\theta\circ P(\{\sigma\}) = \theta([p\circ\sigma]) = \{\sigma\},
\]
where the last equality comes from the unique path lifting property of $p$. Now, recall that 
\[
\widetilde X_{n+1} = \{\mu \in \B_{n+1}(\widetilde X) \mid\, \text{supp}(\mu) \subset (q^X)^{-1}(x)\
\text{for\ some\ } x\in X\}, \ \text{ and}
\]
\[
\widetilde Y_{n+1} = \{\nu \in \B_{n+1}(\widetilde Y) \mid\, \text{supp}(\nu) \subset (q^Y)^{-1}(y)\
\text{for\ some\ } y\in Y\}.
\]
So, $\mu = \sum_i a_i [\gamma_i] \in \widetilde{X}_{n+1}$ means there exists $x\in X$ such that $q^X(\gamma_i)=\gamma_i(0)=x$ is independent of $i$, and $\nu=\sum_i b_i \{\alpha_i\} \in \widetilde{Y}_{n+1}$ means there exists $y\in Y$ such that $q^Y(\alpha_i)=\alpha_i(0)=y$ is independent of $i$.

\begin{theorem}\label{theo:mainresult1}
If $p\colon Y \to X$ is a covering map, then $\dcat_1(Y) \leq \dcat_1(X)$.
\end{theorem}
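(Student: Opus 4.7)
The plan is to convert a section $s\colon X \to \widetilde X_{n+1}$ of $q^X_{n+1}$ (where $n=\dcat_1(X)$) into a section of $q^Y_{n+1}$ in two stages: pulling $s$ back along the covering $p$, and then pushing forward along an auxiliary map $F\colon p^*\widetilde X\to\widetilde Y$ that commutes with projection to $Y$. For the first stage, the universal-property argument from the proof of Lemma~\ref{lem:dsecat1} (applied to the covering $q^X$ and the map $p\colon Y\to X$) produces a section $\bar s\colon Y\to(p^*\widetilde X)_{n+1}$ of the pullback fibration, where $p^*\widetilde X = Y\times_X \widetilde X$.

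For the second stage, I would first fix once and for all, for each $z\in p^{-1}(x_0)$, a path $\tau_z\colon I\to Y$ with $\tau_z(0)=z$ and $\tau_z(1)=y_0$; this is possible because $Y$ is path-connected. Then, for $(y,[\gamma])\in p^*\widetilde X$ (so $\gamma(0)=p(y)$ and $\gamma(1)=x_0$), I would lift $\gamma$ through $p$ starting at $y$, obtain a path $\widetilde\gamma$ in $Y$ with $\widetilde\gamma(1)=z\in p^{-1}(x_0)$, and set
\[
F(y,[\gamma]) = \{\widetilde\gamma\ast\tau_z\}\in\widetilde Y.
\]
Lemma~\ref{lem:lifthomo} ensures that $F$ is well-defined on homotopy classes: homotopic representatives of $[\gamma]$ lift, from the common starting point $y$, to paths that are homotopic rel endpoints and so share the terminal point $z$. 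Also, $q^Y\circ F$ equals the projection $\pi\colon p^*\widetilde X\to Y$ by construction.

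The hard part will be continuity of $F$. Lifting is continuous in $(y,\gamma)$, so the endpoint $z=\widetilde\gamma(1)$ is a continuous function on $p^*\widetilde X$ whose image lies in the discrete set $p^{-1}(x_0)$; hence $z$ is locally constant in $(y,[\gamma])$, and so is the prechosen $\tau_z$. Continuity of $F$ then reduces to the continuity of lifting and concatenation. Once $F$ is continuous, its pushforward $F_\ast\colon(p^*\widetilde X)_{n+1}\to\widetilde Y_{n+1}$ is continuous, and it covers the identity on $Y$ because $F$ does. The composite $\tilde s = F_\ast\circ\bar s\colon Y\to\widetilde Y_{n+1}$ is then a section of $q^Y_{n+1}$, yielding $\dcat_1(Y)\le n=\dcat_1(X)$.
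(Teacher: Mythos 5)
Your proof is correct and takes essentially the same approach as the paper's: both rely on unique path lifting to send a representative of each $[\gamma]$ to a path $\widetilde\gamma$ in $Y$ starting at the prescribed point, then concatenate with a prechosen path $\tau_z$ (the paper's $\omega_z$) from the terminal point $z\in p^{-1}(x_0)$ to $y_0$, and invoke Lemma~\ref{lem:lifthomo} for well-definedness. The only difference is cosmetic: you factor the construction through the pullback covering $p^*\widetilde X\to Y$ and then push forward along $F$, whereas the paper defines the section $\tau\colon Y\to\widetilde Y_{n+1}$ directly from the formula $\tau(y)=\sum_i a_i\{\widetilde\gamma_i\ast\omega_{z_i}\}$; your extra attention to the local constancy of $z\mapsto\tau_z$ in the continuity argument is a point the paper leaves implicit.
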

\begin{proof}
Suppose $x_0\in X$ is a basepoint of $X$. We choose $y_0\in p^{-1}(x_0)$ as a basepoint of $Y$. For each $z\in p^{-1}(x_0)$, we choose and fix a path $\omega_z\in P_0(Y)$ with the property that $\omega_z(0)=z$ and $\omega_z(1)=y_0$. Let $\dcat_1(X)=n$, and let $s\colon X \to \widetilde X_{n+1}$ be a section of $q_{n+1}^X:\widetilde{X}_{n+1}\to X$. We will now define a map $\tau\colon Y \to \widetilde Y_{n+1}$ as follows. Suppose
\[
s(x) = \sum_i a_i [\gamma_i]\ \text{ with }\ \gamma_i(0)=x\ \text{ for\ all}\  i.
\]
Let $y\in p^{-1}(x)$. Then $\gamma_i$ lifts to a unique path $\widetilde{\gamma}_i\in P(Y)$ such that $\widetilde{\gamma}_i(0)=y$. If $z_i=\widetilde{\gamma}_i(1)$, then $\widetilde{\gamma}_i\ast \omega_{z_i} \in P_0(Y)$. We define
\[
\tau(y) = \sum_i a_i \left\{\widetilde{\gamma}_i\ast\omega_{z_i}\right\}.
\]
Note that for each $i$, we have $(\widetilde{\gamma}_i\ast\omega_{z_i})(0)=\widetilde{\gamma}_i(0)=y$, so
$\text{supp}(\tau(y)) \subset (q^Y)^{-1}(y)$. Suppose $\gamma_i\simeq_* \alpha$. Then Lemma~\ref{lem:lifthomo} gives $\widetilde\gamma_i \simeq_* \widetilde\alpha$, which entails the equality $z_i=\widetilde\gamma_i(1)=\wt \alpha(1)$. Since we fixed the paths $\omega_{z}$ using only their endpoints $\{z,y_0\}$, the equality $z_i=\wt\alpha(1)$ implies $w_{z_i}=w_{\wt\alpha(1)}\in P_0(Y)$. So, we have
\[
\widetilde\gamma_i \ast \omega_{z_i} \simeq_* \wt\alpha \ast \omega_{z_i} \simeq_* \wt\alpha\ast \omega_{\wt\alpha(1)} .
\]
Therefore, $\tau$ is well-defined, and hence, it is a section of $q_{n+1}^Y:\widetilde{Y}_{n+1}\to Y$. Thus, we obtain $\dcat_1(Y) \leq n=\dcat_1(X)$.
\end{proof}

By Corollary~\ref{cor:classmap} and Theorem~\ref{theo:mainresult1}, for the covering $f\colon Y\to X$ and classifying maps $\kappa_X\colon X\to K(\pi_1(X),1)$ and $\kappa_Y\colon Y\to K(\pi_1(Y),1)$, we have the inequality $\dcat(\kappa_Y)=\dcat_1(\kappa_Y)\le\dcat_1(\kappa_X)=\dcat(\kappa_X)$. This is the analogue of the inequality $\cat_1(Y)=\cat(\kappa_Y)=\cat_1(\kappa_Y) \le \cat_1(\kappa_X)=\cat(\kappa_X)=\cat_1(X)$ from~\cite{OS}.

\begin{corollary}\label{cov:covineq}
If $f\colon Y \to X$ induces a monomorphism of fundamental groups, then
\[
\dcat_1(Y)\le\dcat_1(X).
\]
\end{corollary}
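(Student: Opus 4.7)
The plan is to combine the two key results just established: the covering-map inequality of Theorem~\ref{theo:mainresult1} and the $\pi_1$-isomorphism inequality of Corollary~\ref{cor:isoineq}. The strategy is to interpolate a covering space of $X$ between $Y$ and $X$, on which $f$ factors through a $\pi_1$-isomorphism.

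First, I would let $H = f_{*}(\pi_1(Y)) \subset \pi_1(X)$, which is well-defined as a subgroup since $f_*$ is injective, and invoke standard covering space theory to produce the covering map $p\colon \overline{X} \to X$ corresponding to the subgroup $H$, so that $p_{*}(\pi_1(\overline{X})) = H$. Because $f_{*}(\pi_1(Y)) = H = p_{*}(\pi_1(\overline{X}))$, the lifting criterion for covering spaces yields a map $\overline{f}\colon Y \to \overline{X}$ with $p \circ \overline{f} = f$ (up to basepoint choices, which we fix throughout).

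Next, I would verify that $\overline{f}$ induces an isomorphism on fundamental groups. Injectivity of $\overline{f}_{*}$ follows from injectivity of $f_{*} = p_{*}\circ\overline{f}_{*}$, and surjectivity follows from the fact that $p_{*}\circ\overline{f}_{*}(\pi_1(Y)) = f_{*}(\pi_1(Y)) = H = p_{*}(\pi_1(\overline{X}))$ together with the injectivity of $p_{*}$.

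Finally, chaining the two available inequalities gives the conclusion:
\[
\dcat_1(Y) \;\le\; \dcat_1(\overline{X}) \;\le\; \dcat_1(X),
\]
where the first inequality uses Corollary~\ref{cor:isoineq} applied to $\overline{f}$, and the second uses Theorem~\ref{theo:mainresult1} applied to the covering $p$. The main obstacle is essentially bookkeeping: verifying that the subgroup lifting produces a genuine covering of the right homotopy type when $X$ is only assumed to be a connected CW complex, but this is standard since connected CW complexes are semilocally simply connected. No new distributional argument is required beyond the two cited results.
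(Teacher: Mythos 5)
Your proof is correct and follows essentially the same route as the paper's: pass to the intermediate covering space of $X$ corresponding to the subgroup $f_*(\pi_1(Y))$, lift $f$ there to a $\pi_1$-isomorphism, and then chain Corollary~\ref{cor:isoineq} with Theorem~\ref{theo:mainresult1}. The only difference is that you spell out the verification that the lift induces an isomorphism on $\pi_1$, which the paper leaves implicit.
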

\begin{proof}
    Let $\pi= \Im(f_*\colon \pi_1(Y) \to \pi_1(X)) \subset \pi_1(X)$. We define a covering $p:\widehat{X}\to X$ with $\pi_1(\widehat{X})=\pi$. By the lifting criterion, we get a lift $\widehat{f}:Y\to \widehat{X}$ of $f$ along $p$. Note that $\widehat{f}$ induces an isomorphism of fundamental groups. Hence,
    \[
    \dcat_1(Y)\le\dcat_1(\widehat{X})\le\dcat_1(X)
    \]
    is obtained using Corollary~\ref{cor:isoineq} and Theorem~\ref{theo:mainresult1}, respectively.
\end{proof}

\section{Distributional one-category and the fundamental group}\label{sec: relationship with pi1}
We begin by noting that for any Eilenberg--Mac~Lane space $X=K(\pi,1)$, we immediately have $\dcat_1(X)\le \cat_1(X)=\cat(X)=\cd(\pi)$. Also, Corollary~\ref{cor:classmap} gives $\dcat_1(X)=\dcat(X)=\dcat(\pi)$. Moreover, if $\cat(X)=\cd(\pi)$ is finite, then $\pi$ is torsion-free and so, $\dcat(\pi)=\cd(\pi)$ holds due to~\cite{KW} (see Section~\ref{subsec: 
torsion-free} for a proof from our viewpoint), which means $\dcat_1(X)=\cd(\pi)$.

In the case when $X$ is \emph{not} an Eilenberg--Mac~Lane space and its universal cover $\widetilde{X}$ is highly connected (but \emph{not} contractible), we now obtain a similar algebraic description of $\dcat_1(X)$ and $\cat_1(X)$ under an additional assumption on $\cd(\pi_1(X))$. The following statement can be seen as a partial extension of the above result for Eilenberg--Mac~Lane spaces.

\begin{theorem}\label{theo:mainresult2}
For $X$ with $\pi_1(X)=\pi$, suppose that $\widetilde X$ is $(k-1)$-connected
and that $\cd(\pi) \leq k$. Then
$$\dcat_1(X) = \cat_1(X)=\cd(\pi).$$
\end{theorem}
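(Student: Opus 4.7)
The plan is to establish the sandwich $\cd(\pi)\le\dcat_1(X)\le\cat_1(X)\le\cd(\pi)$, from which all three equalities follow. The middle inequality is Proposition~\ref{prop:min1cat1}. The rightmost inequality combines the standard identity $\cat_1(X)=\cat(\kappa)$, where $\kappa\colon X\to K(\pi,1)$ classifies the universal cover, with $\cat(\kappa)\le\cat(K(\pi,1))=\cd(\pi)$ (by composing any lift of $1_{K(\pi,1)}$ through $\kappa$, and using the Eilenberg--Ganea identification of $\cat$ with $\cd$ for groups). So the substantive content is the leftmost inequality $\cd(\pi)\le\dcat_1(X)$.

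The key observation is that $\kappa$ is a $k$-equivalence: it induces an isomorphism on $\pi_1$ by construction, and its homotopy fiber is weakly equivalent to $\widetilde X$ (via the long exact sequence of $\kappa$'s fibration replacement, using $\pi_{\ge 2}K(\pi,1)=0$), which is $(k-1)$-connected by hypothesis. I will then use obstruction theory to produce a homotopy section $g\colon K(\pi,1)\to X$ of $\kappa$, i.e., $\kappa\circ g\simeq 1_{K(\pi,1)}$. After replacing $\kappa$ by a Hurewicz fibration with fiber weakly equivalent to $\widetilde X$, the obstructions to extending a section skeleton-by-skeleton over a CW model of $K(\pi,1)$ lie in the local-coefficient cohomology groups $H^{n+1}(K(\pi,1);\pi_n(\widetilde X))$: these vanish for $n\le k-1$ because $\widetilde X$ is $(k-1)$-connected, and for $n\ge k$ because $n+1>\cd(\pi)$, together with the fact that $\cd(\pi)$ bounds $H^*(\pi;-)$ with coefficients in any $\Z\pi$-module. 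Remark~\ref{rmk:homotopy section} then promotes the homotopy section to an honest one.

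Given such a $g$, I chain: by Corollary~\ref{cor:classmap}, $\dcat_1(X)=\dcat(\kappa)$; for any lift $\widetilde\kappa\colon X\to P_0(K(\pi,1))_{n+1}$ of $\kappa$, the composite $\widetilde\kappa\circ g$ is a lift of $\kappa\circ g$, whence $\dcat(\kappa\circ g)\le\dcat(\kappa)$; and $\dcat(\kappa\circ g)=\dcat(1_{K(\pi,1)})=\dcat(\pi)$ by homotopy invariance. Since $\cd(\pi)\le k$ is finite, $\pi$ is torsion-free, so the identity $\dcat(\pi)=\cd(\pi)$ from~\cite{KW} (revisited in Section~\ref{dcat1 of groups}) applies, yielding $\cd(\pi)\le\dcat_1(X)$ and closing the sandwich. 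The main obstacle I anticipate is the obstruction-theoretic construction of $g$: while the cohomology vanishes at every stage regardless of the CW model chosen for $K(\pi,1)$, one must resist the temptation to require a CW model of dimension exactly $\cd(\pi)$ (which may fail when $\cd(\pi)=2$, by the open Eilenberg--Ganea conjecture). Phrasing the argument via cohomological rather than geometric dimension sidesteps this issue.
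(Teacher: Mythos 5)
Your proposal is correct and follows essentially the same approach as the paper's proof: both construct a homotopy section $g\colon K(\pi,1)\to X$ of the classifying map $\kappa$ via obstruction theory (the obstructions vanish because the fiber $\simeq\widetilde X$ is $(k-1)$-connected and $\cd(\pi)\le k$), and both then invoke the Knudsen--Weinberger equality $\dcat(\pi)=\cd(\pi)$ for torsion-free $\pi$. The only cosmetic difference is in assembling the final chain of inequalities: the paper uses the retract and $\pi_1$-isomorphism results (Proposition~\ref{prop:retracts}, Corollary~\ref{cor:isoineq}) to sandwich $\dcat_1(X)$ between $\dcat_1(K)$ and itself, whereas you compose lifts directly to compare $\dcat(\kappa\circ g)$ with $\dcat(\kappa)$.
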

\begin{proof}
The Eilenberg--Mac~Lane space $K=K(\pi,1)$ may be constructed by adding cells
to $X$ of dimension greater than or equal to $k+1$. Therefore, we have
$\pi_j(K,X)=0$ for $j \leq k$. If we convert $X \hookrightarrow K$ to a
fibration with fiber $F$, then $\pi_s(F) = \pi_{s+1}(K,X)$, so that $\pi_s(F)=0$
for $s \leq k-1$. Since the fiber is $(k-1)$-connected, all possible obstructions to a section of $X \to K$ live in $H^{t+1}(K;\pi_t(F))=H^{t+1}(\pi;\pi_t(F))$ for $t\geq k$. In particular, the primary obstruction lives in $H^{k+1}(\pi;\pi_k(F))$. But $\cd(\pi)\le k$ implies $H^{t+1}(\pi;\pi_t(F))=0$ for all $t\ge k$ and hence, a section $K \to X$ exists. This makes $K$ into a homotopy retract of $X$. Then, we have the following inequality:
\[
\cd(\pi)=\dcat(K)=\dcat_1(K)\le\dcat_1(X)\le\dcat_1(K)\le\cd(\pi).
\]
The first equality is true because $\pi$ is torsion-free, the second one comes from Corollary~\ref{cor:classmap}, the third inequality follows from Proposition~\ref{prop:retracts}, the fourth one is by Corollary~\ref{cor:isoineq}, and the last one is standard for Eilenberg--Mac~Lane spaces. Analogous results similarly give $\cd(\pi)=\dcat_1(X)\le\cat_1(X)\le\cd(\pi)$.
\end{proof}

\begin{remark}\label{rmk:dcat vs dcat-pi}
    For any $X$ with $\pi_1(X)=\pi$, Corollary~\ref{cor:classmap} implies the inequality $\dcat_1(X)\le\dcat_1(\pi)\le \cd(\pi)$. Therefore, if $X$ and $\pi$ satisfy the hypotheses of Theorem~\ref{theo:mainresult2}, then we have $\dcat_1(X)=\dcat_1(\pi)$, so the first inequality is sharp. 
    
    On the other hand, suppose $X$ is such that $\dim(X)<\cd(\pi)$ and $\cd(\pi)$ 
is finite (i.e., $\pi$ is torsion-free), then we have
    \[
    \dcat_1(X) \le \dcat(X) \le \cat(X)\le \dim(X) < \cd(\pi) = \dcat(\pi)= \dcat_1(\pi).
    \]
    Here, $\cat(X)\le\dim(X)$ is well-known (see, for example,~\cite[Theorem 1.7]{CLOT}).
    Hence, $\dcat_1(X)<\dcat_1(\pi)$ is possible, and since every finitely presentable torsion-free discrete group $\pi$ can be realized as the fundamental group of a closed $4$-manifold $X$, the gap between $\dcat_1(X)$ and $\dcat_1(\pi)$ can be  made arbitrarily large. For an explicit example, take $X=SP^n(M_g)$, the $n$-th symmetric product of a closed orientable surface $M_g$ of genus $g>n\ge 2$. Then we have $\pi=\Z^{2g}$, so that $\dim(X)=2n<2g=\cd(\pi)$. To see an arbitrarily large gap for closed $4$-manifolds, take $X=SP^2(M_{k+2})$ and vary the integers $k\ge 1$.
\end{remark}

Since $\widetilde{X}$ is always simply connected, taking $k=2$ in Theorem~\ref{theo:mainresult2} implies the following.

\begin{corollary}\label{cor:cd1}
For $X$ with $\pi_1(X)=\pi$, suppose $\cd(\pi)\le 2$. Then we have that $\dcat_1(X)=\cat_1(X)=\cd(\pi)$.
\end{corollary}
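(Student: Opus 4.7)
The plan is to deduce this as an immediate specialization of Theorem~\ref{theo:mainresult2}. The hypothesis of that theorem requires $\widetilde X$ to be $(k-1)$-connected and $\cd(\pi) \leq k$ for some integer $k$, and yields the conclusion $\dcat_1(X) = \cat_1(X) = \cd(\pi)$. Here we want to apply it with $k=2$.

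First I would observe that for any space $X$, the universal cover $\widetilde X$ is by definition simply connected, that is, $1$-connected. Since $k-1 = 1$ when $k=2$, the connectivity hypothesis of Theorem~\ref{theo:mainresult2} is automatically satisfied. The remaining assumption is $\cd(\pi) \leq k = 2$, which is precisely the hypothesis we are given. Therefore, Theorem~\ref{theo:mainresult2} applies directly and gives $\dcat_1(X) = \cat_1(X) = \cd(\pi)$.

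There is essentially no obstacle here: the corollary is a one-line consequence of the theorem, with the only content being the observation that universal covers are always simply connected, so the connectivity hypothesis for $k=2$ is free. No separate argument is needed.
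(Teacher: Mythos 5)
Your proof is correct and matches the paper's own reasoning exactly: the paper likewise notes that $\widetilde X$ is always simply connected, so taking $k=2$ in Theorem~\ref{theo:mainresult2} immediately yields the corollary.
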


So, the strict inequalities $\dcat_1(X)<\cat_1(X)$ and $\cat_1(X)<\cd(\pi)$ are possible only when $\cd(\pi)>2$.

\begin{ex}\label{ex:smallcd}
    If $\pi \ne \{1\}$ is free, then $\cd(\pi)=1$ (see~\cite[Example I.4.3]{Br}) and hence, $\dcat_1(X)=\cat_1(X)=1$ by 
    Corollary~\ref{cor:cd1}. Similarly, if the fundamental group of $X$ is a surface group, then $\dcat_1(X)=\cat_1(X)=2$.
\end{ex}

\begin{ex}\label{exam:onerelator}
If $\pi = \langle a_1,\ldots, a_n\mid r\rangle$ is a non-free group with a single relator $r$ that is not a power of another element (a so-called \emph{proper power}), then it is known by Lyndon's Identity Theorem~\cite{Ly} that $\cd(\pi)=2$. Therefore, any $X$ with 
such a fundamental group has $\dcat_1(X)=2$ by Corollary~\ref{cor:cd1}. Now, a cohomological lower bound for $\dcat$ is given by rational cup-length due to~\cite[Section 4.2]{DJ}, 
so if we take $Y=X \times \C P^n$ say, then $\dcat_1(Y)=2$ while $\dcat(Y) \geq n$. Therefore, the gap between $\dcat_1$ and $\dcat$ can
be arbitrarily large (\emph{cf.} Proposition~\ref{prop:min}).
\end{ex}

We now continue with more corollaries of Theorem~\ref{theo:mainresult2}.

\begin{corollary}\label{cor:cd2}
For $X$ with $\pi_1(X)=\pi$, suppose that $\widetilde X$ is $(k-1)$-connected and that $\cd(\pi) \leq k$. If $Y$ is a finite cover of $X$, then
\[
\dcat_1(Y)=\dcat_1(X)=\cd(\pi)=\cat_1(X)=\cat_1(Y).
\]
\end{corollary}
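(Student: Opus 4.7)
The plan is to apply Theorem~\ref{theo:mainresult2} a second time, now directly to $Y$ and its fundamental group, and to combine the two applications. Since Theorem~\ref{theo:mainresult2} applied to $X$ already gives $\dcat_1(X) = \cat_1(X) = \cd(\pi)$, it suffices to establish that $\dcat_1(Y) = \cat_1(Y)$ equals this common value.

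First, I would observe that since $p\colon Y \to X$ is a covering map, the universal covers coincide, $\widetilde{Y} = \widetilde{X}$, so $\widetilde{Y}$ is automatically $(k-1)$-connected. Second, because the cover $p$ is \emph{finite}, the subgroup $\pi' := \pi_1(Y) \subset \pi$ has finite index. The hypothesis $\cd(\pi) \leq k < \infty$ forces $\pi$ to be torsion-free, so I would invoke Serre's theorem on cohomological dimension of finite-index subgroups of torsion-free groups with finite $\cd$ to conclude $\cd(\pi') = \cd(\pi) \leq k$.

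With both hypotheses of Theorem~\ref{theo:mainresult2} now verified for the pair $(Y,\pi')$, that theorem yields $\dcat_1(Y) = \cat_1(Y) = \cd(\pi')$. Chaining this with $\cd(\pi') = \cd(\pi)$ and with the equalities already known for $X$ produces the full string
\[
\dcat_1(Y) = \cat_1(Y) = \cd(\pi') = \cd(\pi) = \cat_1(X) = \dcat_1(X),
\]
as desired.

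The only non-formal input beyond Theorem~\ref{theo:mainresult2} is Serre's theorem, and this is genuinely where finiteness of the cover is used: for an infinite cover one would only have the trivial inequality $\cd(\pi') \leq \cd(\pi)$, which could be strict and would not yield equality of all five quantities. Everything else is bookkeeping about covering spaces and fundamental groups, so I expect no further technical obstacle.
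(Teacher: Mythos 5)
Your proof is correct and takes essentially the same approach as the paper: observe that $\widetilde{Y} = \widetilde{X}$ is $(k-1)$-connected, use Serre's theorem on the finite-index subgroup $\pi_1(Y)$ of the torsion-free group $\pi$ to get $\cd(\pi_1(Y)) = \cd(\pi) \leq k$, and then apply Theorem~\ref{theo:mainresult2} to both $X$ and $Y$.
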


\begin{proof}
Note that $\pi_1(Y)$ is a finite index subgroup of $\pi$, which is torsion-free. Hence, Serre's theorem (see~\cite[Theorem VIII.3.1]{Br}) implies that $\cd(\pi_1(Y))=\cd(\pi)\le k$. We now apply Theorem~\ref{theo:mainresult2} to $Y$ and $X$ to get the desired equality.
\end{proof}

In particular, we have several non-trivial examples of finite coverings for which the inequality in Theorem~\ref{theo:mainresult1} is saturated.

In the classical setting, $\cat_1(X\times Z) \le \cat_1(X)+\cat_1(Z)$ holds for all $X$ and $Z$, yielding $\cat_1(X\times Z)=\cat_1(X)$ for simply connected spaces $Z$. We now show an analogous phenomenon for $\dcat_1$ under additional assumptions on $X$ and $Z$. This is interesting because in general, the inequality $\dcat_1(X\times Z) \le \dcat_1(X)+\dcat_1(Z)$ is \emph{not} true, even if $X=Z$, as we will show in Example~\ref{exam:pi1Z2}.

\begin{corollary}\label{cor:cd3}
    For $X$ with $\pi_1(X)=\pi$, suppose that $\widetilde X$ is $(k-1)$-connected for $k\ge 2$ and that $\cd(\pi) \leq k$. If $Z$ is any $(k-1)$-connected space, then
    \[
    \dcat_1(X\times Z)=\cat_1(X\times Z)=\cd(\pi)=\cat_1(X)=\dcat_1(X).
    \]
\end{corollary}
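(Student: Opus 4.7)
The plan is to reduce the statement to a direct application of Theorem~\ref{theo:mainresult2} applied to both $X \times Z$ and $X$ separately. Since $k \geq 2$ and $Z$ is $(k-1)$-connected, $Z$ is in particular simply connected, so $\pi_1(X\times Z) \cong \pi_1(X)\times \pi_1(Z) \cong \pi$. The universal cover of $X\times Z$ is then $\widetilde X \times Z$ (as $Z$ is its own universal cover), and this product is $(k-1)$-connected because both factors are $(k-1)$-connected. Together with the hypothesis $\cd(\pi)\leq k$, these observations show that the pair $(X\times Z,\pi)$ satisfies the hypotheses of Theorem~\ref{theo:mainresult2}.

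Applying Theorem~\ref{theo:mainresult2} to $X\times Z$ yields
\[
\dcat_1(X\times Z) \;=\; \cat_1(X\times Z) \;=\; \cd(\pi),
\]
and applying it a second time to $X$ (whose hypotheses are assumed outright) gives
\[
\dcat_1(X) \;=\; \cat_1(X) \;=\; \cd(\pi).
\]
Concatenating these two chains of equalities produces the full string of equalities claimed.

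There is really no main obstacle here; the work has already been done in Theorem~\ref{theo:mainresult2}. The only small verifications are the standard facts that the product of an $(k-1)$-connected space with a $(k-1)$-connected space is again $(k-1)$-connected (which follows from Hurewicz, or directly from $\pi_j(A\times B) = \pi_j(A)\oplus \pi_j(B)$), and that the universal cover of a product is the product of universal covers when one of the factors is simply connected. Both are standard, so the proof is essentially a bookkeeping exercise assembling these pieces.
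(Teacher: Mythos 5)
Your argument is exactly the paper's: identify $\pi_1(X\times Z)\cong\pi$ and $\widetilde{X\times Z}=\widetilde X\times Z$ (using that $Z$ is simply connected since $k\ge 2$), note the product universal cover is $(k-1)$-connected, and invoke Theorem~\ref{theo:mainresult2} for both $X\times Z$ and $X$. Correct, and identical in substance to the paper's proof.
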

\begin{proof}
    Since $Z$ is simply connected, $\pi_1(X\times Z)=\pi$ and $\widetilde{X\times Z}=\widetilde{X}\times Z$. The latter space is $(k-1)$-connected by hypothesis. Therefore, Theorem~\ref{theo:mainresult2} applied to $X\times Z$ and $X$ gives the desired equality.
\end{proof}

In particular, when $\cd(\pi)\le 2$, we have $\dcat_1(X\times S^n)=\cd(\pi)=\dcat_1(X)$ for all $n\ge 2$.

We now see another rigidity result for distributional one-category (also see Corollary~\ref{cor:kunivcover}).

\begin{proposition}\label{prop: trivial}
    If $X$ has $\pi_1(X)=\pi$ finite, then $\dcat_1(X)\le |\pi|-1$.
\end{proposition}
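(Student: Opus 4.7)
The plan is to construct an explicit section of the fibration $q^X_{|\pi|}\colon \widetilde{X}_{|\pi|}\to X$ using the uniform distribution on fibers of the universal cover, which will immediately yield $\dcat_1(X)\le |\pi|-1$.

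Since $\pi_1(X)=\pi$ is finite, the universal covering map $q^X\colon\widetilde{X}\to X$ has a discrete fiber of cardinality exactly $|\pi|$ over every point of $X$. The idea is to define $s\colon X\to\widetilde{X}_{|\pi|}$ by
\[
s(x) \;=\; \frac{1}{|\pi|}\sum_{\widetilde{x}\in (q^X)^{-1}(x)}\delta_{\widetilde{x}},
\]
that is, the uniform probability measure supported on the entire fiber over $x$. By construction, $\mathrm{supp}(s(x))\subset (q^X)^{-1}(x)$ and $|\mathrm{supp}(s(x))|=|\pi|$, so $s(x)\in\widetilde{X}_{|\pi|}$ and $q^X_{|\pi|}\circ s=1_X$.

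The one thing to verify is continuity of $s$. Given $x_0\in X$, choose an evenly covered neighborhood $U$ of $x_0$, so that $(q^X)^{-1}(U)=\bigsqcup_{g\in\pi}U_g$ with each restriction $q^X|_{U_g}\colon U_g\to U$ a homeomorphism; let $\phi_g\colon U\to U_g$ denote its inverse. Then on $U$ we have
\[
s(x)=\frac{1}{|\pi|}\sum_{g\in\pi}\delta_{\phi_g(x)},
\]
and continuity of $s|_U$ follows from continuity of each $\phi_g$ together with continuity of the Dirac-measure embedding $\widetilde{X}\hookrightarrow\mathcal{B}_{|\pi|}(\widetilde{X})$ and of the (fixed-coefficient) formal sum in the L\'evy--Prokhorov (or Kallel--Karoui) topology on $\widetilde{X}_{|\pi|}$. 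Since $s|_U$ agrees with any such local expression (both being the uniform measure on the fiber), $s$ is globally well-defined, and continuous everywhere. Therefore $q^X_{|\pi|}$ admits a section, so $\dcat_1(X)\le |\pi|-1$.

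There is no real obstacle here beyond checking that the uniform-measure construction is a bona fide continuous map into $\widetilde{X}_{|\pi|}$; the essential point, which makes the whole argument work and which would fail if $\pi$ were infinite, is that the fiber has constant finite cardinality and hence the uniform measure varies continuously in $x$ through the local trivializations of the covering.
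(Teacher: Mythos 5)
Your proof is correct, but it takes a genuinely different and more self-contained route than the paper's proof of this particular proposition. The paper proves \propref{prop: trivial} by importing the bound $\dcat(K(\pi,1))\le|\pi|-1$ from \cite[Theorem 7.2]{KW}, combining it with the identification $\dcat_1(K(\pi,1))=\dcat(K(\pi,1))$ (\corref{cor:classmap}), and then using the $\pi_1$-isomorphism functoriality $\dcat_1(X)\le\dcat_1(K(\pi,1))$ (\corref{cor:isoineq}). You instead construct the section of $q^X_{|\pi|}$ explicitly as the uniform (barycentric) measure on each fiber and verify continuity via even coverings together with continuity of $(z_1,\dots,z_{|\pi|})\mapsto\frac{1}{|\pi|}\sum_i\delta_{z_i}$ in the L\'evy--Prokhorov topology. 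This is sound: the fiber has constant cardinality $|\pi|$ on a connected base, the local formula $s|_U(x)=\frac{1}{|\pi|}\sum_{g}\delta_{\phi_g(x)}$ is continuous, and overlapping evenly covered neighborhoods give the same measure, so the local pieces glue. In fact your argument is, in substance, the ``direct proof'' the paper itself promises immediately after the proposition and delivers in \corref{cor:kunivcover} (via \propref{prop:kcover} applied to the universal covering): the section $\tau(x)=\frac{1}{|\pi|}\sum_j\widetilde x_j$ there is exactly your uniform measure. The trade-off is that the paper's proof of \propref{prop: trivial} is shorter modulo the imported \cite{KW} result, whereas yours is elementary and self-contained, and it makes the role of the finite discrete fiber completely transparent --- which is precisely the discrete-flavored perspective the paper advocates.
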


\begin{proof}
    Due to~\cite[Theorem 7.2]{KW}, we have $\dcat(K(\pi,1))\le |\pi|-1$. Because $X\to K(\pi,1)$ is an isomorphism on fundamental groups, Corollaries~\ref{cor:classmap} and~\ref{cor:isoineq} imply
    \[
    \dcat_1(X)\le \dcat_1(K(\pi,1))=\dcat(K(\pi,1))\le |\pi|-1.
    \]
\end{proof}
In Corollary~\ref{cor:Kpifinite}, without using the result on $\dcat(K(\pi,1))$, we will give a direct proof of the inequality $\dcat_1(K(\pi,1)) \leq |\pi| - 1$. 

Taking $\pi=\Z_2$ in Proposition~\ref{prop: trivial}, we obtain the following.

\begin{corollary}\label{cor:dcat1=1}
If $X$ has $\pi_1(X)=\Z_2$, then $\dcat_1(X)=1$.
\end{corollary}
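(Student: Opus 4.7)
The plan is to observe that this follows immediately by sandwiching $\dcat_1(X)$ between a trivial lower bound and the upper bound supplied by Proposition~\ref{prop: trivial}.

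First I would establish the lower bound $\dcat_1(X) \geq 1$. Since $\pi_1(X) = \Z_2$ is non-trivial, $X$ is not simply connected. As noted at the beginning of Section~\ref{subsec: basic properties}, $\dcat_1(X) = 0$ forces $X$ to be simply connected (the usual fundamental group obstruction to sectioning the universal covering applies equally in the distributional setting). Hence $\dcat_1(X) \geq 1$.

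Next I would apply Proposition~\ref{prop: trivial} with $\pi = \Z_2$ to get the upper bound
\[
\dcat_1(X) \leq |\Z_2| - 1 = 1.
\]
Combining the two inequalities yields $\dcat_1(X) = 1$, completing the proof. There is no real obstacle here; the corollary is essentially a direct specialization of Proposition~\ref{prop: trivial}, the content of which (via~\cite[Theorem 7.2]{KW} and Corollaries~\ref{cor:classmap} and~\ref{cor:isoineq}) has already been developed.
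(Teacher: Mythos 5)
Your proof is correct and is essentially identical to the paper's: the paper presents this corollary as the specialization of Proposition~\ref{prop: trivial} to $\pi=\Z_2$, and the lower bound $\dcat_1(X)\geq 1$ for non-simply-connected $X$ was established at the start of Section~\ref{subsec: basic properties}, exactly as you invoke it.
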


In stark contrast, $\cat_1(X)$ need not have a constant value if $\pi_1(X)=\Z_2$. For example, $\cat_1(\R P^\infty)=\cd(\Z_2)$ is infinite but $\cat_1(\R P^n)=n$ for each $n$ since $\R P^n$ is an essential $n$-manifold (see Section~\ref{sec:essential}). In light of this, we conclude that, surprisingly, the fundamental group of a space has a much stronger influence on its $\dcat_1$ value than it has on its $\cat_1$ value.

\begin{remark}
    For any simply connected space $Y$, $\dcat_1(X\times Y)=1=\dcat_1(X)$ if $\pi_1(X)=\Z_2$. Since $Y$ is a retract of $X\times Y$, we also have $\dcat(X\times Y)\ge \dcat(Y)$. Taking $Y=\C P^n$ for any $n$, we conclude that $\dcat(X\times \C P^n)\ge \dcat(\C P^n)=n$, see~\cite[Section 6.1]{DJ}. Hence, we have another example showing that the gap between $\dcat_1(Z)$ and $\dcat(Z)$ can be arbitrarily large (\emph{cf.} Example~\ref{exam:onerelator}).
\end{remark}

\section{Distributional (one-)category of discrete groups}\label{dcat1 of groups}

In this section, we study $\dcat_1(\pi)$ for discrete groups $\pi$. Using our new invariant, we give alternative (and, we hope, illuminating) proofs of the two key results of~\cite{KW} on the distributional category of discrete groups: a universal upper bound for $\dcat$ of finite groups, and the equivalence of the $\dcat$ of torsion-free groups with their cohomological dimension.

\subsection{The case of finite fundamental groups}\label{sec:finitefund}
We begin this section by noting a lower bound for the distributional (one-)category of finite groups. If $\pi$ is a finite group and $P$ is any $p$-subgroup\hspace{0.3mm}\footnote{\hspace{0.3mm}We recall that a finite group $\pi$ is called a $p$-group for a prime $p$ if $|\pi|=p^s$ for some $s\ge 1$.} of $\pi$, then the inequality
\[
\dcat(\pi)=\mathsf{acat}(\pi)\ge |P|-1
\]
was obtained in~\cite[Proposition 4.3]{KW2}. The original proof is difficult since it also gives a better bound $\mathsf{acat}(\pi)\ge 2|P|-1$ for non-self-normalizing $p$-subgroups $P$. For any group $\pi$, we have $\dcat_1(\pi)=\dcat(\pi)$ by Corollary~\ref{cor:classmap}. So, we have $\dcat_1(\pi)\ge |P|-1$. 
Moreover, the upper bound 
$$\dcat(\pi) \leq |\pi| - 1$$
was also proved in~\cite{KW}, and we shall extend this result in Corollary 
\ref{cor:kunivcover} below.

In~\cite[Proposition 6.6]{KW}, an upper bound was given for the analog category of a space $X$ in terms of that of a finite
covering space $Y$. This formula also holds in the case of $\dcat_1$, and the proof is simplified (and
formulaic) due to the fact that our fibers involve probability measures on finite spaces. 

\begin{proposition}\label{prop:kcover}
Suppose $f\colon Y \to X$ is a finite cover with $k$-sheets. Then
$$\dcat_1(X) \leq k(\dcat_1(Y)+1) - 1.$$
\end{proposition}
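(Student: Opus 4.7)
The plan is to build a section of $q^X_{k(n+1)}\colon \widetilde X_{k(n+1)}\to X$ from a given section of $q^Y_{n+1}$ by averaging over the sheets of $f$. Set $n=\dcat_1(Y)$ and let $s\colon Y\to \widetilde Y_{n+1}$ be a section of $q^Y_{n+1}$. Since $f\colon Y\to X$ is a covering, the universal cover of $X$ and of $Y$ coincide (up to canonical identification): both $q^X$ and $q^Y$ factor the same simply connected covering. I will identify $\widetilde X=\widetilde Y$ so that $q^X=f\circ q^Y$; in particular, for each $x\in X$ we have
\[
(q^X)^{-1}(x)=\bigsqcup_{y\in f^{-1}(x)} (q^Y)^{-1}(y).
\]

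The core construction is the map $\tau\colon X\to \widetilde X_{k(n+1)}$ defined by
\[
\tau(x)=\frac{1}{k}\sum_{y\in f^{-1}(x)} s(y),
\]
where each $s(y)$ is viewed as a measure on $\widetilde X=\widetilde Y$. Since $s(y)\in \widetilde Y_{n+1}$ is a probability measure supported on at most $n+1$ points of $(q^Y)^{-1}(y)\subset (q^X)^{-1}(x)$ and $|f^{-1}(x)|=k$, the measure $\tau(x)$ is a probability measure on $\widetilde X$ with $|\mathrm{supp}(\tau(x))|\le k(n+1)$ and $\mathrm{supp}(\tau(x))\subset (q^X)^{-1}(x)$. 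Hence $\tau(x)\in \widetilde X_{k(n+1)}$ and $q^X_{k(n+1)}(\tau(x))=x$, i.e., $\tau$ is a set-theoretic section of $q^X_{k(n+1)}$.

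The main technical point will be continuity of $\tau$. Around each $x_0\in X$ choose an evenly covered neighborhood $U$ so that $f^{-1}(U)=\bigsqcup_{i=1}^k U_i$ with each $f|_{U_i}\colon U_i\to U$ a homeomorphism. On $U$ we then have continuous local sections $\sigma_i\colon U\to U_i$ of $f$, and $\tau(x)=\frac{1}{k}\sum_{i=1}^k s(\sigma_i(x))$ is continuous as a finite convex combination of continuous maps into the space of measures (using the L\'evy--Prokhorov metric on $\B_{k(n+1)}(\widetilde X)$, or the corresponding topology of \cite{KW}). The key observation, which removes any dependence on labelings, is that the formula for $\tau$ is symmetric in the choice of ordering of $f^{-1}(x)$, so the different local formulas on overlapping evenly covered neighborhoods agree, and $\tau$ is globally well-defined and continuous.

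The main potential obstacle I anticipate is the continuity verification in the chosen topology on $\widetilde X_{k(n+1)}$; once one checks that the averaging operator $\B_{n+1}\times \cdots \times \B_{n+1}\to \B_{k(n+1)}$ sending $(\mu_1,\ldots,\mu_k)\mapsto \frac{1}{k}\sum \mu_i$ is continuous (a standard fact, since both the L\'evy--Prokhorov and \cite{KW} topologies make convex combinations continuous), the rest is formal. Concluding, $\tau$ is a section of $q^X_{k(n+1)}$, so $\dcat_1(X)\le k(n+1)-1 = k(\dcat_1(Y)+1)-1$.
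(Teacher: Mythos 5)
Your proof is correct and follows essentially the same route as the paper: both define the section $\tau$ by averaging the given section $s$ over the $k$ sheets of $f$, using the canonical identification of $\widetilde X$ with $\widetilde Y$ (the paper makes this explicit via the homeomorphism $P\colon \widetilde Y\to\widetilde X$) so that each $s(y)$ becomes a measure in the fiber $(q^X)^{-1}(x)$. The only difference is emphasis: you devote more attention to the continuity verification via evenly covered neighborhoods and the symmetry of the averaging formula, while the paper spells out the support and total-mass computations in coordinates; both are the same construction.
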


\begin{proof}
Let $\dcat_1(Y)=n$ so that there is a section $s\colon Y \to \widetilde Y_{n+1}$
of the fibration $q^Y_{n+1} \colon \widetilde Y_{n+1} \to Y$, where $q^Y\colon
\widetilde Y \to Y$ is the universal covering. 
Suppose $P\colon \Wi{Y}\to \Wi{X}$ is the homeomorphism induced by $f$ on the universal covers (\emph{cf.} Section~\ref{subsec: behavior covering}). Then we have the following diagram:
\begin{equation}\label{eq: diag}
\xymatrixcolsep{12mm}
\xymatrixrowsep{12mm}
  \xymatrix{
\widetilde Y_{n+1} \ar[r]^-{P_{n+1}} \ar[d]^-{q^Y_{n+1}} &
\widetilde X_{n+1} \ar[r]^-\varepsilon \ar[d]^-{q^X_{n+1}} &
\widetilde X_{k(n+1)} \ar[dl]^-{q^X_{k(n+1)}} \\
Y \ar[r]^-f \ar@/^1pc/[u]^-s & X,
}  
\end{equation}
where $\varepsilon$ is the inclusion. Let $f^{-1}(x) = \{y_1,\ldots, y_k\}$, and denote the image of the section $s$ on each $y_j$ by
\[
s(y_j) = \sum_{r=1}^{n+1} a_{rj} z_{rj},
\]
where $q^Y(z_{rj})=y_j$ for all $1\le r\le n+1$. We define a map $\tau\colon X\to \Wi{X}_{k(n+1)}$ by 
\begin{align*}
\tau(x) = 
&
\frac{1}{k} \sum_{j=1}^k (\varepsilon \circ P_{n+1}\circ s)(y_j) 
\\
= 
&
\frac{1}{k} \sum_{j=1}^k \varepsilon \left(P_{n+1}\left(\sum_{r=1}^{n+1} a_{rj} z_{rj}\right)\right)
\\
=
&
\frac{1}{k} \sum_{j=1}^k \sum_{r=1}^{n+1} a_{rj} P(z_{rj}),
\end{align*}
where the last equality holds since $\varepsilon$ is the inclusion. Now, we have
\[
q^X_{k(n+1)}(\tau(x))=q^X \circ P(z_{rj})=f\circ q^Y(z_{rj})=f(y_j)=x.
\]
Furthermore, note that
\[
\frac{1}{k} \sum_{j=1}^k \sum_{r=1}^{n+1} a_{rj}
= \frac{1}{k} \sum_{j=1}^k (1) = 1,
\]
so $\tau(x)\in \widetilde X_{k(n+1)}$ indeed. Hence, $\tau\colon X\to \Wi{X}_{k(n+1)}$ is a section of the fibration $q^X_{k(n+1)}\colon \Wi{X}_{k(n+1)}\to X$. Therefore, $\dcat_1(X) \leq k(n+1) - 1$.
\end{proof}

We then obtain an interesting bound for \emph{any} space $X$ where $\pi_1(X)$ is 
a finite group. We record this as follows.

\begin{corollary}\label{cor:kunivcover}
If $X$ has $\pi_1(X)=\pi$ finite with universal cover $q\colon \widetilde X \to X$, then
\[
\dcat_1(X) \leq |\pi| - 1,
\]
and the section $\tau\colon X \to \widetilde X_{|\pi|}$ is given by
\[
\tau(x) = \frac{1}{k} \Wi x_1 + \cdots + \frac{1}{k} \Wi x_{|\pi|},
\]
where $\{\Wi x_1, \ldots,\Wi x_{|\pi|}\} = q^{-1}(x)$.
\end{corollary}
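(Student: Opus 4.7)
The plan is to deduce this corollary as a direct special case of Proposition~\ref{prop:kcover}, taking $Y = \widetilde X$ and $f = q$. First I would observe that since $\widetilde X$ is simply connected, we have $\dcat_1(\widetilde X) = 0$, as noted in the discussion at the beginning of Section~\ref{subsec: basic properties}. Moreover, the universal covering $q\colon \widetilde X \to X$ is a finite cover with exactly $k = |\pi|$ sheets, since the fiber over any $x \in X$ is in bijection with $\pi_1(X) = \pi$. Plugging $n = 0$ and $k = |\pi|$ into the conclusion of Proposition~\ref{prop:kcover} immediately yields
\[
\dcat_1(X) \leq |\pi|(0+1) - 1 = |\pi| - 1.
\]

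Next, I would extract the explicit formula for $\tau$ by tracing through the construction in the proof of Proposition~\ref{prop:kcover}. When $Y = \widetilde X$, the universal cover of $\widetilde X$ is $\widetilde X$ itself, and $\widetilde X_1 = \widetilde X$ (each probability measure supported on at most one point is a Dirac measure). Thus the section $s\colon \widetilde X \to \widetilde{\widetilde X}_1 = \widetilde X$ may be taken to be the identity map, so that for each preimage point $\widetilde x_j \in q^{-1}(x)$ one has $s(\widetilde x_j) = \widetilde x_j$ with coefficient $a_{1j} = 1$. Similarly, the homeomorphism $P$ between universal covers can be chosen as the identity on $\widetilde X$. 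Substituting into the formula
\[
\tau(x) = \frac{1}{k}\sum_{j=1}^{k}\sum_{r=1}^{n+1} a_{rj}\, P(z_{rj})
\]
from the proof of Proposition~\ref{prop:kcover} collapses the inner sum and gives
\[
\tau(x) = \frac{1}{k}\sum_{j=1}^{|\pi|} \widetilde x_j,
\]
which is exactly the stated section.

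There is essentially no obstacle here: the argument is a direct specialization, and the only mild point is verifying that the formula for $\tau$ indeed lands in $\widetilde X_{|\pi|}$ and is a genuine section of $q^X_{|\pi|}$. Both are immediate, since all $\widetilde x_j$ lie in the common fiber $q^{-1}(x)$, so $\supp(\tau(x)) \subset q^{-1}(x)$, and the coefficients $1/k$ sum to one. Hence $\tau$ is continuous (as $x$ varies, the preimages $\widetilde x_j$ vary continuously in $\widetilde X$, and the formal sum with fixed coefficients is continuous into $\widetilde X_{|\pi|}$ in the L\'evy--Prokhorov topology), completing the proof.
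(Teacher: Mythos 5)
Your proof is correct and follows essentially the same route as the paper: both apply Proposition~\ref{prop:kcover} to the universal covering with $n = \dcat_1(\widetilde X) = 0$ and $k = |\pi|$, then read off the explicit form of $\tau$ by specializing the construction there with $s = 1_{\widetilde X}$ and $P = 1_{\widetilde X}$. The brief checks at the end (support and continuity of $\tau$) are sound but are already guaranteed by Proposition~\ref{prop:kcover}, so they are not strictly needed here.
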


\begin{proof}
We know that in general, $\dcat_1(Y)=0$ if and only if $Y$ is simply connected. So, applying Proposition~\ref{prop:kcover} to the universal covering $\Wi{X}$ yields $\dcat_1(X) \leq |\pi| - 1$.

Now, we only need to see the form of the section $\tau$. But because $\dcat_1(\widetilde X)=0$, the diagram in~\eqref{eq: diag} reduces to
$$
\xymatrixcolsep{15mm}
\xymatrixrowsep{12mm}
\xymatrix{
\widetilde X \ar[r]^-{P_1=1_{\widetilde X}} \ar[d]^-{1_{\widetilde X}} &
\widetilde X \ar[r]^-\epsilon \ar[d]^-{q^X} &
\widetilde X_{k} \ar[dl]^-{q^X_{k}} \\
\widetilde X \ar[r]^-{q^X} \ar@/^1pc/[u]^-{s=1_{\widetilde X}} & X.
}
$$
Then, for the points $\Wi x_j$ in the fiber of $x$, we have $s(\Wi x_j)
= \Wi x_j$. Since $\epsilon$ is the inclusion, this entails
$$\tau(x) = \frac{1}{k} \sum_{j=1}^k \Wi x_j.$$
\end{proof}

A special case is when $X = K(\pi,1)$ and $\pi$ is finite. Then $\widetilde X$
may be taken to be the contractible free $\pi$-space $E\pi$ and we may apply
Corollary~\ref{cor:kunivcover} to augment a result of~\cite{KW}. 

\begin{corollary}\label{cor:Kpifinite}
If $\pi$ is a finite group, then
$$\dcat(\pi) = \dcat_1(\pi) = \dcat_1(K(\pi,1)) \leq |\pi| - 1,$$
and the section $\tau \colon K(\pi,1) \to E\pi_{|\pi|}$ is obtained by taking the barycenter of the points in the fiber of the universal covering.
\end{corollary}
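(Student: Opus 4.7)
The plan is to observe that Corollary~\ref{cor:Kpifinite} is essentially a packaging of Corollary~\ref{cor:kunivcover} specialized to the Eilenberg--Mac~Lane space $X = K(\pi,1)$, together with the identifications between $\dcat$ and $\dcat_1$ already established for aspherical spaces. I would split the conclusion into the chain of equalities, the upper bound, and the explicit form of the section, each of which follows with almost no additional work.

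First, for the equalities $\dcat(\pi) = \dcat_1(\pi) = \dcat_1(K(\pi,1))$, I would note that the middle term is by definition: $\dcat_1(\pi)$ is just the notation $\dcat_1(K(\pi,1))$ since the homotopy type of $K(\pi,1)$ is determined by $\pi$ and $\dcat_1$ is a homotopy invariant by Corollary~\ref{cor:homo inv}. For the first equality, apply Proposition~\ref{prop:XKpi1} to the map $f = 1_{K(\pi,1)}\colon K(\pi,1)\to K(\pi,1)$, which gives $\dcat_1(K(\pi,1)) = \dcat(K(\pi,1)) = \dcat(\pi)$.

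Next, for the upper bound, I would simply apply Corollary~\ref{cor:kunivcover} with $X = K(\pi,1)$. Since $\pi_1(K(\pi,1)) = \pi$ is finite, that corollary immediately yields $\dcat_1(K(\pi,1)) \leq |\pi| - 1$.

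Finally, for the explicit description of the section, observe that the universal cover of $K(\pi,1)$ may be taken to be $E\pi$, the contractible total space of the universal principal $\pi$-bundle, since $E\pi$ is simply connected (in fact contractible) and the quotient map $E\pi \to E\pi/\pi = K(\pi,1)$ is a covering with deck group $\pi$. Plugging $\widetilde{X} = E\pi$ and $k = |\pi|$ into the explicit formula
\[
\tau(x) = \frac{1}{k}\sum_{j=1}^{k}\widetilde x_j
\]
from Corollary~\ref{cor:kunivcover} gives precisely the barycenter of the fiber $(q^X)^{-1}(x) = \{\widetilde x_1,\ldots,\widetilde x_{|\pi|}\}$, as claimed. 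There is no real obstacle here; the only point worth underlining is that the contractibility of $E\pi$ ensures $\dcat_1(\widetilde{X}) = 0$, so the reduced diagram at the end of the proof of Corollary~\ref{cor:kunivcover} applies verbatim and no further work is needed.
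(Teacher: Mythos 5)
Your proposal is correct and matches the paper's own argument: the paper states this corollary as the special case of Corollary~\ref{cor:kunivcover} with $X = K(\pi,1)$ and $\widetilde X = E\pi$, with the equalities $\dcat(\pi) = \dcat_1(\pi) = \dcat_1(K(\pi,1))$ supplied by Corollary~\ref{cor:classmap} (equivalently Proposition~\ref{prop:XKpi1}). Nothing to add.
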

\begin{remark}\label{new addition}
 In~\cite{KW2} (also see~\cite{Dr}), it is shown that if $\pi$ is a finite $p$-group, then in fact
$\dcat_1(\pi)=\dcat(\pi)=|\pi| - 1$, so this provides other examples where $\dcat_1$ is 
non-trivial; indeed, it can be as large as desired. 
\end{remark}

\subsection{The case of torsion-free groups}\label{subsec: torsion-free}
In~\cite{KW}, it was shown that for a torsion-free discrete group $\pi$, a version
of the Eilenberg--Ganea theorem holds: namely (in our notation),
\[
\dcat(\pi) = \cd(\pi).
\]
This result has been instrumental in furthering research on distributional category and related invariants, see, for instance,~\cite{Dr,Ja2,JO} and Sections~\ref{sec: relationship with pi1} and~\ref{sec:essential}. A slightly different proof of this theorem was obtained in~\cite[Section 5.1]{Dr}. Here, we want to recast this result in terms of 
distributional one-category by providing an alternative proof using the classical 
invariant $\cat_1$. We don't claim that this viewpoint is substantially different from
the proof in~\cite{KW}, but there is a subtle discrete flavor to this proof that, we feel, makes it simpler.

Before we proceed, we recall that $\dcat_1(X) \leq \cat_1(X)$, where $\cat_1(X)$ is the least integer $k$ such that there is a map $X \to L$ with $\hdim(L)=k$ that induces an isomorphism of fundamental groups (see Theorem~\ref{thm:cat1EG}). The theorem of Eilenberg and Ganea gives $\cat_1(\pi) = \cd(\pi)$. We have also seen in Corollary~\ref{cor:classmap} that for a discrete group $\pi$, $\dcat_1(\pi) = \dcat(\pi)$, so that $\dcat_1(\pi)\le\cd(\pi)$ is always true. 

The exact same proof as in~\cite[Section 7]{KW} shows that $\pi$ acts freely on $\B_n(\pi)$ (by multiplication in $\pi$) if and only if $\pi$ is torsion-free. In fact, $\B_n(\pi)$ can be
identified with the $(n-1)$-skeleton of the infinite simplex $\Delta^\pi$, whose vertices are the elements of $\pi$ and where the $\pi$-action is simplicial. In particular, this means that the $\pi$-action is a covering space action in the terminology of~\cite{Ha}. Moreover, the covering is regular or normal (see~\cite[Proposition~1.40]{Ha}). 

Also, note that the measure space $\B_n(\pi)$ is simply connected when $n \ge 3$, and 
$\B_n(\pi) \simeq \bigvee S^{n-1}$ by~\cite{KK} for all $n$.

\begin{theorem}[\cite{KW}]\label{th: after kw}
If $\pi$ is torsion-free, then $\dcat(\pi) = \cd(\pi)$.
\end{theorem}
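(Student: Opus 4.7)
The plan is to reduce the theorem to a statement about distributional one-category and then compare it with classical one-category via the Borel construction. By Corollary~\ref{cor:classmap}, $\dcat(\pi) = \dcat_1(K(\pi,1))$, and combining Proposition~\ref{prop:min1cat1} with the classical Eilenberg--Ganea theorem gives $\dcat_1(K(\pi,1)) \le \cat_1(K(\pi,1)) = \cd(\pi)$. So the nontrivial direction is $\cd(\pi) \le \dcat_1(\pi)$, which I will obtain by showing $\cat_1(K(\pi,1)) \le \dcat_1(K(\pi,1))$ directly.

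Set $K = K(\pi,1)$, let $n = \dcat_1(\pi)$ with $n \ge 2$ (the cases $n \le 1$ are handled separately), and fix a section $s \colon K \to E\pi_{n+1}$ of $q^K_{n+1}$, where $E\pi = \widetilde K$ is contractible. The central geometric step is to realize $E\pi_{n+1}$ as a Borel construction. Since $\pi$ acts freely on $E\pi$ by deck transformations and, using torsion-freeness (as recalled just before the theorem), also freely on $\B_{n+1}(\pi)$ by multiplication, one obtains a fiberwise identification $E\pi_{n+1} \cong E\pi \times_\pi \B_{n+1}(\pi)$. The projection onto the second factor is $\pi$-equivariant and descends to a fiber bundle $E\pi_{n+1} \to \B_{n+1}(\pi)/\pi$ whose fibers are contractible copies of $E\pi$, so $E\pi_{n+1} \simeq \B_{n+1}(\pi)/\pi =: L$.

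Composing $s$ with this homotopy equivalence yields a map $\sigma \colon K \to L$. Since $\B_{n+1}(\pi) \simeq \bigvee S^n$ is simply connected for $n \ge 2$, the long exact sequence of the fibration $\B_{n+1}(\pi) \to E\pi_{n+1} \to K$ gives $\pi_1(L) \cong \pi$; and because $q^K_{n+1}\circ s = 1_K$ induces the identity on $\pi_1$, $\sigma$ is a $\pi_1$-isomorphism. On the other hand, the equivalence $\B_{n+1}(\pi) \simeq \bigvee S^n$ equips it with a free cellular $\pi$-action making $L$ homotopy equivalent to an $n$-dimensional CW complex, so $\hdim(L) \le n$. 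By the characterization of $\cat_1$ recalled earlier~\cite{Op}, this forces $\cat_1(\pi) \le n$, whence $\cd(\pi) = \cat_1(\pi) \le n = \dcat_1(\pi) = \dcat(\pi)$.

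The main obstacle will be executing the Borel-construction identification rigorously and producing a free cellular $\pi$-action on an $n$-dimensional CW model of $\B_{n+1}(\pi)$; both depend essentially on torsion-freeness. The residual cases are routine: $n = 0$ forces $\pi$ trivial because the universal covering admits no section otherwise, while $n = 1$ falls to the same Borel picture, which yields an injection of $\pi$ into the (free) fundamental group of the graph $\B_2(\pi)/\pi$, so that $\pi$ is free by the Nielsen--Schreier theorem and $\cd(\pi) \le 1$.
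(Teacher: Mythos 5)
Your proposal is correct and uses essentially the same strategy as the paper: realize $E\pi_{n+1}$ as the Borel construction $E\pi \times_\pi \B_{n+1}(\pi)$, use torsion-freeness to get a free $\pi$-action on $\B_{n+1}(\pi) \simeq \bigvee S^n$, pass to the $n$-dimensional quotient $L = \B_{n+1}(\pi)/\pi$, show the resulting map $K(\pi,1) \to L$ is a $\pi_1$-isomorphism, and invoke the Eilenberg--Ganea characterization $\cat_1(\pi)=\cd(\pi)$. The only difference in the main step is cosmetic: the paper routes through the bijection between sections of $q_{n+1}$ and equivariant maps $E\pi\to\B_{n+1}(\pi)$, whereas you route through the contractible-fiber projection $E\pi\times_\pi\B_{n+1}(\pi)\to\B_{n+1}(\pi)/\pi$. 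These produce the same map $K\to L$, so this is the same proof. Where you genuinely improve on the paper is the case $n=\dcat_1(\pi)=1$: the paper disposes of $\cd(\pi)\le 1$ and then jumps to $n>1$, implicitly leaving open the possibility $\dcat_1(\pi)=1$ with $\cd(\pi)\ge 2$ (and indeed for $n=1$ the fiber $\B_2(\pi)$ is not simply connected, so the quotient map is only $\pi_1$-injective, not an isomorphism). Your observation that $\pi$ then embeds in the free fundamental group of the graph $\B_2(\pi)/\pi$, forcing $\pi$ free by Nielsen--Schreier and so $\cd(\pi)\le 1$, cleanly plugs that gap.
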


\begin{proof}
Since we know that $\dcat(\pi)=\dcat_1(\pi) \leq \cd(\pi)$, we must only prove the inequality
$\dcat_1(\pi) \geq \cd(\pi)$. 
Let $\dcat_1(\pi)=n$. If $n=0$, then $\pi=\{1\}$ by definition and so $\cd(\pi)=0$ as well. Therefore, we focus on the case $n\ge 1$. 

The universal covering of $B\pi=K(\pi,1)$ may be taken to be $q\colon E\pi 
\to B\pi$, where $\pi$ acts freely on the contractible space $E\pi$. Then, $n$ is the smallest integer for which the induced fibration $q_{n+1}\colon E\pi_{n+1} \to B\pi$ admits a section. Now, $q$ is a principal $\pi$-bundle map, and it is straightforward to see (as in 
\cite{KW}) that $q_{n+1}\colon E\pi_{n+1} \to B\pi$ is a fiber bundle with principal bundle $q$ and fiber $\B_{n+1}(\pi)$ with the given free action of $\pi$. The sections of $q_{n+1}$ are in bijective correspondence with equivariant maps $E\pi \to \B_{n+1}(\pi)$ --- see, for example,~\cite[Theorem~8.1]{Hu}. Since $\dcat_1(\pi)=n$, there is then such an equivariant map $\Phi$ that descends to 
a map on quotients
\[
\phi \colon B\pi = E\pi/\pi \to \B_{n+1}(\pi)/\pi.
\]
First, suppose $\dcat_1(\pi) = n \geq 2$. 
Then, since $\B_{n+1}(\pi)$ is simply connected, the map $\phi$ 
induces an isomorphism of fundamental groups (which are both $\pi$, of course). 
But the homotopy dimension of both $\B_{n+1}(\pi)\simeq \bigvee S^n$ and 
$\B_{n+1}(\pi)/\pi$ is $n$, so the definition of $\cat_1(\pi)$ and Theorem~\ref{thm:cat1EG} then 
say that 
\[
\cd(\pi) = \cat_1(\pi) \leq n = \dcat_1(\pi).
\]
For $n=1$, we no longer have that $\B_{n+1}(\pi)=\B_2(\pi)$ is simply connected, so we must argue 
a bit differently. Since $\B_2(\pi) \to \B_2(\pi)/\pi$ is a regular covering, it is also a principal $\pi$-bundle and thus, we have a map of principal $\pi$-bundles\hspace{0.3mm}\footnote{\hspace{0.3mm}Note that this is \emph{not} a morphism of principal bundles because the bases are different.}
$$\xymatrix{
E\pi \ar[r]^-\Phi \ar[d]_-q & \B_2(\pi) \ar[d] \\
B\pi \ar[r]^-\phi & \B_2(\pi)/\pi,
}
$$
which induces an isomorphism on fibers (because the actions are free and $\Phi$ is equivariant). 
We therefore have a homotopy commutative diagram
$$\xymatrix{
B\pi \ar[r]^-\phi \ar[d]_-\kappa & \B_2(\pi)/\pi \ar[d]^-{\kappa'} \\
K(\pi,1) \ar[r]^-\lambda_\cong & K(\pi,1),
}
$$
where $\kappa$ and $\kappa'$ are the classifying maps of the respective principal bundles. 
But $\kappa=\rm{id}_{B\pi}$ and $\lambda$ is an isomorphism, so the induced map on 
fundamental groups $\phi_\#\colon \pi \to \pi_1(\B_2(\pi)/\pi)$ must be injective. Then by 
\cite[Theorem~3.9]{OS}, we have that $\cat_1(B\pi) \leq \cat_1(\B_2(\pi)/\pi)$, which entails
$$\cd(\pi) = \cat_1(\pi) \leq \cat_1(\B_2(\pi)/\pi) \leq 1 = \dcat_1(\pi)$$
since the homotopy dimension of both $\B_2(\pi)$ and $\B_2(\pi)/\pi$ is equal to one. 
This completes the proof.
\end{proof}

\section{Distributional one-category of essential manifolds}\label{sec:essential}

Let $M$ be a closed $n$-manifold and $\pi_1(M)=\pi$. We say that $M$ is \emph{essential} in the sense of~\cite{Gr} if there exists a classifying map $\kappa \colon M \to K=K(\pi,1)$ that cannot be deformed into the $(n-1)$-skeleton of $K$. It is well-known that $M$ is essential if and only if $\cat_1(M)=\cat(M)=\dim(M)=n$ (see below). In this section, we aim to study the distributional one-category of some closed essential manifolds (in dimensions $\ge 3$) that have torsion-free fundamental group.

\begin{remark}
    We confine our discussion in this section to dimensions $n\ge 3$ since we will be using the theory of primary obstructions. We already know $\dcat_1(M)$ values when $M$ is essential and $n\in\{1,2\}$. Indeed, $\dcat_1(S^1)=1$, $\dcat_1(\R P^2)=1$, and $\dcat_1(M)=2$ for all closed surfaces $M\not\cong \R P^2$ by Example~\ref{ex:smallcd} and Theorem~\ref{theo:mainresult1}.
\end{remark}

Using obstruction theory (see, for example,~\cite[Section 8]{Ba}), it can be shown that $M$ is essential if and only if there exists a coefficient system $\A$ such that the map $\kappa^*\colon H^n(K;\A) \to H^n(M;\kappa^*(\A))$ is non-trivial for a classifying map $\kappa$. Now, consider the \emph{Berstein--Schwarz} class $\beta_\pi$ of $\pi$, which is the first obstruction to a lift of $K$ to its universal cover $\Wi K$,~\cite{Sch}. If $I(\pi)$ denotes the augmentation ideal of $\Z[\pi]$, then $\beta_\pi\in H^1(K;I(\pi))$. A theorem of Schwarz and Berstein says that $\cat(M)=n$ if and only if $\kappa^*(\beta_\pi^n)\ne 0$ in $H^n(M;\kappa^*(I(\pi)^{\otimes n}))$, see~\cite[Theorem 2.51]{CLOT} and~\cite{KR}. Hence, by taking $\A=I(\pi)$ above, we see that $\cat(M)=n$ if and only if $M$ is essential.

Recall from~\cite{Op} that $\cat_1(M)\le k$ if and only if there is a map $s\colon M \to L$ with $\hdim(L)=k$ such that $s$ induces an isomorphism of fundamental groups and the following diagram commutes:
\[
\begin{tikzcd}
    M \arrow{rr}{\kappa} \arrow{dr}{s} 
    &
    &
    K 
    \\
    &
    L. \arrow{ur}
    &
\end{tikzcd}
\]
Hence, $\cat_1(M)\ge r$ if $\kappa^*\colon H^r(K,\A) \to H^r(M;\kappa^*(\A))$ is non-zero for some local coefficient system $\A$. When $M$ is essential, we have $\kappa^*(\beta_\pi^n)\ne 0$, so that we can take $r=n$ and $\A=I(\pi)$ to get $\cat_1(M)\ge n$. This gives $\cat_1(M)=\cat(M)=n$.

\begin{remark}
  In the case of $\dcat_1$, the equality $\dcat_1(M)=\dim(M)$ is not true for essential $n$-manifolds $M$ in general. To see this, take an essential manifold $M$ with $\pi_1(M)$ finite and $\dim(M)\ge |\pi_1(M)|$ (for instance, take $M\simeq \R P^n$). Then $\dcat_1(M)<\dim(M)$ in view of Corollary~\ref{cor:kunivcover}. In particular, $\cat_1(M)$ and $\dcat_1(M)$ can be arbitrarily far apart for essential manifolds $M$. 
\end{remark}

In light of the above remark, we now work in the domain of essential manifolds having torsion-free fundamental groups (as in~\cite{Ja2}) to check which of these manifolds have maximum distributional one-category. 

\subsection{Essential manifolds having the cap property}\label{subsec: ess + cap}
If $M$ is an essential $n$-manifold with $\pi=\pi_1(M)$, then $\beta_\pi^n\ne 0$, so that $\cd(\pi)\ge n$ by definition. If $\pi$ is torsion-free, then an analogue of the Eilenberg--Ganea theorem (see Theorem~\ref{th: after kw}) implies that $\dcat(K)=\dcat(\pi)=\cd(\pi)\ge n$. But this means the $n$-th fibration $e^K_n:P_0(K)_n\to K$ cannot admit a section. Since the fiber $\B_n(\Omega K)$ of $e^K_n$ is $(n-2)$-connected~\cite{Dr} and $n\ge 3$, the primary obstruction to a section of $e^K_n$ is a class $\mathfrak{a}_\pi \in H^n(K;\F)$, where $\F$ is a $\pi$-module. Therefore, if $\hdim(K)=n$, then $\mathfrak{a}_\pi\ne 0$. 

Let us recall a homological condition on essential manifolds that guarantees $\mathfrak{a}_\pi\ne 0$ regardless of the assumption on $\hdim(K)$.

\begin{definition}[\protect{\cite{Ja2}}]
    Suppose $M$ is an essential $n$-manifold with $\pi=\pi_1(M)$ torsion-free, and let $\kappa\colon M\to K=K(\pi,1)$ be a classifying map that does not deform into the $(n-1)$-skeleton of $K(\pi,1)$. Then $M$ is said to satisfy the \emph{cap property} if $\kappa_*([M])\frown \mathfrak{a}_\pi\ne 0$. 
\end{definition}
\begin{remark}
 Since the orientation sheaf $\O_M$ on $M$ is determined by the kernel of the first Stiefel--Whitney class $w\colon \pi_1(M)=\pi\to \Z_2$, for $M$ to take (co)homology with coefficients in $\O_M$ is the same as taking (co)homology with coefficients in the twisted integers $\Z_w$.  Because $\kappa$ is a classifying map, we have $\kappa_*([M])\in H_n(K;\Z_w)$ so that $\kappa_*([M])\frown\mathfrak{a}_\pi \in H_0(K;\Z_w\otimes\F)$ for the $\pi$-module $\F$ as above.
\end{remark}

Our main result of this section studies $\dcat_1$ of connected sums of manifolds that have an essential summand satisfying the cap property. 

\begin{theorem}\label{th: essential sum}
    Let $M$ be a closed essential $n$-manifold ($n\ge 3$) such that $\pi_1(M)$ is torsion-free. If $M$ satisfies the cap property, then $\dcat_1(M \# N) = \cat_1(M\# N) = n$ for each closed $n$-manifold $N$. In particular, $\dcat_1(M)=n$.
\end{theorem}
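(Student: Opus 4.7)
The plan is to sandwich both $\dcat_1(M\# N)$ and $\cat_1(M\# N)$ between $n$. The upper bound is immediate from the chain
\[
\dcat_1(M\# N) \;\leq\; \cat_1(M\# N) \;\leq\; \dim(M\# N) \;=\; n,
\]
using Proposition~\ref{prop:min1cat1} and the standard fact (via taking $L = X$ in the characterization of $\cat_1$ recalled in Subsection~\ref{subsec: ess + cap}) that $\cat_1(X) \leq \dim(X)$ for a closed manifold $X$. So the task reduces to proving the matching lower bound $\dcat_1(M\# N) \geq n$.

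For this, I would use the degree-one collapse map $p\colon M\# N \to M$ that shrinks the $N$-summand to a point. Let $\kappa_M\colon M \to K = K(\pi,1)$ classify the universal cover of $M$, let $\kappa'\colon M\# N \to K' = K(\pi_1(M\# N),1)$ classify the universal cover of $M\# N$, and let $r\colon K' \to K$ be induced by the projection $\pi_1(M)\ast \pi_1(N)\twoheadrightarrow \pi_1(M)$. Since $r\circ \kappa'$ and $\kappa_M\circ p$ induce the same homomorphism on $\pi_1$ and both land in a $K(\pi,1)$, they are homotopic. Pushing any lift of $\kappa'$ along $e^{K'}_{n+1}$ forward through the induced map $(r_*)_{n+1}\colon P_0(K')_{n+1} \to P_0(K)_{n+1}$ gives a lift of $r\circ \kappa'$ along $e^K_{n+1}$; combining this ``map-out'' observation with Corollary~\ref{cor:classmap} yields
\[
\dcat(\kappa_M\circ p) \;=\; \dcat(r\circ \kappa') \;\leq\; \dcat(\kappa') \;=\; \dcat_1(M\# N).
\]
Hence it suffices to show $\dcat(\lambda) \geq n$, where $\lambda := \kappa_M \circ p$.

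For this final step I would run the obstruction-theoretic argument from Subsection~\ref{subsec: ess + cap}. The fiber $\B_n(\pi)$ of $e^K_n$ is $(n-2)$-connected (since $\pi$ is torsion-free and $n \geq 3$), so obstructions to lifting $\lambda$ along $e^K_n$ live in $H^k(M\# N;\lambda^*\F)$ for $k \geq n$; because $\dim(M\# N) = n$, the primary obstruction $\lambda^*(\mathfrak{a}_\pi) \in H^n(M\# N;\lambda^*\F)$ is the only one. Since $p$ has degree one, $p_*([M\# N]) = [M]$ with respect to the appropriate twisted orientation sheaves, so naturality of the cap product yields
\[
p_*\bigl([M\# N] \frown \lambda^*(\mathfrak{a}_\pi)\bigr) \;=\; [M] \frown \kappa_M^*(\mathfrak{a}_\pi),
\]
and the cap property of $M$, via Poincar\'e--Lefschetz duality on $M$, shows the right-hand side is nonzero. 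Therefore $[M\# N] \frown \lambda^*(\mathfrak{a}_\pi) \neq 0$, and Poincar\'e--Lefschetz duality on $M\# N$ forces $\lambda^*(\mathfrak{a}_\pi) \neq 0$. Thus $\lambda$ admits no lift along $e^K_n$ and $\dcat(\lambda) \geq n$, completing the proof.

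The main obstacle I anticipate is the careful bookkeeping with twisted coefficients when $M$ or $N$ is non-orientable: one has to verify that the two invocations of Poincar\'e--Lefschetz duality and the naturality identity above are compatible with the correct orientation sheaves on $M$ and $M\# N$. This is routine but must be carried out precisely, and it mirrors the twisted-coefficient setup already in place from the definition of the cap property itself.
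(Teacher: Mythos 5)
Your proposal follows the same overall outline as the paper's proof: you use the degree-one collapse map $p\colon M\#N\to M$, identify $\dcat_1(M\#N)$ with $\dcat$ of a map into $K(\pi,1)$ via Corollary~\ref{cor:classmap} and Proposition~\ref{prop:min} (your route through $K'$ and $r$ is a slightly more roundabout but correct version of the paper's direct use of Proposition~\ref{prop:min}), reduce to showing that the primary obstruction $\lambda^*(\mathfrak{a}_\pi)$ is nonzero, and invoke the cap property. The divergence — and the gap — is in the final cap-product computation.

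You propose to prove $\lambda^*(\mathfrak{a}_\pi)\neq 0$ from the identity $p_*\bigl([M\#N]\frown\lambda^*(\mathfrak{a}_\pi)\bigr)=[M]\frown\kappa_M^*(\mathfrak{a}_\pi)$, which you justify by naturality of the cap product together with $p_*([M\#N])=[M]$. This identity does not merely require ``careful bookkeeping'' when $N$ is non-orientable; it fails to make sense. For $p_*([M\#N])$ to live in $H_n(M;\Z_{w_M})$ one needs $[M\#N]$ to lie in $H_n(M\#N;p^*\Z_{w_M})$, but the fundamental class lives in $H_n(M\#N;\Z_{w_{M\#N}})$ and $w_{M\#N}\neq p^*(w_M)$ whenever $N$ is non-orientable: restricted to the punctured summand $N_0$, $w_{M\#N}$ restricts to $w_N\neq 0$ while $p^*(w_M)$ restricts to $0$ because $p|_{N_0}$ factors through a disk. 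In the extreme case $M$ orientable and $N$ non-orientable, $H_n(M\#N;p^*\Z_{w_M})=H_n(M\#N;\Z)=0$, so there is literally no class to push forward, and both the naturality identity and the subsequent Poincar\'e--Lefschetz duality step on $M\#N$ (which pairs $\lambda^*(\mathfrak{a}_\pi)$ against the genuine $\Z_{w_{M\#N}}$-fundamental class, not a $p^*\Z_{w_M}$-class) cannot be combined in the way you need. The theorem, however, is stated for \emph{all} closed $n$-manifolds $N$.

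The paper circumvents exactly this by never pushing forward along $p$ in homology; instead it pushes the $H_0$-class $[M\#N]\frown(\kappa\circ f)^*(\mathfrak{a}_\pi)$ forward along the $\pi_1$-isomorphism $q\circ\iota\colon M\#N\to M\vee N$ of~\eqref{eq: eq1}. Because $q\circ\iota$ is a $\pi_1$-isomorphism, every local system on $M\#N$ is pulled back from $M\vee N$ and $(q\circ\iota)_*$ is an isomorphism on $H_0$ with arbitrary local coefficients, so no mismatch of orientation characters arises. One then decomposes the image over the wedge and uses the cap property to detect the $M$-summand. To repair your argument you would need to replace the degree-one naturality step by this (or an equivalent) device that respects the orientation sheaves; as written, your proof is correct only under the additional hypothesis that $N$ is orientable (or, more precisely, that $w_{M\#N}=p^*w_M$).
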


To aid in the proof of this theorem, we first give the following technical remark.

\begin{remark}\label{rem: sheaf details}
Let $B$ be a small closed $n$-disc common to $M$ and $N$ that is used to form the connected sum $M\# N$. Consider the commutative diagram
\[
\begin{tikzcd}
    B \arrow[hookrightarrow]{r}{i_M} \arrow[hookrightarrow]{d}[swap]{i_N} 
    &
    M \arrow[hookrightarrow]{d}[swap]{j_M} 
    &
    M\#N \arrow[hookrightarrow]{dl}[swap]{\iota} \arrow{d}{q\circ\iota}
    \\
    N \arrow[hookrightarrow]{r}[swap]{j_N} 
    & 
    M\cup_BN \arrow{r}[swap]{q} 
    & 
    M\vee N,
\end{tikzcd}
\]
where $q$ is the quotient map $M\cup_BN\to(M\cup_BN)/B = M \vee N$, which is a homotopy equivalence since $B$ is contractible. Let $\O_X$ denote the orientation sheaf on a closed $n$-manifold $X$. The orientation sheaf $\O'$ on $M\cup_B N$ is obtained by identifying $\O_M$ and $\O_N$ along $\O_B$. Then the orientation sheaf $\O$ on $M\vee N$ is obtained by pulling back $\O'$ along a homotopy inverse $q'\colon M\vee N\to M \cup_B N$ of $q$, and the restriction of $\O'$ along $\iota$ gives the orientation sheaf $\O_{\#}$ on $M\# N$. In particular, all the maps in the above diagram preserve the respective orientation sheaves.
\end{remark}

\begin{proof}[Proof of Theorem~\ref{th: essential sum}]
    Let $\pi=\pi_1(M)$ and $\kappa\colon M\to K=K(\pi,1)$ be a classifying map that does not deform to the $(n-1)$-skeleton. Let $f\colon M\# N\to M$ be the degree $1$ map that collapses $N$ to a point. 
The primary obstruction $\mathfrak{a}_\pi \in H^n(K,\F)$ to a section of $e_n^K$ gives the primary obstruction $(\kappa\circ f)^*(\mathfrak{a}_\pi)\in H^n(M\# N;(\kappa\circ f)^*(\F))$ to a lift of $\kappa\circ f$ along $e^K_n$. It was shown in~\cite[Section 4]{Ja2} that $(\kappa\circ f)^*(\mathfrak{a}_\pi)\ne 0$. In detail, we have a commutative diagram
\[
\begin{tikzcd}[contains/.style = {draw=none,"\in" description,sloped}]
&
M\# N \arrow[hookrightarrow]{r}{\iota} \arrow[swap]{d}{f} 
& 
M\cup_B N \arrow{d}{q}
\\
K
&
M   \arrow{l}[swap]{\kappa}
& 
M \vee N, \arrow{l}[swap]{d}
\end{tikzcd}
\]
where $d\colon M\vee N\to M$ is the collapsing map. Clearly, $f_*([M\#N])=[M]$. 
\\
\\

We now claim that $\iota_*([M\#N]) = [M]\oplus [N]$. By Remark~\ref{rem: sheaf details}, we have that $\iota_*([M\#N])\in H_n(M\cup_BN;\O')$. As in~\cite[Chapter~VI]{Wh}, homology with local coefficients satisfies the axioms of excision and the long exact sequence of a pair, so by the argument given in~\cite[Chapter~2,~Section~3]{Ha}, we get a Mayer--Vietoris sequence\hspace{0.3mm}\footnote{\hspace{0.3mm}This was pointed out by Mark Grant in the MathOverflow post number \href{https://mathoverflow.net/questions/124278/mayer-vietoris-sequence-in-homology-with-local-coefficients}{124278}.}
\begin{multline*}
    \cdots\to H_n(B;\O_B)\to H_n(M;\O_M)\oplus H_n(N;\O_N) \to H_n(M\cup_BN;\O')
    \\
    \to H_{n-1}(B;\O_B)\to\cdots,
\end{multline*}
where $H_n(B;\O_B)=H_{n-1}(B;\O_B)=0$ as $n>2$. Thus, we have an isomorphism $H_n(M\cup_BN;\O')\cong H_n(M;\O_M)\oplus H_n(N;\O_N)$ and so $\iota_*([M\#N]) = [M]\oplus [N]$. 

Note that $\iota$ induces an epimorphism of fundamental groups by definition. Since $q$ is a homotopy equivalence, the induced map 
\[
    (q\circ \iota)_*:H_0(M\# N;(q\circ \iota)^*(\A)) \to H_0(M\vee N;\A)
    \]
is an isomorphism for each system of local coefficients $\A$. Here, we recall from Remark~\ref{rem: sheaf details} that $(q\circ\iota)^*(\O)=\O_{\#}$. Therefore, in $\O\otimes (\kappa\circ d)^*(\F)$ coefficients, we have that
\begin{multline*}
(q \circ \iota)_*\left([M\#N]\frown (\kappa\circ f)^*(\mathfrak{a}_\pi)\right)
= (q \circ \iota)_*\left([M\#N]\frown (q \circ \iota)^* \circ \left(\kappa\circ d)^*(\mathfrak{a}_\pi\right)\right)
\\
= (q \circ \iota)_*([M\# N])\frown \left(\kappa\circ d)^*(\mathfrak{a}_\pi\right)
= ([M]\oplus [N])\frown(\kappa^*(\mathfrak{a}_\pi)\oplus \alpha),
\end{multline*}
where $\alpha\in H^n(N;\Z)$ is a class such that $(\kappa\circ d)^*(\mathfrak{a}_\pi)=d^*(\kappa^*(\mathfrak{a}_\pi))=\kappa^*(\mathfrak{a}_\pi)\oplus \alpha$. We can write $d^*(\kappa^*(\mathfrak{a}_\pi))\in H^n(M\vee N;(\kappa\circ d)^*(\F))$ as a direct sum because there is a Mayer--Vietoris sequence for cohomology with local coefficients (see~\cite[Chapter~II,~Section~13]{Bre}), so using similar arguments as before, we get an isomorphism 
\[
H^n(M\vee N;(\kappa\circ d)^*(\F))\cong H^n(M;\kappa^*(\F))\oplus H^n(N;\Z)
\]
since the restrictions of the coefficient system $(\kappa\circ d)^*(\F)$ to $M$ and $N$ are $\kappa^*(\F)$ and trivial, respectively.
The cap property on $M$ ensures that $\kappa_*([M]\frown \kappa^*(\mathfrak{a}_\pi))\ne 0$ in $\O_M\otimes \kappa^*(\F)$ coefficients, so that $([M]\oplus [N])\frown(\kappa^*(\mathfrak{a}_\pi)\oplus \alpha) \ne 0$, which implies $(\kappa\circ f)^*(\mathfrak{a}_\pi)\ne 0$ in view of the above equality. Hence, there is no lift of $\kappa\circ f$ along $e^K_n$ since $(\kappa\circ f)^*(\mathfrak{a}_\pi)$ is the only obstruction to such a lift. But then $\dcat(\kappa\circ f)\ge n$ by definition (\emph{cf.} Section~\ref{sec:basic2}). Now, Propositions~\ref{prop:min1cat1},~\ref{prop:min}, and~\ref{prop:XKpi1} imply
\[
n\le\dcat(\kappa\circ f)=\dcat_1(\kappa\circ f)\le\dcat_1(M\# N)\le\cat_1(M\# N)\le n,
\]
where the last inequality is obvious. This completes the proof.
\end{proof}

A special case of our interest is as follows.

\begin{corollary}\label{cor: essential sum1}
Let $M$ be a closed essential $n$-manifold such that $K(\pi_1(M),1)$ is a closed $n$-manifold. Then $\dcat_1(M \# N) = \cat_1(M\# N) = n$ for each closed $n$-manifold $N$. In particular, this holds if $M$ is aspherical.
\end{corollary}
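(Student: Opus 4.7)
The plan is to reduce the corollary to Theorem~\ref{th: essential sum} by verifying its two hypotheses for the manifold $M$: namely, that $\pi=\pi_1(M)$ is torsion-free and that $M$ satisfies the cap property. Both will follow from the assumption that $K=K(\pi,1)$ is a closed $n$-manifold. For torsion-freeness, $K$ being a closed $n$-manifold gives it a finite-dimensional cellular model, so $\cd(\pi)\le n<\infty$; since any nontrivial finite subgroup of $\pi$ would force $\cd(\pi)=\infty$, the group $\pi$ must be torsion-free.

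To verify the cap property $\kappa_*([M])\frown\mathfrak{a}_\pi\ne 0$, I would proceed in two stages. First, I would show $\mathfrak{a}_\pi\ne 0$: by Theorem~\ref{th: after kw} applied to the torsion-free group $\pi$, one has $\dcat(\pi)=\cd(\pi)=n$, so the fibration $e_n^K\colon P_0(K)_n\to K$ admits no section. The fiber $\B_n(\Omega K)\simeq \B_n(\pi)$ is $(n-2)$-connected (since $\Omega K\simeq\pi$ is discrete and $n\ge 3$), and $H^k(K;-)$ vanishes for $k>n$ with any local coefficient system because $K$ is a closed $n$-manifold. Thus the only possibly nonzero obstruction to a section of $e_n^K$ is the primary class $\mathfrak{a}_\pi\in H^n(K;\F)$, and the absence of a section forces $\mathfrak{a}_\pi\ne 0$.

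Second, I would invoke Poincar\'e duality on $K$: capping with $[K]$ (taken with the orientation sheaf $\Z_{w_K}$) is an isomorphism $H^n(K;\F)\to H_0(K;\F\otimes\Z_{w_K})$, so $[K]\frown\mathfrak{a}_\pi\ne 0$. Since $M$ is essential and $\kappa\colon M\to K$ is a map between closed $n$-manifolds, the pushforward $\kappa_*([M])$ equals a nonzero (twisted) integer multiple $d\cdot[K]$ in the appropriate $H_n(K;-)$: this is the standard ``nonzero degree'' interpretation of essentiality for classifying maps targeting aspherical closed manifolds. Naturality of the cap product then gives
\[
\kappa_*([M])\frown\mathfrak{a}_\pi=d\cdot\left([K]\frown\mathfrak{a}_\pi\right)\ne 0,
\]
which is the cap property. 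Theorem~\ref{th: essential sum} now yields $\dcat_1(M\# N)=\cat_1(M\# N)=n$ for every closed $n$-manifold $N$. For the ``in particular'' clause, if $M$ is aspherical then $M$ itself is a model for $K(\pi,1)$, so the hypothesis is automatic.

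The main obstacle will be handling the twisted coefficients when $M$ or $K$ is non-orientable, since the identification $\kappa_*([M])=d\cdot[K]$ requires matching the local system $\kappa_*\Z_{w_M}$ with $\Z_{w_K}$ on $K$. I would address this by using the fact that $w_1(M)\colon M\to K(\Z_2,1)$ factors through $\kappa$ (as $K(\Z_2,1)$ is aspherical), giving a comparison between the orientation characters on the $\pi$-module level, and then checking that essentiality precisely means nonvanishing of the resulting (twisted) degree.
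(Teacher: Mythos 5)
Your proof is correct and follows the same essential strategy as the paper: reduce to Theorem~\ref{th: essential sum} by checking torsion-freeness (from $\cd(\pi)\le\hdim(K)=n<\infty$) and the cap property via nonvanishing of $\kappa_*([M])$ plus Poincar\'e duality in $K$. The paper compresses these steps by citing Babenko for $\kappa_*([M])\ne 0$ and~\cite[Corollary~3.4]{Ja2} for the duality argument, whereas you unpack them: you derive $\mathfrak{a}_\pi\ne 0$ from $\dcat(\pi)=\cd(\pi)=n$ together with the vanishing of $H^{>n}(K;-)$, then convert this to the cap property via duality and the nonzero degree of $\kappa$. That extra detail is welcome. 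One small remark on your flagged obstacle with twisted coefficients: your proposed route (factoring $w_1(M)$ through $\kappa$) only identifies $w_M$ with \emph{some} character on $\pi$, not necessarily with $w_K$. The cleaner resolution is that Poincar\'e duality in the closed manifold $K$ forces $H_n\bigl(K;\Z_{w_M}\bigr)\cong H^0\bigl(K;\Z_{w_M}\otimes\Z_{w_K}\bigr)$, which vanishes unless $w_M=w_K$; so essentiality ($\kappa_*([M])\ne 0$) automatically forces the orientation characters to agree and no separate comparison is needed. With that adjustment the argument is fully rigorous.
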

\begin{proof}
    From obstruction theory, it follows that $\kappa_*([M])\ne 0$,~\cite[Theorem 8.2]{Ba}. Therefore, $M$ satisfies the cap property due to Poincaré duality in $K(\pi_1(M),1)$, see~\cite[Corollary 3.4]{Ja2}. Hence, Theorem~\ref{th: essential sum} applies.
\end{proof}

\begin{remark}\label{Alex}
    One can generalize Corollary~\ref{cor: essential sum1} beyond the realm of manifolds by taking $N$ to be a closed (non-manifold) Alexandrov $n$-space~\cite{BGP} whose curvature is bounded from below. Examples of such spaces include supsensions of $\R P^2$ and $\C P^2$ and their products (and more generally, orbit spaces of isometric compact Lie group actions on complete Riemannian manifolds whose sectional curvatures are uniformly bounded below). In case of such $N$, the connected sum $M\# N$ along any \emph{regular point} of $N$ (see~\cite{BGP,decomposition}) still makes sense, and one can obtain the equality $\dcat_1(M\#N)=\cat_1(M\#N)=n$ by proceeding as in Theorem~\ref{th: essential sum} and using ideas from~\cite[Section 7]{Ja2}.
\end{remark}

The results above provide examples of compact metric spaces whose $\dcat_1$ values can be as large as desired.

\begin{remark}\label{rmk:sum with rpn}
    We emphasize that we only require assumptions on $\pi_1(M)$ in the above statements; there are no assumptions on $\pi_1(M\#N)$. For example, we have $\dcat_1(M\# \R P^n)=n$ for any aspherical manifold $M$, even though $\dcat_1(\R P^n)=1$ and $K(\pi_1(M\#\R P^n))\simeq M\vee \R P^{\infty}$ for each $n$.
\end{remark}

We have from Corollary~\ref{cor:classmap} and Theorem~\ref{th: after kw} that $\dcat_1(M)=\dim(M)$ if $M$ is a closed aspherical manifold. So, Theorem~\ref{th: essential sum} and Corollary~\ref{cor: essential sum1} are extensions of this result to some essential manifolds and connected sums having aspherical summands, respectively.

\begin{remark}
    Since $\dcat_1(X)\le\dcat(X)\le\dim(X)$ is always true, our results recover the corresponding results of~\cite{Ja2} on distributional category. Concretely, Theorems~A,~B,~C,~and F in~\cite{Ja2} and their corollaries in Sections~3 and~4 there are extended in this section without using those results.
\end{remark}

\subsection{Some computations}\label{subsec:appl}
Let us consider a particular class of smooth, essential manifolds. We say that a closed symplectic manifold $M$ is \emph{symplectically aspherical} with respect to a symplectic form $\omega$ if for any smooth map $f\colon S^2\to M$, one has
\[
\int_{S^2}f^*\omega=0.
\]
This is equivalent to the algebraic condition that $[\omega]\circ h\colon \pi_2(M)\to\R$ vanishes, where $h\colon \pi_2(M)\to H_2(M;\Z)$ is the Hurewicz homomorphism and $[\omega]\in H_{\text{dR}}^2(M;\R)$ corresponds to $\omega$. It is well-known that symplectically aspherical manifolds are essential (in fact, rationally essential), see~\cite{RO}. So, we have the following.
\begin{corollary}\label{cor: symp asph}
    If $M$ is a symplectically aspherical $n$-manifold and $K(\pi_1(M),1)$ is a closed $n$-manifold, then $\dcat_1(M\# N)=\cat_1(M\# N)=n$ for each $N$.
\end{corollary}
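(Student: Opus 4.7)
The plan is to deduce this as a direct consequence of Corollary~\ref{cor: essential sum1}. All the heavy lifting has already been done in Theorem~\ref{th: essential sum} and its corollary; I only need to verify the hypotheses for the case at hand.

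First, I would invoke the fact cited just above the statement: every closed symplectically aspherical manifold $M$ is essential, see~\cite{RO}. Concretely, symplectic asphericity, i.e., the vanishing of $[\omega]\circ h$ on $\pi_2(M)$, forces the classifying map $\kappa\colon M\to K(\pi_1(M),1)$ to send the fundamental class $[M]$ to a nonzero (in fact rationally nonzero) class in $H_n(K(\pi_1(M),1))$, because the top power $[\omega]^{n/2}\in H^n_{\mathrm{dR}}(M;\mathbb{R})$ is a nonzero multiple of the volume form and factors through $\kappa^*$ by the vanishing of spherical classes. Hence $\kappa$ cannot be deformed into the $(n-1)$-skeleton of $K(\pi_1(M),1)$, which is precisely the essentiality condition of~\cite{Gr}.

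Second, the hypothesis that $K(\pi_1(M),1)$ is a closed $n$-manifold is given. Thus $M$ satisfies both hypotheses of Corollary~\ref{cor: essential sum1}, and applying that corollary yields $\dcat_1(M\#N)=\cat_1(M\#N)=n$ for every closed $n$-manifold $N$.

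There is no substantial obstacle here; the statement is essentially a specialization of Corollary~\ref{cor: essential sum1} to the symplectically aspherical setting, used to produce a concrete and geometrically natural family of examples. The only nontrivial input is the classical fact that symplectic asphericity implies (rational) essentiality, for which the reference~\cite{RO} suffices.
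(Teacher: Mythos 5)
Your proof is correct and follows the same route the paper takes: reduce to Corollary~\ref{cor: essential sum1} by observing that symplectically aspherical closed manifolds are essential, which the paper delegates to~\cite{RO} while you sketch the argument (that $[\omega]$ pulls back from $K(\pi_1(M),1)$ by vanishing on spherical classes, so $\kappa^*$ is nonzero in degree $n$ on $[\omega]^{n/2}$, hence $\kappa_*[M]\ne 0$ rationally).
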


\begin{ex}
There are non-trivial symplectically aspherical manifolds with $\pi_2\ne 0$ due to~\cite{Go}. Moreover, there are manifolds with $\pi_2\ne 0$ that satisfy the hypotheses of Corollary~\ref{cor: symp asph} --- we refer the reader to~\cite[Section 3.2]{Ja2} for various such examples in all even dimensions $\ge 4$. We note that for all these closed manifolds, our computation of their distributional one-category is new, in the sense that it does not follow from Corollary~\ref{cor:classmap}, or any other known results on the distributional invariants.
\end{ex}

We conclude this section by looking at another application of Corollary~\ref{cor: essential sum1} to the distributional one-category of closed $3$-manifolds.

Recall from~\cite{GLGA} (see also~\cite{OR}) that if $M$ is a closed $3$-manifold, then
\[
\cat(M)=\begin{cases}
    1 & \text{ if } \pi_1(M)=0 \\
    2 & \text{ if } \pi_1(M)\ne 0 \text{ is free} \\
    3 & \text{ if } \pi_1(M) \text{ is not free}
\end{cases}\hspace{4mm} \text{and} \hspace{4mm}
\cat_1(M)=\begin{cases}
    0 & \text{ if } \pi_1(M)=0 \\
    1 & \text{ if } \pi_1(M)\ne 0 \text{ is free} \\
    3 & \text{ if } \pi_1(M) \text{ is not free}
\end{cases}
\]
Moreover, if $\pi_1(M)$ is torsion-free, then $\dcat(M)=\cat(M)$, see~\cite{Ja2}.

\begin{proposition}\label{prop:3-manifolds}
    If $M$ is a closed $3$-manifold, then
    \[
\dcat_1(M)=\cat_1(M)=\begin{cases}
    0 & \text{ if } \pi_1(M)=0 \\
    1 & \text{ if } \pi_1(M)\ne 0 \text{ is free} \\
    3 & \text{ if } \pi_1(M) \text{ is not free but torsion-free.}
\end{cases}
\]
\end{proposition}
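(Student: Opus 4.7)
The plan is to split the proposition into the three cases given by the condition on $\pi_1(M)$, and in each case dispense with the easier of $\cat_1$ or $\dcat_1$ by invoking already-established bounds and equalities, rather than performing any fresh obstruction-theoretic computation on $M$.

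First I would handle the two easy cases. If $\pi_1(M) = 0$, then $M$ is simply connected, so both $\cat_1(M) = 0$ (by definition of $\cat_1$) and $\dcat_1(M) = 0$ (as noted at the start of Subsection~\ref{subsec: basic properties}, $\dcat_1(X) = 0$ iff $X$ is simply connected). If $\pi_1(M) \neq 0$ is free, then $\cd(\pi_1(M)) = 1$, so Example~\ref{ex:smallcd} (built on Corollary~\ref{cor:cd1}) immediately gives $\dcat_1(M) = \cat_1(M) = 1$. Alternatively, one can combine Proposition~\ref{prop:min1cat1} with the given computation of $\cat_1(M) = 1$ and the fact that $\dcat_1(M) \geq 1$ since $M$ is not simply connected.

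The substantive case is when $\pi_1(M)$ is torsion-free but not free; here I would appeal to the Kneser--Milnor prime decomposition $M \cong M_1 \# M_2 \# \cdots \# M_k$, which yields $\pi_1(M) \cong \pi_1(M_1) * \cdots * \pi_1(M_k)$. Since $\pi_1(M)$ is torsion-free, each $\pi_1(M_i)$ is torsion-free, and by the classification of prime $3$-manifolds (Sphere theorem plus geometrization), each prime $M_i$ with torsion-free $\pi_1$ is either an $S^2$-bundle over $S^1$ (in which case $\pi_1(M_i) \cong \Z$ is free) or is aspherical. If every $M_i$ had free fundamental group, then $\pi_1(M)$ would itself be free, contradicting our hypothesis. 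Hence at least one summand, say $M_1$, is aspherical, so $K(\pi_1(M_1), 1) \simeq M_1$ is a closed $3$-manifold. Writing $M \cong M_1 \# N$ where $N = M_2 \# \cdots \# M_k$ is a closed $3$-manifold, Corollary~\ref{cor: essential sum1} applied to $M_1$ gives $\dcat_1(M) = \cat_1(M) = 3$.

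The main obstacle is really just the structural input in the third case, namely the reduction from ``torsion-free, not free'' to ``has an aspherical prime summand.'' Everything else is a direct citation of the preceding theory. I would take care to note that this structural result relies on nontrivial 3-manifold topology (Kneser--Milnor plus geometrization), but that once granted, the rest is a one-line application of Corollary~\ref{cor: essential sum1}; in particular no new obstruction-theoretic arguments are needed beyond those already carried out in Theorem~\ref{th: essential sum}.
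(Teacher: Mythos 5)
Your proposal is correct and follows essentially the same route as the paper: dispatch the simply connected and free cases by general theory (the first trivially, the second via Example~\ref{ex:smallcd}), and in the torsion-free non-free case use the prime decomposition and the sphere theorem to exhibit an aspherical summand, then invoke Corollary~\ref{cor: essential sum1}. The only cosmetic difference is that you spell out the summand-by-summand argument and mention geometrization (which the paper leaves implicit, citing~\cite{He} and~\cite[Section 5.1]{Ja2} instead), but the logical skeleton is identical.
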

\begin{proof}
    The equality in the first case is trivial. When $\pi_1(M)\ne 0$ is free, then $\dcat_1(M)=\cat_1(M)=1$ is obtained from Example~\ref{ex:smallcd}. Let us now assume that $\pi_1(M)$ is not free but torsion-free. In this case, from the prime decomposition of closed $3$-manifolds and Papakyriakopoulos's sphere theorem~(see~\cite{He}), it turns out that $M$ has a closed aspherical summand (see also~\cite[Section 5.1]{Ja2} for a LS-categorical proof of this fact). Therefore, Corollary~\ref{cor: essential sum1} gives $\dcat_1(M)=3$.
\end{proof}

The converse of Proposition~\ref{prop:3-manifolds} is not true (\emph{cf.} Remark~\ref{rmk:sum with rpn}).

\begin{corollary}\label{cor:nocd=2}
 If $M$ is a closed $3$-manifold such that $\pi_1(M)$ is not free, then $\cd(\pi_1(M))\ge 3$.
\end{corollary}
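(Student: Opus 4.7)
The plan is to split on torsion in $\pi_1(M)$ and apply results already established in the paper.

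First I would dispose of the case where $\pi_1(M)$ contains torsion. This is immediate from the classical fact (Serre's theorem, or the observation that $H^{\ast}(\mathbb{Z}/p;\mathbb{F}_p)$ is nonzero in every degree together with the natural restriction map) that any discrete group with finite cohomological dimension over $\mathbb{Z}$ must be torsion-free. Consequently, if $\pi_1(M)$ has nontrivial torsion, then $\cd(\pi_1(M))=\infty\ge 3$, and there is nothing further to check.

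Next I would treat the case in which $\pi=\pi_1(M)$ is torsion-free but not free. This is exactly the third case of \propref{prop:3-manifolds}, which gives $\dcat_1(M)=3$. On the other hand, the classifying map $\kappa\colon M\to K(\pi,1)$ yields, via \corref{cor:classmap}, the inequality $\dcat_1(M)\le \dcat_1(\kappa)=\dcat(\kappa)\le \dcat_1(K(\pi,1))=\dcat(\pi)$. Since $\pi$ is torsion-free, \theoref{th: after kw} gives $\dcat(\pi)=\cd(\pi)$. Combining, we obtain
\[
3=\dcat_1(M)\le \dcat(\pi)=\cd(\pi),
\]
which is the desired inequality.

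There is no real obstacle here; the statement is essentially the contrapositive of \corref{cor:cd1} (if $\cd(\pi)\le 2$ then $\dcat_1(M)\le 2$) specialized to closed $3$-manifolds, once \propref{prop:3-manifolds} has supplied the strict lower bound $\dcat_1(M)\ge 3$ in the torsion-free case. The only subtlety is remembering to handle groups with torsion separately, where the bound follows from purely algebraic considerations rather than from the distributional machinery.
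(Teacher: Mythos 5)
Your proof is correct, but it takes a somewhat different route from the paper's. The paper argues by contradiction: it first invokes the Stallings--Swan theorem to conclude $\cd(\pi_1(M))\ge 2$, then supposes $\cd(\pi_1(M))=2$; by Corollary~\ref{cor:cd1} this would force $\dcat_1(M)=\cat_1(M)=2$, whereas Proposition~\ref{prop:3-manifolds} (applicable because $\cd=2$ forces torsion-freeness) gives $\dcat_1(M)=3$, a contradiction. You instead argue directly: after disposing of the torsion case (where $\cd=\infty$ by restriction to a cyclic subgroup), you use Proposition~\ref{prop:3-manifolds} to get $\dcat_1(M)=3$, and then the chain $\dcat_1(M)\le\dcat_1(\pi)=\dcat(\pi)=\cd(\pi)$ (via Corollary~\ref{cor:isoineq}, Corollary~\ref{cor:classmap}, and Theorem~\ref{th: after kw}) to conclude $\cd(\pi)\ge 3$. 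Both arguments hinge on Proposition~\ref{prop:3-manifolds} and the identification $\dcat(\pi)=\cd(\pi)$ for torsion-free $\pi$, but your direct inequality avoids the contradiction structure and the explicit appeal to Stallings--Swan, while the paper's contradiction makes the obstruction to $\cd=2$ maximally visible. One small quibble: the fact that torsion forces infinite cohomological dimension is not usually attributed to Serre (Serre's theorem in this context concerns finite-index subgroups); your parenthetical alternative via $H^*(\Z/p;\mathbb{F}_p)$ and restriction is the standard justification and suffices.
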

\begin{proof}
First, note that $\cd(\pi_1(M))\ge 2$ by the Stallings--Swan theorem. 
Now, let us suppose $\cd(\pi_1(M))=2$. Then we must have $\dcat_1(M)=\cat_1(M)=2$ by Corollary~\ref{cor:cd1}. But $\cd(\pi_1(M))=2$ means $\pi_1(M)$ is torsion-free, so Proposition~\ref{prop:3-manifolds} gives $\dcat_1(M)=\cat_1(M)=3$, a contradiction. Thus, $\cd(\pi_1(M))\ne 2$. 
\end{proof}

In fact, it follows from the prime decomposition~\cite[Theorems 3.15 and 3.21]{He} that if $\pi_1(M)$ is not free, then $\cd(\pi_1(M))=3$, unless $\pi_1(M)$ has torsion, in which case $\cd(\pi_1(M))$ is infinite. However, we obtained this in Corollary~\ref{cor:nocd=2} directly using our invariant $\dcat_1$, which is interesting.

\begin{remark}\label{rem:torsion}
    In the case when $\pi_1(M)$ has non-trivial torsion, finding $\dcat_1(M)$ may be difficult. In particular, there is no single answer as $\dcat_1(\R P^3)=1$ while $\dcat_1(M\# \R P^3)=3$ for any aspherical $3$-manifold $M$. If $\pi_1(M)=\Z_3$, then $\dcat_1(M)\in\{1,2\}$ by Corollary~\ref{cor:kunivcover}, but it is unclear if anything more can be said. We do note, however, that if $\dcat_1(M)=2$, then $\cat_1(M)\ge 2$ and hence, $\cat_1(M)=3$ by~\cite{GLGA}, so we can get examples of closed $3$-manifolds $M\not\simeq \R P^3$ with $\dcat_1(M)\ne\cat_1(M)$. But we don't know yet whether $\dcat_1(M)=2$ is a possibility for $3$-manifolds. Some manifolds of interest can be $\R P^2\times S^1$ and $\R P^3\# \R P^3$, whose $\dcat$ values are also unknown,~\cite{Ja2}.
\end{remark}

\section{Sequential diagonal distributional complexity}\label{sec: diag distrib TC}
In~\cite{FGLO1, FGLO2}, a new invariant was introduced to provide a lower bound on topological complexity (\emph{cf.} Section~\ref{subsec:secat section}). This invariant is the diagonal topological complexity $\TC^{\D}$. Soon after, the sequential versions of $\TC^{\D}$, denoted $\TC^{\D}_m$ for $m\ge 2$, were introduced in~\cite{FO} and their theory was further developed in~\cite{PS}. We discuss these invariants here because they are the $\TC$-analogues of $\cat_1$. 
Indeed, for $m\ge 2$, we always have
\begin{equation}\label{five}
 \cat_1(X^{m-1})\le\TC^{\D}_m(X)\le\cat_1(X^m),   
\end{equation}
as well as $\TC_m^{\D}(X) \leq \TC_m(X)$. The definition of $\TC_m^{\D}(X)$ can be given as follows. Any path $\gamma\in X^I$ may
be lifted uniquely to the universal cover $\widetilde X$ once a starting point is chosen, and then the ending point
is determined by the uniqueness of lifting. In order to do away with choosing starting points, we can take the 
(free) diagonal action of $\pi=\pi_1(X)$ on the product $\widetilde X \times \cdots \times \widetilde X=\Wi{X}^m$, where the product is taken $m$-times, and form the orbit space
$\widetilde X \times_\pi \cdots\times_\pi \widetilde X$, which we will denote by $\Wi{X}^m_\pi$. This is a covering of $X^m$ corresponding to the diagonal subgroup $\Delta\subset \pi^m$ with fiber $\pi^{m-1}$, and there is a commutative diagram 
\begin{equation}\label{eq: diag 1}
    \xymatrix{
P(X) \ar[rr]^-\phi \ar[dr]_-{\text{ev}_m^X} & & \Wi{X}^m_\pi \ar[dl]^-{r_m^X} \\
& X^m, &
}
\end{equation}
where $P(X)$ is the free path space of $X$, $\text{ev}^X_m$ is the evaluation fibration defined in Section~\ref{subsec:secat section}, $r^X_m$ is the covering map, and
\[
\phi(\gamma) = \left[\gamma(0),\gamma\left(\frac{1}{m-1}\right),\ldots,\gamma\left(\frac{m-2}{m-1}\right),\gamma(1)\right]
\]
is well-defined because we quotient by the $\pi$-action. Then we define the $m$\emph{-th sequential diagonal topological complexity} as
\[
\TC_m^{\D}(X) = \secat(r_m^X).
\]
In this paper, we define and study the following.

\begin{definition}\label{def:dTCD}
The $m$\emph{-th sequential diagonal distributional complexity} of a space $X$, denoted $\dTC_m^{\D}(X)$, is the least integer $k$ such that the
fibration 
\[
(\Wi{X}^m_\pi)_{k+1} \xrightarrow{(r_m^X)_{k+1}} X^m
\]
admits a section. Here, the fibration $(r_m^X)_{k+1}$ has fiber $\B_{k+1}(\pi^{m-1})$.
\end{definition}
In other words, $\dTC_m^{\D}(X)=\dsecat(r_m^X)$, so that $\dTC_m^{\D}(X)\le\TC_m^{\D}(X)$ for each $m\ge 2$ follows immediately, see Section~\ref{sec:basic2}. 

\subsection{Standard inequalities}
We begin by establishing some properties of $\dTC_m^{\D}(X)$ in analogy with certain 
results for $\TC_m^{\D}(X)$ discussed in~\cite{FO,PS}.

\begin{proposition}\label{prop: dtcD vs dtc}
    For each $m\ge 2$, we have that $\dTC_m^{\D}(X)\le\dTC_m(X)$. The inequality is saturated if $X=K(\pi,1)$.
\end{proposition}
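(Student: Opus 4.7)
The plan is to deduce both statements from the factorization $\text{ev}_m^X = r_m^X \circ \phi$ exhibited in diagram~\eqref{eq: diag 1}, combined with functoriality of the fiberwise probability-measure construction and the fiber-homotopy invariance of $\dsecat$ recalled in Section~\ref{sec:basic2}.

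For the inequality, I would observe that $\phi$ is continuous and sends each fiber of $\text{ev}_m^X$ into the corresponding fiber of $r_m^X$; pushing forward probability measures fiberwise therefore yields a continuous, fiber-preserving map $\phi_{n+1}\colon P(X)_{n+1} \to (\Wi{X}^m_\pi)_{n+1}$ satisfying $(r_m^X)_{n+1} \circ \phi_{n+1} = (\text{ev}_m^X)_{n+1}$. If $\dTC_m(X)=n$ and $s\colon X^m \to P(X)_{n+1}$ is a section of $(\text{ev}_m^X)_{n+1}$, then $\phi_{n+1} \circ s$ is a section of $(r_m^X)_{n+1}$, which yields $\dTC_m^{\D}(X) \le \dTC_m(X)$.

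For the equality when $X=K(\pi,1)$, I would show that $\phi$ is itself a fiber homotopy equivalence over $X^m$; the equality then follows at once, since fiber-homotopic fibrations have equal $\dsecat$. Both $\text{ev}_m^X$ and $r_m^X$ are Hurewicz fibrations over the CW complex $X^m$, and $\phi$ is fiber-preserving. The fiber of $\text{ev}_m^X$ over $(x_0,\dots,x_{m-1})$ is homotopy equivalent to $(\Omega X)^{m-1}$ (by splitting a path at the prescribed sample times), while the fiber of $r_m^X$ is the discrete set $\pi^{m-1}$. When $X=K(\pi,1)$, $\Omega X$ is weakly equivalent to $\pi$ viewed as a discrete space, and the restriction of $\phi$ to a fiber is, up to homotopy, the projection to $\pi_0$, hence a weak equivalence. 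Since a fiber-preserving map between Hurewicz fibrations over a CW base that is a weak equivalence on every fiber is automatically a fiber homotopy equivalence, the equality $\dTC_m^{\D}(X) = \dTC_m(X)$ follows.

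The main technical point to verify will be that $\phi$ on a fiber is genuinely the $\pi_0$-projection $(\Omega X)^{m-1} \to \pi^{m-1}$: concretely, $\phi(\gamma)$ records the homotopy classes rel endpoints of the $(m-1)$ successive pieces $\gamma|_{[(i-1)/(m-1),\, i/(m-1)]}$, read off in the universal cover, and these are precisely the components of $(\Omega X)^{m-1}$. Once this identification is in place, the proposition reduces to standard fibration-theoretic facts already invoked elsewhere in the paper.
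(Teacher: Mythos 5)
Your proposal is correct and follows essentially the same approach as the paper: the inequality comes from composing a section of $(\text{ev}_m^X)_{n+1}$ with $\phi_{n+1}$, and the equality for $K(\pi,1)$ comes from $\phi$ being a fiber homotopy equivalence. The only difference is that you spell out \emph{why} $\phi$ is a fiber homotopy equivalence (Dold's theorem applied to a fiberwise weak equivalence $(\Omega X)^{m-1}\to\pi^{m-1}$), whereas the paper simply asserts this and then uses the commutative diagram to transfer a section of $(r_m^X)_{k+1}$ back to one of $(\text{ev}_m^X)_{k+1}$; both routes are immediate once $\phi_{n+1}$ is known to be a fiber homotopy equivalence.
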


\begin{proof}
    Suppose $\dTC_m(X)=k$. Then the diagram in~\eqref{eq: diag 1} induces a commutative diagram
    $$
    \xymatrix{
P(X)_{k+1} \ar[rr]^-{\phi_{k+1}} \ar[dr]_-{(\text{ev}_m^X)_{k+1}} & & (\Wi{X}^m_\pi)_{k+1} \ar[dl]^-{(r_m^X)_{k+1}} \\
& X^m. &
}
    $$
    So, a section $s\colon X^m\to P(X)_{k+1}$ of $(\text{ev}_m^X)_{k+1}$ gives a section $\phi_{k+1}\circ s$ of $(r_m^X)_{k+1}$. This implies $\dTC_m^{\D}(X)\le k$. If $X=K(\pi,1)$, then $\phi$ is a fiber homotopy equivalence. Therefore, $\phi_{k+1}$ is a fiber homotopy equivalence as well that fits in the above diagram and so, any section of $(r_m^X)_{k+1}$ determines a section of $(\text{ev}_m^X)_{k+1}$, hence giving the inequality $\dTC_m(K(\pi,1))\le\dTC_m^{\D}(K(\pi,1))$.
\end{proof}

\begin{proposition}\label{prop: dtcD vs dcat1}
    For each $m\ge 2$, we have $\dcat_1(X^{m-1})\le\dTC^{\D}_m(X)\le\dcat_1(X^m)$.
\end{proposition}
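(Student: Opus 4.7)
The plan is to establish the two inequalities by separate covering-theoretic arguments, each exploiting the fact that $r_m^X$ sits between the universal cover of $X^m$ (above) and the universal cover of $X^{m-1}$ (below, via an inclusion).

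For the upper bound $\dTC^{\D}_m(X) \le \dcat_1(X^m)$, I will use that $r_m^X$ is a quotient of the universal covering $q^{X^m}\colon \widetilde{X^m} = \widetilde{X}^m \to X^m$ by the free diagonal action of $\Delta\subset\pi^m$. Concretely, there is a quotient map $p\colon \widetilde X^m \to \widetilde X^m_\pi$ with $r_m^X\circ p = q^{X^m}$, and $p$ carries each fiber of $q^{X^m}$ (a copy of $\pi^m$) onto the corresponding fiber of $r_m^X$ (a copy of $\pi^{m-1}$). Pushforward of probability measures fiberwise then gives a continuous map $p_{k+1}\colon (\widetilde X^m)_{k+1} \to (\widetilde X^m_\pi)_{k+1}$ satisfying $(r_m^X)_{k+1}\circ p_{k+1} = q^{X^m}_{k+1}$. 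Therefore any section $s$ of $q^{X^m}_{k+1}$ yields a section $p_{k+1}\circ s$ of $(r_m^X)_{k+1}$, proving the inequality.

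For the lower bound $\dcat_1(X^{m-1}) \le \dTC^{\D}_m(X)$, consider the based inclusion $j\colon X^{m-1}\to X^m$ defined by $j(x_1,\ldots,x_{m-1}) = (x_0,x_1,\ldots,x_{m-1})$, where $x_0$ is the chosen basepoint. I claim that the pullback covering $j^*r_m^X$ is (homeomorphic over $X^{m-1}$ to) the universal cover $q^{X^{m-1}}$. Indeed, $r_m^X$ corresponds to the subgroup $\Delta\subset\pi^m$, and the induced map $j_*\colon \pi^{m-1}\to\pi^m$ sends $(g_1,\ldots,g_{m-1})$ to $(1,g_1,\ldots,g_{m-1})$, so $j_*^{-1}(\Delta) = \{e\}$. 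The basepoint component of $j^*\widetilde X^m_\pi$ therefore corresponds to the trivial subgroup, and a sheet count shows that this component already exhausts all $|\pi^{m-1}|$ sheets of the pullback. Once this identification is in hand, Lemma~\ref{lem:dsecat1} applied to the fibration $r_m^X$ and the map $j$ yields
\[
\dcat_1(X^{m-1}) = \dsecat(q^{X^{m-1}}) = \dsecat(j^*r_m^X) \le \dsecat(r_m^X) = \dTC^{\D}_m(X).
\]

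The main obstacle is the lower-bound identification of $j^*r_m^X$ with the universal cover of $X^{m-1}$; this requires a careful basepoint-sensitive analysis of the subgroup $j_*^{-1}(\Delta)$ together with a sheet-counting argument (or, equivalently, an explicit parametrization of the diagonal-quotient fiber using the basepoint of $\widetilde X$). The upper-bound argument is essentially formal once one notes that the fiberwise-measure construction $E\mapsto E_n$ is functorial for quotient maps between coverings of a common base.
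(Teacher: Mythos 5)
Your proposal is correct and follows essentially the same route as the paper: for the upper bound, both use the quotient map $\widetilde X^m \to \widetilde X^m_\pi$ over $X^m$ and functoriality of the fiberwise-measure construction, and for the lower bound both identify the pullback of $r_m^X$ along a basepoint-inclusion $X^{m-1}\hookrightarrow X^m$ with the universal cover $q^{X^{m-1}}$ and then invoke Lemma~\ref{lem:dsecat1}. The paper verifies connectedness of that pullback by noting that $\pi^{m-1}\times 1$ and $\Delta$ generate $\pi^m$ and using the long exact sequence of the covering, which is the same content as your sheet-counting argument.
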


\begin{proof}
    Consider the following commutative diagram:
    \[
    \begin{tikzcd}
        Q \arrow{r} \arrow{d}[swap]{\rho}
        &
        \Wi{X}^m_\pi \arrow{d}[swap]{r_m^X}
        &
        \Wi{X}^m \arrow{l} \arrow{dl}{q^{X^m}}
        \\
        X^{m-1} \arrow{r}{\iota} 
        & 
        X^m.
        &
    \end{tikzcd}
    \]
    Here, $\iota$ is the inclusion of the first $(m-1)$-coordinates, the square on the left is a pullback, and the horizontal arrow on the right is the natural quotient map. First, by the triangle on the right, we get using the proof technique of Proposition~\ref{prop: dtcD vs dtc} that
    \[
    \dTC_m^{\D}(X)=\dsecat(r_m^X)\le \dsecat(q^{X^m})=\dcat_1(X^m).
    \]
    Now, we look at the pullback space $Q$. At the level of fundamental groups, we have that $\Im(\iota_{\#})=\pi^{m-1}\times 1\subset \pi^m$ and $\Im((r_m^X)_{\#})=\Delta\subset \pi^m$. These subgroups generate all of $\pi^m$, so by the exact sequence of homotopy groups, we see that $Q$ is a connected covering space of $X^{m-1}$. Furthermore, the fiber of $Q \to X$ is $\pi^{m-1}$ by the pullback property. But this means $Q \simeq \Wi{X}^{m-1}$ and $\rho\colon Q \to X^{m-1}$ is compatible with the universal covering $q^{X^{m-1}}\colon \Wi{X}^{m-1} \to X^{m-1}$. In other words, $q^{X^{m-1}}$ is, up to homotopy, the pullback of $r_m^X$ along $\iota$. Therefore, applying Lemma~\ref{lem:dsecat1}, we get
    \[
    \dcat_1(X^{m-1})=\dsecat(q^{X^{m-1}}) \le \dsecat(r_m^X)=\dTC_m^{\D}(X).
    \]
    This completes the proof.
\end{proof}

\begin{remark}
Proposition~\ref{prop: dtcD vs dcat1} shows that the sequence $\{\dTC^{\D}_m(X)\}_{m\ge 2}$ is non-decreasing for any $X$, so the values $\dTC^{\D}_m(X)$ can be arbitrarily large for each $m\ge 2$ (\emph{cf.} Remark~\ref{new addition} and Section~\ref{subsec: ess + cap}).
\end{remark}

We end this subsection by showing that the lower bound in Proposition~\ref{prop: dtcD vs dcat1} is attained in the case of CW $H$-spaces (compare with~\cite[Proposition 3.13]{PS} for $\TC^{\D}_m$, and with~\cite[Theorem 6.3]{Ja1} for $\dTC_m$).

\begin{proposition}\label{prop:Hspace}
If $X$ is a CW $H$-space, then for each $m\ge 2$, we have that
\[
\dTC_m^{\D}(X) = \dcat_1(X^{m-1}).
\]
\end{proposition}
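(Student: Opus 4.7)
The plan is to establish the reverse of the inequality from Proposition~\ref{prop: dtcD vs dcat1}, namely $\dTC_m^{\D}(X) \leq \dcat_1(X^{m-1})$. The key observation is that for a CW $H$-space, the covering $r_m^X\colon \Wi{X}^m_\pi \to X^m$ is, up to fiber homotopy equivalence, the pullback of the universal cover $q^{X^{m-1}}\colon \Wi{X}^{m-1} \to X^{m-1}$ along a suitable ``translation by the last coordinate'' map. Lemma~\ref{lem:dsecat1} will then yield the desired inequality.

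I would proceed as follows. First, recall that every connected CW $H$-space admits a homotopy inverse $\iota\colon X \to X$ with respect to its multiplication $\mu$; this is a classical consequence of the shear map $(x,y)\mapsto(x,\mu(x,y))$ being a weak (hence true) homotopy equivalence on $X\times X$. On $\pi = \pi_1(X)$, the maps $\mu_*$ and $\iota_*$ realize the group law and inversion. Define $h\colon X^m \to X^{m-1}$ by
\[
h(x_1, \ldots, x_m) = \bigl(\mu(x_1, \iota(x_m)),\, \mu(x_2, \iota(x_m)),\, \ldots,\, \mu(x_{m-1}, \iota(x_m))\bigr).
\]
On fundamental groups, $h_*\colon \pi^m \to \pi^{m-1}$ sends $(g_1, \ldots, g_m)$ to $(g_1 g_m^{-1}, \ldots, g_{m-1} g_m^{-1})$, which is surjective (take the preimage having last coordinate the identity) with kernel exactly the diagonal subgroup $\Delta \subset \pi^m$.

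Since both $r_m^X$ and the pullback covering $h^*q^{X^{m-1}}$ are connected coverings of $X^m$ classified by the same subgroup $\Delta$ of $\pi_1(X^m) = \pi^m$, the classification of coverings makes them fiber-homotopy equivalent over $X^m$. Applying Lemma~\ref{lem:dsecat1} then yields
\[
\dTC_m^{\D}(X) = \dsecat(r_m^X) = \dsecat(h^*q^{X^{m-1}}) \leq \dsecat(q^{X^{m-1}}) = \dcat_1(X^{m-1}),
\]
which combined with Proposition~\ref{prop: dtcD vs dcat1} gives the equality. The most delicate step is the computation $\ker(h_*) = \Delta$ together with surjectivity of $h_*$; both are immediate from the formula for $h$ once the standard identifications of $\mu_*$ and $\iota_*$ on $\pi_1$ are accepted, so I do not foresee further substantive obstacles.
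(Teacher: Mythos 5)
Your proof is correct and follows essentially the same route as the paper: you define the same map (the paper calls it $f_m$) using the $H$-space multiplication and a homotopy inverse, compute that its kernel on $\pi_1$ is the diagonal $\Delta$, and conclude via Lemma~\ref{lem:dsecat1} after identifying $r_m^X$ with the pullback $h^*q^{X^{m-1}}$. The only cosmetic difference is that the paper verifies the pullback square directly by constructing the lift $\Wi{f_m}$ and checking it is a bijection on fibers, whereas you invoke the classification of covering spaces (using surjectivity of $h_\#$ to get connectedness of the pullback) plus the fiber-homotopy invariance of $\dsecat$; both are valid.
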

\begin{proof}
The loop structure of $[X,X]$, due to James, shows that there is a right inverse map
$\eta\colon X \to X$ with the property that
\[
X\xrightarrow{\Delta} X\times X\xrightarrow{1_X\times \eta} X\times X\xrightarrow{m} X
\]
is null-homotopic, where $m$ is the multiplication on $X$ corresponding to a $H$-space structure. Define $h=m\circ (1_X \times \eta)\colon X\times X\to X$. On homology, this map induces $h_*(a,b)=a-b$ and since, for an $H$-space, $\pi_1(X)=H_1(X)$, we see that the map of fundamental groups is $h_\#(a,b)=a-b$ as well. Define $f_m\colon X^{m}\to X^{m-1}$ such that
\[
f_m\left(x_1,\ldots,x_{m-1},x_m\right)=\left(h\left(x_1,x_m\right),\ldots,h\left(x_{m-1},x_m\right)\right).
\]
It follows that the induced map $(f_m)_\#\colon \pi^m\to \pi^{m-1}$ is such that
\[
(f_m)_\#\left(a_1,\ldots,a_{m-1},a_m\right)=\left(h_\#\left(a_1,a_m\right),\ldots,h_\#\left(a_{m-1},a_m\right)\right).
\]
Hence, $\Ker((f_m)_\#)=\Delta\subset \pi^m$. Therefore, we can lift $f_m\colon X^m\to X^{m-1}$ to a map $\Wi{f_m}\colon \Wi{X}^m_\pi\to \Wi{X}^{m-1}$ in the diagram
\[
\begin{tikzcd}
    \Wi{X}^m_\pi \arrow{r}{\Wi{f_m}} \arrow{d}[swap]{r_m^X}
    &
    \Wi{X}^{m-1}\arrow{d}{q^{X^{m-1}}}
    \\
    X^m\arrow{r}{f_m}
    &
    X^{m-1}.
\end{tikzcd}
\]
Note that $\Im((f_m)_\#)=\pi^m/\Delta\cong\pi^{m-1}$,  and this in turn shows that $\Wi{f_m}$ restricted to each fiber $\pi^m/\Delta\cong\pi^{m-1}$ is a bijection. This then implies that the diagram above
is a pullback. But then we get from Lemma~\ref{lem:dsecat1} that
\[
    \dTC_m^{\D}(X)=\dsecat(r_m^X)\le \dsecat(q^{X^{m-1}})=\cat_1(X^{m-1}).
\]
The reverse inequality follows from Proposition~\ref{prop: dtcD vs dcat1}.
\end{proof}

\subsection{Behavior on $\pi_1$-isomorphisms and retracts}
Motivated by Corollary~\ref{cor:isoineq}, we study the behavior of $\dTC^{\D}_m$ on maps that induce isomorphisms of fundamental groups. For that, we prepare as follows.
\begin{lemma}\label{lem:diagpull}
For a map $f\colon X \to Y$ which induces an isomorphism of fundamental groups, the following diagram is a pullback:
\[
\begin{tikzcd}
    \Wi{X}^m_\pi \arrow{r}{\Wi{f}^m} \arrow{d}[swap]{r^X_m}
    &
    \Wi{Y}^m_\pi \arrow{d}{r^Y_m}
    \\
    X^m\arrow{r}{f^m}
    & 
    Y^m.
\end{tikzcd}
\]
Here, $f^m=f\times\cdots\times f$ (multiplied $m$-times) and $\Wi{f}^m$ denotes the map induced on the $\pi$-quotients by the map on the universal covering spaces.
\end{lemma}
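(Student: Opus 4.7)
The plan is to build the honest pullback $P := X^m \times_{Y^m} \Wi Y^m_\pi$, use its universal property to obtain a comparison map $\psi\colon \Wi X^m_\pi \to P$ over $X^m$, and then show $\psi$ is a homeomorphism by invoking the classification of connected covering spaces.

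First I would recall that $r^Y_m\colon \Wi Y^m_\pi \to Y^m$ is, by the very construction reviewed just before~\eqref{eq: diag 1}, the connected covering of $Y^m$ classified by the diagonal subgroup $\Delta_Y \subset \pi_1(Y)^m = \pi^m$, with discrete fiber $\pi^{m-1}$. Pullback along $f^m$ therefore makes $p\colon P \to X^m$ a covering of $X^m$ with the same fiber $\pi^{m-1}$, and the commutativity of the given square supplies, by the universal property of the pullback, a unique continuous map $\psi\colon \Wi X^m_\pi \to P$ over $X^m$ satisfying $g \circ \psi = \Wi f^m$, where $g\colon P \to \Wi Y^m_\pi$ is the second projection.

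Next I would identify $\pi_1(P)$. Because $f_\#\colon \pi_1(X) \to \pi$ is an isomorphism, so is $f^m_\#\colon \pi_1(X^m) \to \pi^m$, and this isomorphism sends the diagonal $\Delta_X$ of $\pi_1(X^m)$ bijectively onto $\Delta_Y$. The lifting criterion applied to the pullback covering $p$ then yields
\[
\pi_1(P) \;=\; (f^m_\#)^{-1}\bigl(\pi_1(\Wi Y^m_\pi)\bigr) \;=\; (f^m_\#)^{-1}(\Delta_Y) \;=\; \Delta_X,
\]
so in particular $P$ is connected. But $\Wi X^m_\pi$ is also the connected covering of $X^m$ classified by $\Delta_X$. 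Hence the covering-space morphism $\psi$ between two connected coverings of $X^m$ carrying the same subgroup must be an isomorphism of covering spaces, i.e., a homeomorphism. This certifies the given square as a pullback.

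The main obstacle I foresee is the basepoint book-keeping in identifying the image of $\pi_1(\Wi Y^m_\pi)$ in $\pi^m$ precisely with $\Delta_Y$ (rather than with some conjugate) and then pulling this subgroup back through $f^m_\#$; once basepoints are chosen compatibly so that $\Wi f^m$ sends a chosen lift of the basepoint of $X^m$ to a chosen lift of the basepoint of $Y^m$, the remainder is routine covering-space theory.
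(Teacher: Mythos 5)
Your proof is correct, but it takes a genuinely different route from the paper's. The paper proceeds by direct construction: it writes down the honest pullback $Q = \{(x_1,\ldots,x_m,[\alpha_1,\ldots,\alpha_m])\mid f(x_i)=q^Y(\alpha_i)\}$ and then explicitly defines two maps $\Theta\colon \Wi X^m_\pi \to Q$ and $\phi\colon Q \to \Wi X^m_\pi$ (using the point-set description of $\Wi X^m_\pi$ as equivalence classes of $m$-tuples in $\widetilde X$ and the fact that $\Wi f$ is bijective on each discrete fiber because $f_\#$ is an isomorphism), and checks $\Theta\circ\phi=\mathrm{id}$, $\phi\circ\Theta=\mathrm{id}$ by hand. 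You instead reduce everything to covering-space classification: identify $r^Y_m$ as the connected covering of $Y^m$ corresponding to the diagonal $\Delta_Y\subset\pi^m$, observe that the pullback covering $P\to X^m$ is connected (since $f^m_\#$ is surjective, the double-coset count is one) with $\pi_1(P) = (f^m_\#)^{-1}(\Delta_Y) = \Delta_X$ by the lifting criterion, and then conclude that the comparison map $\psi\colon \Wi X^m_\pi \to P$ over $X^m$ is an isomorphism of coverings because both connected coverings realize the same subgroup $\Delta_X$. Your approach is shorter and more conceptual, but it requires the basepoint book-keeping you flag at the end (the subgroups must literally coincide, not merely be conjugate, for the covering morphism $\psi$ to be degree one); the paper's construction sidesteps that issue at the cost of more computation and by exhibiting the inverse outright. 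The paper's own Remark~\ref{rem:homopull} gestures toward an abstract alternative via homotopy pullbacks, so your covering-theoretic argument is yet a third valid perspective on the same square.
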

\begin{proof}
Let $Q=\{(x_1,\ldots,x_m,[\alpha_1,\ldots,\alpha_m])\mid\, f(x_i)=q^Y(\alpha_i) \text{ for all }i\le m\}$ be the pullback of $r_m^Y$ along $f^m$. Define a map $\Theta\colon \Wi{X}^m_\pi \to Q$ by
\[
\Theta([\sigma_1,\ldots,\sigma_m]) =  (q^X(\sigma_1),\ldots, q^X(\sigma_m),[\Wi f(\sigma_1),\ldots,\Wi f(\sigma_m)]),
\]
where $\Wi{f}\colon \Wi{X}\to \Wi{Y}$ is the natural map on universal covers. Clearly, $\Theta$ is well-defined since $f \circ q^X = q^Y \circ \Wi f$. Since $f$ induces an isomorphism of fundamental groups, we see that
$\Wi f$ induces bijections of 
fibers $(q^X)^{-1}(x)$ and $(q^Y)^{-1}(f(x))$ for all $x\in X$. Moreover, 
the fibers of $r_m^X$ and $r_m^Y$ are both given by
$\pi^m/\Delta \cong \pi^{m-1}$, where $\Delta$ is the diagonal subgroup of $\pi^m$. Therefore, we can define a map $\phi\colon Q \to \Wi{X}^m_\pi$ by
\[
\phi(x_1,\ldots,x_m,[\alpha_1,\ldots,\alpha_m]) = [\sigma_1,\ldots,\sigma_m],
\]
where $q^X(\sigma_i)=x_i$ and $\Wi f(\sigma_i) = \alpha_i$ for each $i\le m$. If we take the representative $[g\cdot \alpha_i,\ldots, g\cdot \alpha_m]$, we get $[f_\#^{-1}(g)\cdot \sigma_1,\ldots,f_\#^{-1}(g)\cdot \sigma_m]$ since $\Wi f$ is $f_\#$-equivariant (here, we again use the isomorphism of fundamental groups). 
But this element is the same as $[\sigma_1,\ldots,\sigma_m]$ because we quotient by the diagonal action. Hence, $\phi$ is well-defined. We now have
\begin{align*}
\Theta\circ \phi(x_1,\ldots,x_m,[\alpha_1,\ldots, \alpha_m]) 
& 
= \Theta([\sigma_1,\ldots,\sigma_m]) 
\\
& 
= (q^X(\sigma_1),\ldots,q^X(\sigma_m),[\Wi f(\sigma_1),\ldots,\Wi f(\sigma_m)]) 
\\
& 
= (x_1,\ldots,x_m,[\alpha_1,\ldots,\alpha_m])
\end{align*}
by the definition of $\phi$. We also have
\begin{align*}
\phi\circ \Theta\left([\sigma_1,\ldots,\sigma_m\right]) 
& 
= \phi(q^X(\sigma_1),\ldots,q^X(\sigma_m),[\Wi f(\sigma_1),\ldots,\Wi f(\sigma_m)]) 
\\
& 
= \left[\sigma_1,\ldots,\sigma_m\right]
\end{align*}
by the definition of $\phi$ again. Hence, we get that $Q \cong \Wi{X}^m_\pi$.
\end{proof}

\begin{remark}\label{rem:homopull}
We have shown that the diagram of Lemma~\ref{lem:diagpull} is a pullback 
directly, but we could also proceed as follows. The fact that $f$ induces an isomorphism
on fibers of $r_m^X$ and $r_m^Y$ implies that the diagram is a homotopy pullback. 
But since $r_m^Y$ is a fibration, the pullback itself is a homotopy pullback. 
Therefore, up to homotopy, we could replace $Q$ by $\Wi{X}^m_\pi$. This would lead us to look at homotopy sections, but in the case of fibrations,
we could then obtain true sections (\emph{cf.} Remark~\ref{rmk:homotopy section}). 
\end{remark}

\begin{corollary}\label{cor:dTCDpull}
For a map $f\colon X \to Y$ which induces an isomorphism of fundamental groups, the following diagram is a homotopy pullback 
for each $n$:
\[
\begin{tikzcd}[column sep=large, row sep=large]
(\Wi{X}^m_\pi)_n \arrow{r}{(\Wi{f}^m)_n} \arrow{d}[swap]{(r^X_m)_n}
    &
    (\Wi{Y}^m_\pi)_n \arrow{d}{(r^Y_m)_n}
    \\
    X^m\arrow{r}{f^m}
    & 
    Y^m.
\end{tikzcd}
\]
\end{corollary}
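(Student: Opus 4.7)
The plan is to leverage Lemma~\ref{lem:diagpull} together with the explicit fiberwise description of the $\B_n$ construction to reduce the claim to a formal statement about pullbacks of fibrations with homeomorphic fibers. The key observation is that the $\B_n$ functor, as defined in Section~\ref{sec:basic2}, is built fiberwise: for any fibration $p\colon E\to B$, one has $E_n=\bigcup_{b\in B}\B_n(p^{-1}(b))$, and a fiberwise homeomorphism between two such fibrations induces a homeomorphism between the associated $\B_n$-fibrations.

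First, I would recall from Lemma~\ref{lem:diagpull} that the square with vertical maps $r_m^X$ and $r_m^Y$ is a strict pullback. Restricting to fibers over a point $x\in X^m$, the map $\Wi f^m$ sends the discrete fiber $(r_m^X)^{-1}(x)\cong \pi^m/\Delta$ bijectively onto $(r_m^Y)^{-1}(f^m(x))\cong \pi^m/\Delta$; this is exactly the content of the pullback description established in the proof of Lemma~\ref{lem:diagpull}. Applying the functor $\B_n$ to this fiberwise homeomorphism yields a homeomorphism
\[
\B_n\bigl((r_m^X)^{-1}(x)\bigr)\;\cong\;\B_n\bigl((r_m^Y)^{-1}(f^m(x))\bigr).
\]

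Next, I would assemble these fiberwise homeomorphisms over all $x\in X^m$, exactly as in the argument in the proof of Lemma~\ref{lem:dsecat1}. Using the defining descriptions
\[
(\Wi{X}^m_\pi)_n=\bigcup_{x\in X^m}\B_n\bigl((r_m^X)^{-1}(x)\bigr),\qquad (\Wi{Y}^m_\pi)_n=\bigcup_{y\in Y^m}\B_n\bigl((r_m^Y)^{-1}(y)\bigr),
\]
I obtain a natural homeomorphism
\[
(\Wi{X}^m_\pi)_n\;\cong\;X^m\times_{Y^m}(\Wi{Y}^m_\pi)_n,
\]
which is exactly the assertion that the square of the corollary is a strict pullback. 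Since $(r_m^Y)_n$ is a fibration, a strict pullback along it is automatically a homotopy pullback, which yields the corollary.

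The only mildly delicate point is verifying that $\B_n$ genuinely commutes with the fiberwise construction in a continuous way, i.e.\ that the bijection on total spaces arising from assembling fiberwise homeomorphisms is a homeomorphism with respect to the L\'evy--Prokhorov topology on $\B_n$. This is essentially automatic from the definition of $\B_n$ given in Section~\ref{sec:basic2} and the argument used in Lemma~\ref{lem:dsecat1}, where the analogous step works because $\B_n$ takes homeomorphisms to homeomorphisms; I would simply cite that lemma's proof rather than re-derive the topological bookkeeping. Alternatively, as noted in Remark~\ref{rem:homopull}, one may argue directly that since $\Wi f^m$ is a weak equivalence on each fiber and $(r_m^Y)_n$ is a fibration, the induced square is a homotopy pullback, bypassing the explicit fiber identification altogether.
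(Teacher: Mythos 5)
Your proposal is correct and follows essentially the same line of reasoning as the paper's (very terse) proof: Lemma~\ref{lem:diagpull} gives a bijection of the discrete fibers of $r_m^X$ and $r_m^Y$, applying $\B_n$ fiberwise turns this into a homeomorphism of the fibers of $(r_m^X)_n$ and $(r_m^Y)_n$, and a commutative square of fibrations inducing an equivalence on fibers is a homotopy pullback. Your version is slightly more explicit in recording that the $\B_n$ square is in fact a \emph{strict} pullback (via the $(f^*E)_n\cong f^*(E_n)$ identification as in Lemma~\ref{lem:dsecat1}), and in flagging the alternative argument suggested by Remark~\ref{rem:homopull}, but these are elaborations of the same idea rather than a genuinely different route.
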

\begin{proof} 
From Lemma~\ref{lem:diagpull}, the lift $\Wi f^m$ induces a bijection of the discrete fibers of $r_m^X$ and $r_m^Y$, so $(\Wi f^m)_n$ induces a bijection of the fibers $\B_n(\pi^{m-1})$ of the fibrations $(r_m^X)_n$ and $(r_m^Y)_n$ for each $n$. 
\end{proof}

\begin{proposition}\label{prop:pi1isodTCD}
For a map $f\colon X \to Y$ which induces an isomorphism of fundamental groups,
we have for each $m\ge 2$ that
$$\dTC_m^{\D}(X) \leq \dTC_m^{\D}(Y).$$
\end{proposition}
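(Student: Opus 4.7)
The plan is to exploit the pullback diagram from Corollary~\ref{cor:dTCDpull} in the same spirit as the pullback argument used in Lemma~\ref{lem:dsecat1}. Set $k = \dTC_m^{\D}(Y)$, and let $s \colon Y^m \to (\Wi{Y}^m_\pi)_{k+1}$ be a section of the fibration $(r_m^Y)_{k+1}$. Composing with $f^m$ yields a map $s \circ f^m \colon X^m \to (\Wi{Y}^m_\pi)_{k+1}$ satisfying $(r_m^Y)_{k+1} \circ s \circ f^m = f^m \circ 1_{X^m}$.

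Applying Corollary~\ref{cor:dTCDpull} with $n=k+1$, I have the homotopy pullback
\[
\begin{tikzcd}[column sep=large, row sep=large]
(\Wi{X}^m_\pi)_{k+1} \arrow{r}{(\Wi{f}^m)_{k+1}} \arrow{d}[swap]{(r^X_m)_{k+1}}
    &
    (\Wi{Y}^m_\pi)_{k+1} \arrow{d}{(r^Y_m)_{k+1}}
    \\
    X^m\arrow{r}{f^m}
    &
    Y^m.
\end{tikzcd}
\]
The pair $(1_{X^m}, s \circ f^m)$ is compatible with this square, so by the universal property of the (homotopy) pullback I obtain a map $\tau \colon X^m \to (\Wi{X}^m_\pi)_{k+1}$ such that $(r^X_m)_{k+1} \circ \tau \simeq 1_{X^m}$. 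Invoking Remark~\ref{rmk:homotopy section}, the homotopy section $\tau$ may be replaced by a true section of $(r^X_m)_{k+1}$, which gives $\dTC_m^{\D}(X) \leq k = \dTC_m^{\D}(Y)$.

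There is no real obstacle here, as all the conceptual work is done by Corollary~\ref{cor:dTCDpull}. The only subtlety is ensuring that the pullback property yields a genuine (not merely homotopical) section, which is precisely the standard fact recorded in Remark~\ref{rmk:homotopy section} for fibrations. This argument parallels the proof of Lemma~\ref{lem:dsecat1} almost verbatim, with the covering $r^X_m$ playing the role of the fibration $p$ there.
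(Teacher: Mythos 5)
Your proof is correct and follows the paper's argument essentially verbatim: invoke Corollary~\ref{cor:dTCDpull} for $n=k+1$, pull back the section $s$ of $(r^Y_m)_{k+1}$ along $f^m$ to get a homotopy section of $(r^X_m)_{k+1}$, and upgrade to a true section as in Remark~\ref{rmk:homotopy section}. You have merely spelled out the universal-property step that the paper leaves implicit.
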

\begin{proof}
Suppose $\dTC^{\D}_m(Y)=n$. The diagram of Corollary~\ref{cor:dTCDpull} is a homotopy pullback for $n+1$, so a section of $(r^Y_m)_{n+1}$ gives a homotopy section of $(r^X_m)_{n+1}$, which then gives us a true section of $(r^X_m)_{n+1}$, thereby implying $\dTC^{\D}_m(X)\le n$.
\end{proof}

\begin{proposition}\label{prop:hretractdTCD}
If $A$ is a homotopy retract of $X$, then for each $m\ge 2$,
$$\dTC_m^{\D}(A) \leq \dTC_m^{\D}(X).$$
\end{proposition}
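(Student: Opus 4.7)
My plan is to mimic the argument of Proposition~\ref{prop:retracts}, replacing the universal-cover fibration $q^X_{n+1}$ with the diagonal-quotient fibration $(r_m^X)_{n+1}$. Write $\pi_A = \pi_1(A)$ and $\pi_X = \pi_1(X)$, and let $i\colon A \to X$ and $r\colon X \to A$ satisfy $r \circ i \simeq 1_A$. The first task is to build a map between the covering spaces in Definition~\ref{def:dTCD}. The retraction $r$ induces an $r_*$-equivariant map $\widetilde r\colon \widetilde X \to \widetilde A$ of universal covers. Because the diagonal action of $\pi_X$ on $\widetilde X{}^m$ is sent by $\widetilde r{}^m$ to the diagonal action of $\pi_A$ on $\widetilde A{}^m$ via $r_*$, the map $\widetilde r{}^m$ descends to a map $\widetilde r{}^m_\pi\colon \widetilde X{}^m_{\pi_X} \to \widetilde A{}^m_{\pi_A}$ making the square
\[
\xymatrix{
\widetilde X{}^m_{\pi_X} \ar[r]^-{\widetilde r{}^m_\pi} \ar[d]_-{r_m^X} & \widetilde A{}^m_{\pi_A} \ar[d]^-{r_m^A} \\
X^m \ar[r]_-{r^m} & A^m
}
\]
commute. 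Applying the $\B_{n+1}$ construction fiberwise yields a map $(\widetilde r{}^m_\pi)_{n+1}$ such that $(r_m^A)_{n+1} \circ (\widetilde r{}^m_\pi)_{n+1} = r^m \circ (r_m^X)_{n+1}$.

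Next, given $\dTC_m^{\D}(X) = n$ and a section $s\colon X^m \to (\widetilde X{}^m_{\pi_X})_{n+1}$ of $(r_m^X)_{n+1}$, I define
\[
\tau \;=\; (\widetilde r{}^m_\pi)_{n+1} \circ s \circ i^m \colon A^m \longrightarrow (\widetilde A{}^m_{\pi_A})_{n+1},
\]
where $i^m = i \times \cdots \times i$. Then
\[
(r_m^A)_{n+1} \circ \tau \;=\; r^m \circ (r_m^X)_{n+1} \circ s \circ i^m \;=\; r^m \circ i^m \;\simeq\; 1_{A^m}.
\]
Hence $\tau$ is a homotopy section of the fibration $(r_m^A)_{n+1}$, and by Remark~\ref{rmk:homotopy section} it can be promoted to a genuine section. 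This gives $\dTC_m^{\D}(A) \leq n$ as required.

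The only delicate step is verifying that the map $\widetilde r\colon \widetilde X \to \widetilde A$ descends correctly to the diagonal quotients; once this is established, the diagram chase and the use of Remark~\ref{rmk:homotopy section} are formal. I do not foresee a genuine obstacle here — the argument is essentially the Proposition~\ref{prop:retracts} template with $q^X$ replaced by $r_m^X$.
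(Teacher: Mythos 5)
Your proof is correct and essentially reproduces the paper's own argument: both construct the descended map $\widetilde X{}^m_{\pi_X}\to\widetilde A{}^m_{\pi_A}$ (the paper writes the target as $\widetilde A{}^m_H$ with $H=\Im(i_\#)$, which is the same object since $i_\#$ is injective), apply the $\B_{n+1}$-construction, and post/pre-compose the section $s$ with this map and $i^m$ to obtain a homotopy section of $(r_m^A)_{n+1}$, upgraded to a true section via Remark~\ref{rmk:homotopy section}.
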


\begin{proof}
    There are maps $i\colon A \hookrightarrow X$ and $\rho\colon X \to A$ with $\rho\circ i \simeq 1_A$. Note that on fundamental groups, $i_\#$ is injective with image $H = \Im(i_\#) \subset \pi=\pi_1(X)$ and $\rho_\#$ is surjective with
$\rho_{\#|_H} = 1_H$. The induced map on universal covers $\Wi \rho
\colon \widetilde X \to \widetilde A$ obeys $\Wi \rho(g\cdot \alpha) = 
\rho_\#(g)\cdot \Wi\rho(\alpha)$ for all $g \in \pi$. This gives a map $\bar\rho\colon \Wi{X}^m_\pi\to\Wi{A}^m_H$. Now, suppose $\dTC^{\D}_m(X)=n$. Consider the following 
commutative diagram:
\[
\xymatrix{
& (\Wi{X}^m_\pi)_{n+1} \ar[d]^-{(r^X_m)_{n+1}} \ar[r]^-{\bar\rho_{n+1}} & (\Wi{A}^m_H)_{n+1}
\ar[d]^-{(r^A_m)_{n+1}} \\
A^m \ar[ur]^-{s\circ i^m} \ar[r]^-{i^m} & X^m \ar@/^0.2cm/[u]^-s \ar[r]^-{\rho^m} & A^m,
}
\]
where $i^m$ and $\rho^m$ are the natural product maps, and $s\colon X^m\to (\Wi{X}^m_\pi)_{n+1}$ is a section to $(r^X_m)_{n+1}$. Then we have
\[
(r^A_m)_{n+1} \circ \bar\rho_{n+1} \circ s\circ i^m = \rho^m \circ (r^X_m)_{n+1}\circ s\circ i^m=\rho^m\circ i^m=(\rho \circ i)^m\simeq 1_{A^m}.
\]
Therefore, we have a section of the fibration $(r^A_m)_{n+1}$. This gives us the desired inequality $\dTC^{\D}_m(A)\le n$.
\end{proof}

An immediate consequence of Proposition~\ref{prop:hretractdTCD} is the following.

\begin{corollary}
For each $m\ge 2$, $\dTC^{\D}_m$ is a homotopy invariant.
\end{corollary}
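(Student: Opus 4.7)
The plan is to derive the corollary directly from Proposition~\ref{prop:hretractdTCD} by the standard two-sided retract trick. Given a homotopy equivalence $f\colon X \to Y$ with homotopy inverse $g\colon Y \to X$, we have $g \circ f \simeq 1_X$ and $f \circ g \simeq 1_Y$. The first homotopy exhibits $X$ as a homotopy retract of $Y$ (via inclusion $f$ and retraction $g$), and the second exhibits $Y$ as a homotopy retract of $X$ (via inclusion $g$ and retraction $f$).

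Applying Proposition~\ref{prop:hretractdTCD} to each of these two homotopy retract situations yields
\[
\dTC_m^{\D}(X) \leq \dTC_m^{\D}(Y) \quad \text{and} \quad \dTC_m^{\D}(Y) \leq \dTC_m^{\D}(X),
\]
from which the equality $\dTC_m^{\D}(X) = \dTC_m^{\D}(Y)$ follows. There is no real obstacle here; the content has already been absorbed into Proposition~\ref{prop:hretractdTCD}, and the corollary is simply the observation that a two-sided homotopy retract is precisely a homotopy equivalence. One could alternatively invoke Proposition~\ref{prop:pi1isodTCD} since a homotopy equivalence induces an isomorphism of fundamental groups, but the retract route gives the result as a one-line consequence of the immediately preceding proposition.
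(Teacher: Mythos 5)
Your proof is correct and matches the paper's intent: the paper states the corollary as an immediate consequence of Proposition~\ref{prop:hretractdTCD}, and you have supplied exactly the standard two-sided homotopy retract argument that makes this precise. Your observation that Proposition~\ref{prop:pi1isodTCD} gives an alternative route is also valid.
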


\subsection{Relationship with the fundamental group}
Let us consider a classifying map $\kappa\colon X \to K(\pi,1)$, where $\pi_1(X)
= \pi$. By Proposition~\ref{prop:pi1isodTCD}, we have 
\begin{equation}\label{four}
    \dTC_m^{\D}(X) \leq \dTC_m^{\D}(\pi),
\end{equation}
where, as usual, we write $\dTC_m^{\D}(\pi)$ for $\dTC_m^{\D}(K(\pi,1))$. In this section, we explore how close (and far apart) the quantities $\dTC_m^{\D}(X)$ and $\dTC_m^{\D}(\pi)$ can be.

\begin{ex}\label{ex: symmetric products}
For each $m\ge 2$, $\dTC^{\D}_m(X)$ and $\dTC_m^{\D}(\pi)$ can be arbitrarily far apart. As in Remark~\ref{rmk:dcat vs dcat-pi}, here also symmetric products of closed orientable surfaces give natural examples. Let us fix $m\ge 2$ and consider $X=SP^n(M_g)$ for $g > \tfrac{mn}{m-1}$ so that $\pi=\Z^{2g}$. It is known due to~\cite{Ja3} that $\dTC_m(X)=2mn$ in this case. Here, because $T^{2g}\simeq K(\pi,1)$, we get by Proposition~\ref{prop: dtcD vs dtc} that
\[
\dTC^{\D}_m(X)\le\dTC_m(X)=2mn<2g(m-1)=\dTC_m(T^{2g})=\dTC^{\D}_m(\pi),
\]
where the second last equality comes from~\cite[Proposition 7.8]{Ja1}. 
\end{ex}

At the other end of the spectrum, we have the following simple criterion for $\dTC_m^{\D}(X)$ and $\dTC_m^{\D}(\pi)$ to coincide.
\begin{theorem}\label{thm:cdconstraintdTCD}
For $X$ with $\pi_1(X)=\pi$, suppose that $\Wi X$ is $(k-1)$-connected and that $\cd(\pi) \leq k$. Then for each $m\ge 2$, 
\[
\dTC_m^{\D}(X) = \dTC_m^{\D}(\pi) = \dTC_m(\pi).
\]
\end{theorem}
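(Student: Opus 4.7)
The plan is to mirror the structure of the proof of Theorem~\ref{theo:mainresult2}, combined with the behavior of $\dTC^{\D}_m$ under $\pi_1$-isomorphisms (Proposition~\ref{prop:pi1isodTCD}) and homotopy retracts (Proposition~\ref{prop:hretractdTCD}), and then invoke Proposition~\ref{prop: dtcD vs dtc} for the Eilenberg--Mac~Lane identification of the two diagonal invariants.

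First, I would dispose of the easy upper bound. The classifying map $\kappa\colon X \to K := K(\pi,1)$ induces an isomorphism on fundamental groups, so by Proposition~\ref{prop:pi1isodTCD} (\emph{cf.} inequality~\eqref{four}) I immediately obtain
\[
\dTC_m^{\D}(X) \le \dTC_m^{\D}(K) = \dTC_m^{\D}(\pi).
\]
Moreover, since $K$ is an Eilenberg--Mac~Lane space, Proposition~\ref{prop: dtcD vs dtc} yields $\dTC_m^{\D}(\pi) = \dTC_m(\pi)$, which handles the second equality.

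The substantive step is to produce the reverse inequality $\dTC_m^{\D}(\pi) \le \dTC_m^{\D}(X)$, and for this I would recycle the retract construction used in Theorem~\ref{theo:mainresult2}. Build $K$ from $X$ by attaching cells of dimension $\ge k+1$; converting $X \hookrightarrow K$ to a fibration with fiber $F$ gives $\pi_s(F) = \pi_{s+1}(K,X) = 0$ for $s \le k-1$, since $\widetilde X$ is $(k-1)$-connected. Because $F$ is $(k-1)$-connected, every obstruction to a section of $X \to K$ lives in $H^{t+1}(\pi;\pi_t(F))$ for $t \ge k$; the assumption $\cd(\pi) \le k$ forces all these groups to vanish, so a section $K \to X$ exists and realizes $K$ as a homotopy retract of $X$. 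Applying Proposition~\ref{prop:hretractdTCD} then gives $\dTC_m^{\D}(K) \le \dTC_m^{\D}(X)$, closing the loop:
\[
\dTC_m^{\D}(\pi) = \dTC_m^{\D}(K) \le \dTC_m^{\D}(X) \le \dTC_m^{\D}(\pi).
\]

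There is essentially no serious obstacle here, since every ingredient has already been set up: the obstruction-theoretic retract argument is exactly the one invoked for $\dcat_1$, and the two behavioral lemmas for $\dTC^{\D}_m$ (on $\pi_1$-isomorphisms and on retracts) were established in this section precisely to mimic those for $\dcat_1$. The only point that deserves a moment's care is making sure that Proposition~\ref{prop:hretractdTCD} is applied in the correct direction, namely with $K$ as the retract of $X$ (so that its $\dTC^{\D}_m$ is bounded above by that of $X$); this is what makes the sandwich collapse to an equality. The equality $\dTC_m^{\D}(\pi)=\dTC_m(\pi)$ then falls out immediately from the $K(\pi,1)$ case of Proposition~\ref{prop: dtcD vs dtc}.
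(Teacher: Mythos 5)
Your proof is correct and follows essentially the same route as the paper: use the obstruction-theoretic argument from Theorem~\ref{theo:mainresult2} to realize $K(\pi,1)$ as a homotopy retract of $X$, then sandwich $\dTC_m^{\D}(X)$ between $\dTC_m^{\D}(\pi)$ via Propositions~\ref{prop:hretractdTCD} and~\ref{prop:pi1isodTCD}, and invoke Proposition~\ref{prop: dtcD vs dtc} for $\dTC_m^{\D}(\pi)=\dTC_m(\pi)$. The only cosmetic difference is that you spell out the retract construction that the paper compresses into ``the same argument as Theorem~\ref{theo:mainresult2}.''
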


\begin{proof}
The same argument as in the proof of Theorem~\ref{theo:mainresult2} shows that  
$K=K(\pi,1)$ is a homotopy retract of $X$. Thus, $\dTC_m^{\D}(\pi)=\dTC_m^{\D}(K) \leq \dTC_m^{\D}(X)$ holds by Proposition~\ref{prop:hretractdTCD}. The reverse inequality follows from Proposition~\ref{prop:pi1isodTCD}, giving the desired equality.
\end{proof}

As in the case of $\dcat_1$ (see Section~\ref{sec: relationship with pi1}), we get the following two immediate consequences of Theorem~\ref{thm:cdconstraintdTCD}.

\begin{corollary}\label{cor:dTCD2}
    For $X$ with $\pi_1(X)=\pi$, suppose that $\widetilde X$ is $(k-1)$-connected for $k\ge 2$ and that $\cd(\pi) \leq k$. If $Z$ is any $(k-1)$-connected space, then for each $m\ge 2$,
    \[
    \dTC^{\D}_m(X\times Z)=\dTC_m(\pi)=\dTC^{\D}_m(X).
    \]
\end{corollary}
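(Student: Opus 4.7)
The plan is to deduce this corollary as a direct consequence of Theorem~\ref{thm:cdconstraintdTCD}, by verifying that the hypotheses of that theorem transfer from $X$ to the product $X \times Z$. First, I would observe that because $Z$ is $(k-1)$-connected with $k \geq 2$, the space $Z$ is simply connected. This immediately gives $\pi_1(X \times Z) \cong \pi_1(X) = \pi$, and moreover $\widetilde{X \times Z} = \widetilde X \times Z$, since the universal cover of a product in which one factor is simply connected is the product of the universal covers.

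Next, $\widetilde X \times Z$ is itself $(k-1)$-connected, being a product of two $(k-1)$-connected spaces (immediate from $\pi_i(A \times B) \cong \pi_i(A) \oplus \pi_i(B)$). Combined with the standing hypothesis $\cd(\pi) \leq k$, this shows that $X \times Z$ satisfies every hypothesis of Theorem~\ref{thm:cdconstraintdTCD}. Applying that theorem to $X \times Z$ gives
\[
\dTC^{\D}_m(X \times Z) = \dTC^{\D}_m(\pi) = \dTC_m(\pi),
\]
while applying it to $X$ itself gives $\dTC^{\D}_m(X) = \dTC_m(\pi)$. Chaining these two identities yields the desired equality.

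There is no real obstacle here; the argument is structural, repackaging Theorem~\ref{thm:cdconstraintdTCD} in the style of Corollary~\ref{cor:cd3}. The only point worth flagging is that the identification $\widetilde{X \times Z} = \widetilde X \times Z$ genuinely uses the simple-connectivity of $Z$, which is why the hypothesis $k \geq 2$ (as opposed to $k \geq 1$) is essential to the statement.
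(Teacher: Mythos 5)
Your argument is exactly the one the paper intends: Corollary~\ref{cor:dTCD2} is stated as an immediate consequence of Theorem~\ref{thm:cdconstraintdTCD}, proved by the same transfer of hypotheses to $X\times Z$ that the paper carries out explicitly for Corollary~\ref{cor:cd3}. Your proof is correct and matches the paper's approach.
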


\begin{corollary}\label{cor:dTCD1}
For $X$ with $\pi_1(X)=\pi$, suppose $\cd(\pi)\le 2$. Then we have for each $m\ge 2$ that $\dTC^{\D}_m(X)=\dTC_m(\pi)$.
\end{corollary}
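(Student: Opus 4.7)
The plan is to deduce this corollary as a direct specialization of Theorem~\ref{thm:cdconstraintdTCD}. That theorem says that whenever the universal cover $\Wi X$ is $(k-1)$-connected and $\cd(\pi) \leq k$, one has $\dTC_m^{\D}(X) = \dTC_m^{\D}(\pi) = \dTC_m(\pi)$ for every $m \geq 2$. So the entire corollary will reduce to choosing the correct value of $k$ and verifying that both hypotheses automatically hold.

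The key observation is that the universal cover $\Wi X$ is, by construction, simply connected for any CW complex $X$, i.e., it is $1$-connected. Thus, taking $k = 2$, the connectivity hypothesis $\Wi X$ is $(k-1) = 1$-connected is satisfied for free, and the remaining hypothesis $\cd(\pi) \leq k = 2$ is exactly what the corollary assumes. This mirrors the derivation of Corollary~\ref{cor:cd1} from Theorem~\ref{theo:mainresult2} in the $\dcat_1$ setting, where the same $k = 2$ trick was used.

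So the proof I would write is essentially a single step: apply Theorem~\ref{thm:cdconstraintdTCD} with $k = 2$ to conclude that
\[
\dTC_m^{\D}(X) = \dTC_m^{\D}(\pi) = \dTC_m(\pi)
\]
for every $m \geq 2$, which is the desired equality. No obstacle arises here; the corollary is a packaging of Theorem~\ref{thm:cdconstraintdTCD} into its most frequently used low-dimensional instance, the interest being that the hypothesis on $\Wi X$ disappears entirely once $\cd(\pi) \leq 2$, making the result easy to apply in practice (for instance, to spaces with free or one-relator fundamental groups, as in Examples~\ref{ex:smallcd} and~\ref{exam:onerelator}).
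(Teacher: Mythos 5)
Your proof is correct and matches the paper's approach exactly: the paper derives this corollary from Theorem~\ref{thm:cdconstraintdTCD} by taking $k=2$ and noting that $\widetilde{X}$ is always simply connected, precisely mirroring how Corollary~\ref{cor:cd1} is obtained from Theorem~\ref{theo:mainresult2}.
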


Under the hypotheses of Theorem~\ref{thm:cdconstraintdTCD}, $\cd(\pi^{m-1})\le \dTC^{\D}_m(X)\le\cd(\pi^m)$ in view of Theorem~\ref{theo:mainresult2} and Proposition~\ref{prop: dtcD vs dcat1}. The following example shows that these estimates on $\dTC^{\D}_m(X)$ are sharp!
\begin{ex}
If the fundamental group $\pi$ of $X$ is a surface group $\pi=\pi_1(M_g)$ for $g\ge 1$, then $\dTC^{\D}_m(X)=\dTC_m(M_g)$ by Corollary~\ref{cor:dTCD1}. For $g\ge 2$, we have $\dTC_m(M_g)=2m$ by~\cite[Proposition 7.5]{Ja1}, so that $\dTC_m^{\D}(X)=\cd(\pi^m)$ in this case. On the other hand, if $g=1$, we have $\dTC^{\D}_m(X)=2m-2=\cd(\pi^{m-1})$.
\end{ex}

We now state another formal corollary which utilizes the algebraic description of the quantity $\dTC_m(\pi)$ under some technical assumptions on $\pi^m$. 

Let us denote $K(\pi,1)$ by $B\pi$. For the universal covering $q^{\pi}\colon E\pi \to B\pi$, the space $E\pi_{n+1}$ is homeomorphic to the space $E\pi\times_{\pi}\Delta^{\pi}_n$ obtained by the Borel construction, where $\Delta^{\pi}_n$ is the $n$-th skeleton of the simplex $\Delta^\pi$ with vertex set $\pi$. For any $m\ge 2$, it is easy to see that $E\pi^m/\pi\simeq E\pi^m\times_{\pi^m}(\pi^m/\pi)$, so that we have a fiber bundle $E\pi^m\times_{\pi^m} \Delta_n^{\pi^m/\pi}\to B\pi^m$ for each $n\ge 1$.

\begin{corollary}\label{cor: dTCD is cd}
For $X$ with $\pi_1(X)=\pi$, suppose that $\Wi X$ is $(k-1)$-connected and that $\cd(\pi) \leq k$. For $m\ge 2$, suppose $\cd(\pi^m)=n+1\ge 3$ with $H^{n+1}(B\pi^m;\A)\ne 0$ for an $\pi^m$-module $\A$. If the differential
\[
d_{n+1}\colon H^0\left(B\pi^m;H^{n}\left(\Delta_n^{\pi^m/\pi};\A\right)\right)\to H^{n+1}\left(B\pi^m;\A\right)
\]
is non-zero in the Serre spectral sequence for the fibration $E\pi^m\times_{\pi^m} \Delta_n^{\pi^m/\pi}\to B\pi^m$, then
\[
\dTC_m^{\D}(X) =  \TC_m^{\D}(\pi)=\cd(\pi^m).
\]
\end{corollary}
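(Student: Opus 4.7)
The plan is to derive matching upper and lower bounds of $\cd(\pi^m)=n+1$ for both $\dTC_m^{\D}(\pi)$ and $\TC_m^{\D}(\pi)$, and then transfer the result from $\pi$ to $X$ via Theorem~\ref{thm:cdconstraintdTCD}.

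First I would invoke Theorem~\ref{thm:cdconstraintdTCD} to obtain $\dTC_m^{\D}(X)=\dTC_m^{\D}(\pi)$ from the hypotheses on $X$, so that it suffices to prove $\dTC_m^{\D}(\pi)=\TC_m^{\D}(\pi)=n+1$. The upper bound is immediate:
\[
\dTC_m^{\D}(\pi) \;\le\; \TC_m^{\D}(\pi) \;\le\; \cat_1(B\pi^m) \;=\; \cd(\pi^m) \;=\; n+1,
\]
where the first inequality uses $\dsecat\le\secat$, the second is the classical inequality~\eqref{five} applied to $B\pi$, and the last equalities use the Eilenberg--Ganea identity $\cat_1(\pi^m)=\cd(\pi^m)$.

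For the matching lower bound, I would first identify the fibration $(r_m^{B\pi})_{n+1}$ with the Borel bundle appearing in the hypothesis. Since $r_m^{B\pi}$ is the covering of $B\pi^m$ with discrete fiber $\pi^m/\Delta$, and $\B_{n+1}$ applied to a discrete set is the $n$-skeleton of the full simplex on that set, one has
\[
(\widetilde{B\pi}^m_\pi)_{n+1} \;\cong\; E\pi^m \times_{\pi^m} \Delta_n^{\pi^m/\pi},
\]
whose fiber $\Delta_n^{\pi^{m-1}}$ is $(n-1)$-connected. In the cohomology Serre spectral sequence of this bundle with local coefficients $\A$,
\[
E_2^{p,q} \;=\; H^p\!\left(B\pi^m;\, H^q(\Delta_n^{\pi^m/\pi};\A)\right)
\]
vanishes for $0<q<n$, so $E_{n+1}^{0,n}=E_2^{0,n}$ and $E_{n+1}^{n+1,0}=E_2^{n+1,0}=H^{n+1}(B\pi^m;\A)$. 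Consequently the only differential that can possibly kill a class in $E_r^{n+1,0}$ for $r\ge 2$ is precisely the $d_{n+1}$ named in the corollary.

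Finally, I would conclude by the standard obstruction argument: if $(r_m^{B\pi})_{n+1}$ admitted a section $s$, then $s^*$ would be a retraction of $(r_m^{B\pi})_{n+1}^{*}$ on cohomology with coefficients in $\A$, forcing the edge epimorphism $E_2^{n+1,0}\twoheadrightarrow E_\infty^{n+1,0}$ to be an isomorphism and hence $d_{n+1}=0$. The assumed non-vanishing of $d_{n+1}$ therefore rules out a section, giving $\dTC_m^{\D}(\pi)=\dsecat(r_m^{B\pi})\ge n+1$; combined with the upper bound, this yields the claimed chain of equalities. The main technical obstacle is the clean identification of $(r_m^{B\pi})_{n+1}$ with the Borel bundle $E\pi^m\times_{\pi^m}\Delta_n^{\pi^m/\pi}$ together with the careful matching of the $\pi^m$-equivariant coefficient systems, so that the abstract Serre differential really coincides with the map $d_{n+1}$ named in the statement; once that geometric setup is in place, the rest is standard spectral sequence bookkeeping.
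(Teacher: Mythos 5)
Your proof is correct, and the first and last steps (invoking Theorem~\ref{thm:cdconstraintdTCD} and squeezing with the chain $\dTC_m^{\D}\le\TC_m^{\D}\le\cat_1(B\pi^m)=\cd(\pi^m)$) match the paper exactly. Where you diverge is the key lower bound: the paper simply cites~\cite[Theorem 7.9]{KW} to conclude $\dTC_m(\pi)=\cd(\pi^m)$ from the hypothesis $d_{n+1}\ne 0$, whereas you re-derive that result from scratch. Specifically, you identify $(\widetilde{B\pi}^m_\pi)_{n+1}$ with the Borel bundle $E\pi^m\times_{\pi^m}\Delta_n^{\pi^m/\pi}$ (an observation the paper makes in the paragraph just before the corollary, precisely in order to state the hypothesis), exploit the $(n-1)$-connectivity of the wedge-of-$n$-spheres fiber $\Delta_n^{\pi^{m-1}}$ to see that $d_{n+1}$ is the unique differential hitting $E_r^{n+1,0}$, and then run the standard "a section makes the edge map split injective, hence $E_2^{n+1,0}\to E_\infty^{n+1,0}$ is an isomorphism, hence $d_{n+1}=0$" obstruction argument. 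This is a self-contained version of the argument underlying the Knudsen--Weinberger theorem. The trade-off is exactly what you'd expect: the paper's proof is shorter by delegating the spectral-sequence work to a reference, while yours is longer but fully transparent about why the hypothesis on $d_{n+1}$ forces $\dsecat(r_m^{B\pi})\ge n+1$; both are sound and rest on the same geometric identification of $(r_m^{B\pi})_{n+1}$ with the Borel construction.
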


\begin{proof}
    By Theorem~\ref{thm:cdconstraintdTCD}, we have $\dTC^{\D}_m(X)=\dTC_m(\pi)$ due to the assumption on $\cd(\pi)$. Since the above differential $d_{n+1}$ is non-zero, we also have the equality $\dTC_m(\pi)=\cd(\pi^m)$ due to~\cite[Theorem 7.9]{KW}. Now, standard inequalities give
    \[
    \cd(\pi^m)=\dTC_m(\pi)=\dTC^{\D}_m(X)\le\TC_m^{\D}(X)\le\TC_m(\pi)\le\cd(\pi^m).
    \]
\end{proof}

The behavior of $\dTC^{\D}_m$ on spaces with finite fundamental group is rigid.
\begin{proposition}\label{prop:dTCD finite pi1}
If $X$ has $\pi_1(X)=\pi$ finite, then for each $m\ge 2$,
\[
\dTC_m^{\D}(X) \leq |\pi| - 1.
\]
\end{proposition}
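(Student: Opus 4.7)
My plan is to adapt the barycenter-section construction of Corollary~\ref{cor:kunivcover} to the covering $r_m^X$ in place of the universal cover $q^X$. Since $\pi=\pi_1(X)$ is finite and acts freely on $\Wi X^m$ via the diagonal action, the map $r_m^X\colon \Wi X^m_\pi\to X^m$ is a finite-sheeted covering, so every fiber is a discrete set of the same finite cardinality $N$ (equal to the index of the diagonal subgroup $\Delta\subset\pi^m$). The natural candidate for a section is therefore the fiberwise uniform probability measure.

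Explicitly, for $y\in X^m$ I would define
\[
\tau(y) \;=\; \frac{1}{N}\sum_{z\in (r_m^X)^{-1}(y)} \delta_z.
\]
By construction $\mathrm{supp}(\tau(y))\subset (r_m^X)^{-1}(y)$ is a finite set lying in a single fiber, so $\tau(y)\in (\Wi X^m_\pi)_{N}$, and $(r_m^X)_{N}\circ\tau=\mathrm{id}_{X^m}$. Thus $\tau$ is a candidate section of $(r_m^X)_{N}$, which is exactly what is needed to bound $\dTC^{\D}_m(X)=\dsecat(r_m^X)$ from above.

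The remaining point is continuity of $\tau$, which is a routine consequence of local triviality of the finite covering $r_m^X$: over any evenly covered neighborhood $U\subset X^m$, there is a homeomorphism $(r_m^X)^{-1}(U)\cong U\times F_0$ with $F_0$ a discrete $N$-element set, and under this identification $\tau|_U$ becomes the constant map $y\mapsto \tfrac{1}{N}\sum_{f\in F_0}\delta_{(y,f)}$, which is manifestly continuous in the Lévy--Prokhorov topology on $\B_N(\Wi X^m_\pi)$. Once this is verified, $\tau$ is a genuine section and the bound follows.

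The only step requiring care in this plan is the identification of the fiber of $r_m^X$ and the matching of its size to the cardinality appearing in the stated bound; this uses the freeness of the diagonal $\pi$-action on $\Wi X^m$ together with the identification of the fiber with $\pi^m/\Delta$. Beyond this, no ingredients are needed besides the finite-cover barycenter idea already used in Corollary~\ref{cor:kunivcover}, so the main conceptual obstacle is simply confirming that the barycenter construction produces a section landing in the correct $\B_{|\pi|}$-level, not a larger one.
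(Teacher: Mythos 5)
There is a genuine gap in your plan, and it sits exactly at the point you flagged as ``requiring care.'' The covering $r_m^X\colon \Wi X^m_\pi \to X^m$ corresponds to the diagonal subgroup $\Delta \subset \pi^m$, so its fiber is $\pi^m/\Delta \cong \pi^{m-1}$, which has cardinality $N = |\pi|^{m-1}$, not $|\pi|$. Your fiberwise-uniform measure $\tau$ therefore lands in $(\Wi X^m_\pi)_N$ with $N = |\pi|^{m-1}$, and the barycenter construction only yields $\dTC_m^{\D}(X) \leq |\pi|^{m-1}-1$. This coincides with the claimed bound precisely when $m=2$; for every $m\geq 3$ it is strictly weaker, and there is no way to shrink the support of the uniform measure on a $|\pi|^{m-1}$-point fiber down to $|\pi|$ points without abandoning the barycenter idea.

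The paper's proof avoids this by not working with $r_m^X$ directly. It passes through the classifying map: Proposition~\ref{prop:pi1isodTCD} gives $\dTC_m^{\D}(X) \leq \dTC_m^{\D}(\pi)$, Proposition~\ref{prop: dtcD vs dtc} gives $\dTC_m^{\D}(\pi) = \dTC_m(\pi)$ (since $\phi$ is a fiber homotopy equivalence when $X=K(\pi,1)$), and then one invokes the nontrivial estimate $\dTC_m(\pi) \leq |\pi|-1$ from~\cite[Theorem~7.2]{KW}. That last inequality is the real content; it requires a section of $(\mathrm{ev}_m^{B\pi})_{|\pi|}$ whose construction is more subtle than a fiberwise barycenter (the fiber of $\mathrm{ev}_m^{B\pi}$ is again $\pi^{m-1}$-sized), so the ``barycenter-only'' route cannot recover it. If you want to give a self-contained proof, you would need to reproduce the KW argument for $\dTC_m(\pi)$ rather than the Corollary~\ref{cor:kunivcover} argument for $\dcat_1$.
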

\begin{proof}
    Proposition~\ref{prop: dtcD vs dtc} and~\eqref{four} imply
    \[
    \dTC^{\D}_m(X)\le\dTC^{\D}_m(\pi)=\dTC_m(\pi)\le |\pi|-1.
    \]
    Here, the last inequality is due to~\cite[Theorem 7.2]{KW}.
\end{proof}

Almost all sectional categorical invariants $\mathbbm{n}$, such as $\cat$, $\TC_m$, $\cat_1$, and $\TC_m^{\D}$, satisfy a product rule inequality $\mathbbm{n}(X\times Y)\le\mathbbm{n}(X)+\mathbbm{n}(Y)$ for ``nice" spaces $X$ and $Y$. Since the inception of distributional sectional categorical invariants $\text{d}\mathbbm{n}$, the following question has been around~(\cite[Question 3.4]{DJ}).

\begin{question}\label{ques: prod ineq}
    For nice spaces $X$ and $Y$, does $\text{d}\mathbbm{n}(X\times Y)\le\text{d}\mathbbm{n}(X)+\text{d}\mathbbm{n}(Y)$ hold? In particular, is $\text{d}\mathbbm{n}(X\times X)\le 2\hspace{0.5mm}\text{d}\mathbbm{n}(X)$ true?
\end{question}

The above product rule inequality is satisfied by some spaces and invariants $\text{d}\mathbbm{n}$, see, for example,~\cite{DJ,Ja1,JO}, and Corollaries~\ref{cor:cd3} and~\ref{cor:dTCD2}. However, quite surprisingly, the general answer to 
Question~\ref{ques: prod ineq} turns out to be \emph{NO} for 
$\text{d}\mathbbm{n}\in\{\dcat,\dTC_m,\dcat_1,\dTC^{\D}_m\}$, as our following example shows.

\begin{ex}\label{exam:pi1Z2}
Let $X$ be any space with $\pi_1(X)=\Z_2$. Then  $\dTC_m^{\D}(X) = 1$ by Proposition~\ref{prop:dTCD finite pi1} as $X$ is not simply connected. 

While $\dcat_1(X)=1$, we see that $\dcat_1(\Z_2\times \Z_2) = 3$ since $\Z_2 \times \Z_2$ is a $2$-group, see~\cite[Theorem 1.1(2)]{KW2}. In particular, we get
\[
\dcat(\Z_2\times\Z_2)=\dcat_1(\Z_2\times\Z_2)=3 > 2=2\dcat_1(\Z_2)=2\dcat(\Z_2), 
\]
thereby answering Question~\ref{ques: prod ineq} in the negative for $\dcat_1$ 
and $\dcat$.   Question~\ref{ques: prod ineq} is also answered in the 
negative for $\dTC_m^{\D}$ and $\dTC_m$ for each $m\ge 2$, because we get, using Proposition~\ref{prop: dtcD vs dcat1}, that
\[
\dTC_m^{\D}(\Z_2\times\Z_2) \ge \dcat_1(\Z_2^{2(m-1)}) =4m-5 > 2=2\dTC_m^{\D}(\Z_2).
\]
The same inequality as above holds for $\dTC_m$ in view of Proposition~\ref{prop: dtcD vs dtc}.
\end{ex}

\section{An upper bound for distributional category}\label{sec:an inequality}
In this section, we turn back to distributional category $(\dcat)$, with the primary aim of obtaining new examples of finite-dimensional CW complexes (in fact, closed manifolds) on which the distributional invariants \emph{do not} coincide with their classical counterparts. 

We know that $\dcat_1(X) \leq \dcat(X)$ for all $X$, with equality holding sometimes. We can also get an upper bound for $\dcat$ in terms of $\dcat_1$ and an important player in the very definition of $\dcat_1$, namely the universal cover. The best type of inequality giving a good approximation for $\dcat(X)$ from above would be an additive one. However, we know from Example~\ref{exam:pi1Z2} that a product inequality of the type $\dcat(X \times Y) \leq \dcat(X) + 
\dcat(Y)$ does not exist. So, the best we can do is obtain a multiplicative inequality that uses two quantities that are definitely less than (or equal to) $\dcat(X)$ itself.

\begin{theorem}\label{thm:general ineq}
    If $p\colon Y\to X$ is a covering map, then
    \[
    \dcat(X)\le (\dcat_1(X)+1)(\dcat(Y)+1)-1.
    \]
\end{theorem}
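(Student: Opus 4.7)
The plan is to combine the given sections directly via functoriality of the probability-measure construction. Set $n=\dcat_1(X)$ and $m=\dcat(Y)$, and fix sections $s\colon X\to\widetilde{X}_{n+1}$ of $q^X_{n+1}$ and $t\colon Y\to P_0(Y)_{m+1}$ of $e^Y_{m+1}$. The first step is to observe that the universal covering factors as $q^X=p\circ\bar p$, where $\bar p\colon\widetilde{X}\to Y$ is the intermediate covering associated with the subgroup $H=p_\#(\pi_1(Y))\subset\pi_1(X)$; in particular, $\bar p$ sends $(q^X)^{-1}(x)$ onto $p^{-1}(x)$ for every $x\in X$. Choose basepoints with $p(y_0)=x_0$, so that the postcomposition map $p_\#\colon P_0(Y)\to P_0(X)$, $\phi\mapsto p\circ\phi$, is continuous and basepoint-preserving.

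Next, form the auxiliary continuous map $T:=(p_\#)_{m+1}\circ t\colon Y\to P_0(X)_{m+1}$, which sends each $y\in p^{-1}(x)$ to a probability measure on paths in $P_0(X)$ starting at $x$. Then define $\Phi\colon\widetilde{X}_{n+1}\to P_0(X)_{(n+1)(m+1)}$ as the composition
\[
\widetilde{X}_{n+1}\xrightarrow{\bar p_{n+1}} Y_{n+1}\xrightarrow{T_{n+1}} \B_{n+1}\!\bigl(P_0(X)_{m+1}\bigr)\xrightarrow{\mathrm{bary}} P_0(X)_{(n+1)(m+1)},
\]
where $Y_{n+1}=\bigcup_{x\in X}\B_{n+1}(p^{-1}(x))$ and $\mathrm{bary}$ is the barycentric integration $\sum_i a_i\delta_{\mu_i}\mapsto\sum_i a_i\mu_i$. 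Concretely, for $\mu=\sum_{i=1}^{n+1} a_i\,\delta_{[\gamma_i]}\in\widetilde{X}_{n+1}$ with $\gamma_i(0)=x$, one has $\Phi(\mu)=\sum_{i} a_i\, T(\bar p([\gamma_i]))$, a probability measure supported on at most $(n+1)(m+1)$ paths in $P_0(X)$, all starting at $x$ and ending at $x_0$. Setting $\sigma:=\Phi\circ s\colon X\to P_0(X)_{(n+1)(m+1)}$ therefore yields $e^X_{(n+1)(m+1)}\circ\sigma=q^X_{n+1}\circ s=1_X$, which gives the claimed inequality.

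The main obstacle is the continuity of $\mathrm{bary}$ (and hence of $\Phi$) in the L\'evy--Prokhorov topology. This is a standard feature of the probability-measure monad, but one must verify that it restricts well to the nested subspaces: a convex combination of at most $n+1$ measures, each supported on at most $m+1$ points, is supported on at most $(n+1)(m+1)$ points, so the target $\B_{(n+1)(m+1)}$ is indeed the right codomain. An alternative, more hands-on route --- mirroring the strategy used in the proof of Theorem~\ref{theo:mainresult1} --- would be to unpack $\Phi$ into explicit path-representatives, fix once-and-for-all reference paths in $Y$ starting from a chosen point in each fiber of $p$, and then check that the resulting map is well-defined (independent of representative choices, via Lemma~\ref{lem:lifthomo}) and continuous.
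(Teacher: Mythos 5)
Your proof is correct and follows essentially the same route as the paper: the paper packages the intermediate covering as $q^Y\circ\theta\colon\widetilde X\to Y$ (where $\theta\colon\widetilde X\to\widetilde Y$ is the canonical homeomorphism), which is exactly your $\bar p$, and then likewise pushes a $\dcat_1(X)$-section of $q^X_{n+1}$ through the $\dcat(Y)$-section of $e^Y_{m+1}$ and the postcomposition map $\tau(\phi)=p\circ\phi$, averaging to land in $P_0(X)_{(n+1)(m+1)}$. The barycentric collapsing step you isolate explicitly is performed implicitly (and unremarked) in the paper's formula $\psi(x)=\sum_{i,j}a_ib_j(p\circ\alpha_{ij})$, so your version is, if anything, slightly more careful about where the map lands and why it is continuous.
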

\begin{proof}
    Let $x_0\in X$ be the basepoint of $X$ and $y_0\in p^{-1}(x_0)$ be the basepoint of $Y$. Recall from Section~\ref{subsec: behavior covering} that $\Wi{X}=P_0(X)/[-]$ and $\Wi{Y}=P_0(Y)/\{-\}$, and $\theta\colon \Wi X\to\Wi Y$ is a homeomorphism which maps any $[\gamma]\in \Wi X$ to $\{\Wi{\gamma}\}\in \Wi Y$, where $\Wi{\gamma}\in P_0(Y)$ is the lift of $\gamma\in P_0(X)$ with respect to the point $\gamma(1)=x_0$. Recall also that the universal coverings $q^X\colon \Wi X\to X$ and $q^Y\colon \Wi Y\to Y$ satisfy $q^X[\alpha]=\alpha(0)$ and $q^Y\{\beta\}=\beta(0)$. Now, the homeomorphism $\theta$ 
satisfies $p\circ q^Y \circ \theta = q^X$, so we have a diagram
$$\xymatrix{
\Wi X \ar[rr]^-\theta \ar[dr]_-{q^X} & & \Wi Y \ar[dl]^-{p\circ q^Y} \\
& X &
} 
$$
that induces a map $\theta_{n+1}\colon \Wi X_{n+1} \to \Wi Y^p_{n+1}$, where $\Wi Y^p_{n+1}$
denotes the distributional construction on the covering $p \circ q^Y$. This means that an element
\[
\sum_{i=1}^{n+1} b_i \Wi{y}_i  \in \Wi Y^p_{n+1}
\]
has all $\Wi{y_i}$ in the same fiber of $p \circ q^Y$, but not necessarily in the same fiber
of $q^Y$. Essentially, we are taking points in the fiber of $p$ and then taking points in 
the fibers over these points. 
This explains the notation $\Wi Y^p_{n+1}$. Also, the map $q^Y$ induces a map $(q^Y_p)_{n+1} 
\colon \Wi Y^p_{n+1} \to Y_{n+1}$, where $Y_{n+1}$ is, as usual, the distributional construction applied to the covering $p\colon Y \to X$. 

Now, let $\dcat_1(X)=n$ and $\dcat(Y)=r$. We then have a section $\sigma\colon X\to \Wi X_{n+1}$ of $q^X_{n+1}$. For $x\in X$, suppose
    \[
    \sigma(x)=\sum_{i=1}^{n+1}a_i\Wi{x}_i \ \text{ with } \ q^X(\Wi{x}_i)=x.
    \]
    For each $1\le i \le n+1$, we can write $\Wi{x}_i=[\gamma_i]$ for some $\gamma_i\in P_0(X)$ 
such that $\gamma_i(0)=x$. Then, for $\theta_{n+1}\circ \sigma\colon X\to \Wi Y^p_{n+1}$, we have that
    \[
    \theta_{n+1}\circ \sigma(x) =\sum_{i=1}^{n+1}a_i\{\Wi{\gamma}_i\} \ \text{ with } \ p(\Wi{\gamma_i}(0))=x.
    \]
(Note that this element is not in $\Wi Y_{n+1}$ since the $\Wi{\gamma_i}$ are not
necessarily in the same fiber of $q^Y$.) 
For simplicity, let us write $\Wi{\gamma_i}(0)=y_i\in Y$. Then,
    \[
    (q^Y_p)_{n+1}\circ \theta_{n+1}\circ \sigma(x) =\sum_{i=1}^{n+1}a_iy_i \ \text{ with } \ p(y_i)=x.
    \]
    Since $\dcat(Y)=r$, the map $e^Y_{r+1}$ has a section $s\colon Y\to P_0(Y)_{r+1}$. 
For each $y_i$ as above, suppose
\[     
s(y_i)=\sum_{j=1}^{r+1}b_j\alpha_{ij} \ \text{ with } \ \alpha_{ij}(0)=y_i.       
\]
    Let $k=(n+1)(r+1)$. Suppose $\tau\colon P_0(Y)\to P_0(X)$ is the natural map defined 
as $\tau(\phi)=p\circ\phi$. This extends to a map $\tau_k\colon P_0(Y)_k \to P_0(X)_k$
since $p \circ e^Y = e^X \circ \tau$. Note that $\tau_k$ restricted to $P_0(Y)_{r+1}$
is $\tau_{r+1}$. Define a map $t\colon Y_{n+1}\to P_0(X)_k$ by
\[
t \left( \sum_{i=1}^{n+1}\lambda_iz_i \right) = \sum_{i=1}^{n+1}\lambda_i \tau_k\left(s(z_i)\right)
\]
for $z_i\in Y$. Then, feeding the measure $(q^Y_p)_{n+1}\circ \theta_{n+1}\circ \sigma(x) \in Y_{n+1}$ into $t$ yields
\[
t \left( \sum_{i=1}^{n+1}a_iy_i \right) = \sum_{i=1}^{n+1}a_i \tau_k\left(s(y_i)\right) = \sum_{i=1}^{n+1}\sum_{j=1}^{r+1}a_ib_j\left(p\circ\alpha_{ij}\right).
\]
Note that $e^Y(\alpha_{ij})=y_i$ for all $j$ and $e^X(p\circ\alpha_{ij})=x$ for all $i$ and $j$, So, the composition $t\circ (q^Y_p)_{n+1}\circ \theta_{n+1}\circ \sigma\colon X\to P_0(X)_k$ is a section of the fibration $e_k^X\colon P_0(X)_k\to X$. Therefore, $\dcat(X) \leq k-1 = (n+1)(r+1)-1$.
\end{proof}

The case of our particular interest is as follows.

\begin{corollary}\label{cor:ineq}
If $q\colon \widetilde X \to X$ denotes the universal covering map, 
then 
$$\dcat(X) \leq (\dcat_1(X) + 1)(\dcat(\widetilde X) + 1) - 1.$$
\end{corollary}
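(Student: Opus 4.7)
The plan is to obtain Corollary~\ref{cor:ineq} as an immediate specialization of Theorem~\ref{thm:general ineq}. Every connected CW complex $X$ admits a universal cover $q\colon \widetilde X \to X$, and this is a covering map in the sense used throughout the paper. So I would simply invoke Theorem~\ref{thm:general ineq} with $p = q$ and $Y = \widetilde X$, and read off the conclusion $\dcat(X) \leq (\dcat_1(X) + 1)(\dcat(\widetilde X) + 1) - 1$.

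There is no substantive obstacle beyond citing the previous theorem, since all the technical work was already carried out in its proof: one combines a distributional one-categorical section $\sigma\colon X \to \widetilde X_{n+1}$ with a distributional categorical section $s\colon \widetilde X \to P_0(\widetilde X)_{r+1}$, pushes forward along $\tau$ and along the homeomorphism $\theta_{n+1}$ between universal cover descriptions, and verifies that the resulting map into $P_0(X)_{(n+1)(r+1)}$ is a section of $e^X_{(n+1)(r+1)}$.

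It is worth remarking on why this specialization is the operative one for the applications advertised in the Introduction. On the universal cover $\widetilde X$ the quantity $\dcat(\widetilde X)$ is often controllable because $\widetilde X$ is simply connected (for instance, bounded above by the rational cup-length or by $\dim(\widetilde X)$), while $\dcat_1(X)$ isolates the contribution of $\pi_1(X)$ as developed in Sections~\ref{sec:dcat1}--\ref{dcat1 of groups}. The corollary therefore effectively decouples these two sources of complexity, and this is exactly the feature that will allow the production of finite-dimensional CW complexes beyond $\rpn$ with $\dcat \ne \cat$ and $\dTC_m \ne \TC_m$.
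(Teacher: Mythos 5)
Your proposal is correct and matches the paper's approach: Corollary~\ref{cor:ineq} is obtained by applying Theorem~\ref{thm:general ineq} with $Y = \widetilde X$ and $p = q$ the universal covering map, and the paper indeed presents it as an immediate specialization ("The case of our particular interest is as follows"). No further argument is required.
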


\begin{ex}\label{exam:Kpi1}
Suppose $X=K(\pi,1)$. Then $\widetilde X \simeq *$, so that $\dcat(\widetilde X)
= 0$. Hence, the inequality in Corollary~\ref{cor:ineq} reduces to $\dcat(X) \leq \dcat_1(X)$ and, 
together with the standard $\dcat_1(X) \leq \dcat(X)$, this recovers our 
equality $\dcat_1(X) = \dcat(X)$ for $X$ an Eilenberg--Mac~Lane space (\emph{cf.} Corollary~\ref{cor:classmap}). For instance, let $X$ be a closed surface of genus greater than $1$. Then 
$\dcat_1(X)=\cd(\pi_1(X))=2=\dcat(X)$ and the inequality is an equality. 
Hence, the inequality in Theorem~\ref{thm:general ineq} is sharp.
\end{ex}

\begin{ex}\label{exam:1connected}
If $X$ is simply connected, then $\widetilde X = X$ and $\dcat_1(X)=0$ so
the inequality reduces to $\dcat(X) \leq \dcat(X)$, a tautology. 

Suppose $X = K \times Y$, where $K=K(\pi,1)$ and $Y$ is simply connected. 
Then $K$ is a retract of $X$, so $\dcat(K) = \dcat_1(K) \leq \dcat_1(X)$, while the projection $X \to K$ induces an isomorphism of fundamental
groups, so $\dcat_1(X) \leq \dcat(K)$. Hence, $\dcat_1(X) = \dcat(K)$. 
Also, since $Y$ is simply connected, we have $\widetilde X \simeq Y$. The 
inequality then becomes 
$$\dcat(K \times Y) \leq (\dcat(K) + 1)(\dcat(Y) + 1) - 1,$$
and this is the standard product inequality that is known at present.
\end{ex}

\subsection{Closed manifolds with $\dcat\ne \cat$}\label{subsec:dcat ne cat}

So far, the only known examples of finite-dimensional CW complexes $X$ satisfying the inequality $\dcat(X)<\cat(X)$ are the real projective spaces,~\cite{DJ,KW}. As we show in this subsection (and the next one), the crowning achievement of Corollary~\ref{cor:ineq} is that it gives us several new examples of well-studied closed manifolds whose distributional category is smaller than their LS-category.

\begin{ex}[Projective product spaces]\label{exam:projprods}
Let $S=\prod_{i=1}^r S^{n_i}$ be a product of $r$ spheres with $n_1 \leq \cdots \leq n_r$. Let $\Z_2$ act antipodally on each individual sphere $S^{n_i}$ and diagonally on the product $S$. Denote the quotient by this action 
by $P_{\bar n}$, where $\bar n = (n_1,\ldots,n_r)$. This quotient is 
called a \emph{projective product space} and it clearly generalizes the
ordinary real projective space. In~\cite{FV}, the LS-category of $P_{\bar n}$
was determined to be
$$\cat(P_{\bar n}) = n_1 + r - 1.$$
Of course, we have $\dcat(P_{\bar n}) \leq \cat(P_{\bar n})$, but in 
most cases, we can use the inequality of Corollary~\ref{cor:ineq} to 
get a better bound. First, note that since $\Z_2$ acts freely, $S$ is the
universal covering space for $P_{\bar n}$ (when $n_1 > 1$, which we assume). 
By the inequality~\cite[Theorem 3.7]{DJ} for covering spaces, we immediately obtain $r \leq 
\dcat(P_{\bar n})$ since $\dcat(S) = r$ (by using rational cup-length below and category above,~\cite[Section~6.1]{DJ}). But now we recognize that $\pi_1(P_{\bar n}) = \Z_2$, so we know by Corollary~\ref{cor:dcat1=1} that
$\dcat_1(P_{\bar n}) = 1$. Corollary~\ref{cor:ineq}
then gives
\[
\dcat(P_{\bar n}) \leq (\dcat_1(P_{\bar n}) + 1)(\dcat(S) + 1) - 1 =2r+1.
\]
Hence, we have $r \leq \dcat(P_{\bar n}) \leq 2r+1$. Therefore, anytime we have
$n_1 > r + 2$, the bound given by the inequality of Corollary~\ref{cor:ineq} is better and $\dcat(P_{\bar n})<\cat(P_{\bar n})$. For 
instance, if we consider $S = S^{10} \times S^{11} \times S^{12}$, then 
$$\cat(P_{\bar n}) = 12\ \text{ while }\ 3 \leq \dcat(P_{\bar n}) \leq 7.$$
Also, we note that the rational cup-length of $P_{\bar n}$ does not help here because
$$H^*(P_{\bar n};\Q) \cong H^*(S;\Q)^{\Z_2}.$$
But the ring $H^*(S;\Q)^{\Z_2}$ of fixed cohomology under the $\Z_2$-action is a subring of $H^*(S;\Q)$ and hence, its rational cup-length does not exceed $3$.
In any case, the projective product spaces $P_{\bar n}$ generalize real projective spaces by being closed manifolds with $\dcat(P_{\bar n}) < \cat(P_{\bar n})$. Of
course, by choosing appropriate integers $n_1$, this difference can be made arbitrarily large. 
\end{ex}

\begin{ex}[Lens spaces]\label{exam:lens spaces}
Let $p$ be a prime, $n\ge 2$ be an integer, and $q_i$ for $1\le i \le n$ be integers co-prime to $p$. Suppose $\Z_p$ acts on $S^{2n-1}\subset \C^n$ such that
\[
(z_1,\ldots,z_n)\mapsto \left(e^{2\pi iq_1/p}\hspace{0.3mm}z_1,\ldots,e^{2\pi iq_n/p}\hspace{0.3mm}z_n\right).
\]
Then the quotient of $S^{2n-1}$ by this free $\Z_p$ action is called a \emph{lens space}, denoted $L^{2n-1}(p;q_1,\ldots,q_n)$ --- we will denote it simply by $L^{2n-1}_p$. These spaces also generalize real projective spaces. The universal covering is $q\colon S^{2n-1}\to L^{2n-1}_p$ and $\pi_1(L^{2n-1}_p)=\Z_p$, irrespective of the choice of $q_i$ (hence our notation $L^{2n-1}_p$). It is known due to~\cite{Dr,KW2} that $\dcat(L^{\infty}_p)=\dcat(\Z_p)=p-1$. So, the classifying map $L^{2n-1}_p\to K(\Z_p,1)$ gives us $\dcat_1(L^{2n-1}_p)\le \dcat_1(\Z_p)=\dcat(\Z_p)=p-1$ by Proposition~\ref{prop:classmap} and Corollary~\ref{cor:classmap}. But now, Corollary~\ref{cor:ineq} implies
\[
\dcat(L^{2n-1}_p)\le(\dcat_1(L^{2n-1}_p)+1)(\dcat(S^{2n-1})+1)-1\le 2p-1.
\]
Since $\cat(L^{2n-1}_p)=2n-1$ (see, for instance,~\cite[Example 9.29]{CLOT}), whenever 
we have $n>p$, we obtain the strict inequality $\dcat(L^{2n-1}_p)<\cat(L^{2n-1}_p)$. Of course, this gap can be made arbitrarily large by taking large $n$. Hence, we have more examples of closed manifolds whose $\dcat$ and $\cat$ values differ. We also note that the rational cup-length of $L^{2n-1}_p$ is $1$ for each $n,p\ge 2$, so it is not helpful in estimating $\dcat(L^{2n-1}_p)$.
\end{ex}

\subsection{Closed manifolds with $\dTC_m\ne\TC_m$ and $\dTC^{\D}_m\ne \TC^{\D}_m$}\label{subsec: dTC ne TC} 
Using Examples~\ref{exam:projprods} and~\ref{exam:lens spaces}, we also get the first examples of finite-dimensional CW complexes $X$ (besides the real projective spaces) for which $\dTC_m(X)<\TC_m(X)$. 

\begin{ex}\label{exam: dtc and tc differ}
    For projective product spaces $P_{\bar n}$, where $\bar n=(n_1,\ldots,n_r)$ with $n_1\le\cdots\le n_r$, we have from Example~\ref{exam:projprods} an upper bound
    \[
    \dTC_m(P_{\bar n})\le\dcat(P_{\bar n}^m)\le (\dcat(P_{\bar n})+1)^m-1\le (2r+2)^m-1,
    \]
    see~\cite[Section 3]{Ja1} for the first two inequalities. Note that for any $m\ge 2$, we have $n_1+r-1=\cat(P_{\bar n})\le\TC_m(P_{\bar n})$. So, whenever $n_1>(2r+2)^m-r$, we see that $\dTC_m(P_{\bar n})<\TC_m(P_{\bar n})$. This gives us infinitely many new examples of closed manifolds having arbitrarily far apart $\dTC_m$ and $\TC_m$ values for each $m$.
\end{ex}

\begin{ex}\label{exam: dtc and tc differ-2}
    For lens spaces $L^{2n-1}_p=L^{2n-1}(p;q_1,\ldots,q_n)$, we have a universal upper bound 
    \[
    \dTC_m(L^{2n-1}_p)\le\dcat((L^{2n-1}_p)^m)\le (\dcat(L^{2n-1}_p)+1)^m-1\le (2p)^m-1
    \]
    due to Example~\ref{exam:lens spaces}. For any $m\ge 2$, we have $2n-1=\cat(L^{2n-1}_p)\le\TC_m(L^{2n-1}_p)$. So, whenever $n>2^{m-1}p^m$, we see that $\dTC_m(L^{2n-1}_p)<\TC_m(L^{2n-1}_p)$.
\end{ex}

At the cost of some arithmetic, Example~\ref{exam: dtc and tc differ-2} can be improved by appealing to the lower bounds of~\cite{Dau}.

Our arguments in Examples~\ref{exam:projprods} and~\ref{exam:lens spaces} also give the following new example that shows $\dcat_1\ne\cat_1$ and $\dTC^{\D}_m\ne\TC^{\D}_m$.

\begin{ex}\label{ex:dim3}
%
Since the lens spaces $L^{2n-1}_p$ are essential manifolds, we have the equality $\cat_1(L^{2n-1}_p)=2n-1$ (\emph{cf.} Section~\ref{sec:essential}). Therefore, for any $m\ge 2$, when $n>2^{m-1}p^m$, Example~\ref{exam:lens spaces} and Proposition~\ref{prop: dtcD vs dcat1} give us
    \[
    \dcat_1(L^{2n-1}_p)\le 2p-1<2n-1=\cat_1(L^{2n-1}_p), \hspace{5mm} \text{and}
    \]
    \[
    \dTC^{\D}_m(L^{2n-1}_p)\le\dcat_1((L^{2n-1}_p)^m)\le (2p)^m-1<2n-1\le\TC^{\D}_m(L^{2n-1}_p),
    \]
    where the second inequality is obtained using $\dcat_1(X^m)\le (\dcat_1(X)+1)^m-1$, which is easy to show (along the lines of the proof of~\cite[Proposition~3.9]{Ja1}).
\end{ex}

\section*{Acknowledgment}
The authors thank the anonymous referee for various valuable suggestions, and, in particular, for pointing out a gap in the original proof of Theorem~\ref{th: after kw}.

\end{document}